\documentclass[a4paper,10pt]{article}

\usepackage[english]{babel}
\usepackage[utf8]{inputenc}

%%%%%%%%%%%%%%%%%%%%%%%% Simboli matematici, pacchetti ams %%%%%%%%%%%%%%%%%%%%%%%%%%%%%%
\usepackage{amsmath}
\usepackage{amssymb}
\usepackage{amsthm}
\usepackage{mathtools}
\usepackage{mathrsfs}
\usepackage{abraces}
\usepackage{bbm}

%%%%%%%%%%%%%%%%%%%%%%% Disegni e diagrammi commutativi (tikz) %%%%%%%%%%%%%%%%%%%%%%%%%%
\usepackage{tikz}
\usetikzlibrary{cd,patterns,positioning,decorations.markings,decorations.pathmorphing}
\usepackage{xstring}
\usepackage{ifthen}
\usepackage{pdflscape}
\usepackage{diagbox}

%%%%%%%%%%%%%%%%%%%%%% Bibliografia %%%%%%%%%%%%%%%%%%%%%%%%%%%%%%%%%%%%%%%%%%%%%%%%%%%%%%
%\usepackage{natbib}
\usepackage[autostyle]{csquotes}
\usepackage[backend=biber,style=trad-abbrv,doi=false,isbn=false,url=false]{biblatex}
\addbibresource{../../Bibliografia/bibliografia.bib}

\usepackage{titlesec}
\titleformat{\section}{\normalfont\fillast\scshape}{\thesection.}{.5em}{}
\titleformat{\subsection}[runin]{\normalfont\bfseries}{\thesubsection.}{.5em}{}[.]

\usepackage{enumitem}

\setlist[description]{%
	font={\normalfont\itshape},
	}

\newtheorem{thm}{Theorem}[subsection]
\newtheorem{lem}[thm]{Lemma}
\newtheorem{pro}[thm]{Proposition}
\newtheorem{cor}[thm]{Corollary}
\theoremstyle{definition}
\newtheorem{dfn}[thm]{Definition}
\newtheorem{rmk}[thm]{Remark}
\newtheorem{exa}[thm]{Example}

\DeclareMathOperator{\Hom}{Hom}

\DeclareMathOperator{\id}{id}
\DeclareMathOperator{\JW}{JW}

\newcommand{\cohdeg}[1]{\scriptstyle\textcolor{green}{#1}}

\newcommand{\Cdg}{\mathcal{C}_{\mathrm{dg}}}

\newcommand{\hh}{\mathfrak{h}}
\newcommand{\HH}{\mathbf{H}}
\newcommand{\ZZ}{\mathbb{Z}}
\newcommand{\roo}{\ZZ\Phi}
\newcommand{\D}{\mathscr{H}}
\newcommand{\DBS}{\D_{\mathrm{BS}}}

\newcommand{\Da}{{}^I\! \D}

\newcommand{\Gammaa}{{}^I\Gamma}

\newcommand{\Rshr}{\mathcal{L}_s}
\newcommand{\Bshr}{\mathcal{L}_t}
\newcommand{\Kb}{\mathcal{K}^{\mathrm{b}}}

\newcommand{\Homb}{\Hom^\bullet}
\newcommand{\un}{\mathbbm{1}}
\newcommand{\boe}{\mathbf{e}}

\newcommand{\uw}{\underline{w}}

\newcommand{\uu}{\underline{u}}
\newcommand{\uv}{\underline{v}}
\newcommand{\ux}{\underline{x}}

\newcommand{\uom}{\underline{\omega}}

\newcommand{\kk}{\Bbbk}
\newcommand{\us}{\underline{\mathbf{s}}}
\newcommand{\ut}{\underline{\mathbf{t}}}

\newcommand{\qn}[2]{[#1]_{#2}}
\newcommand{\qbc}[3]{\begin{bsmallmatrix}
						#1 \\ #2
						\end{bsmallmatrix}_{#3}}
\newcommand{\Wak}[1]{\Theta_{#1}}
\newcommand{\Wakr}[1]{\tilde{\Theta}_{#1}}
\newcommand{\Roudg}[1]{F_{#1}^\bullet}
\newcommand{\ud}[1]{z(#1)}

\newcommand{\Ra}{\kk}
\newcommand{\Casph}{{}^I\! C_n}
\newcommand{\Cn}{\tilde{C}_n}
\newcommand{\ropen}{\textcolor{red}{(}}
\newcommand{\rclosed}{\textcolor{red}{)}}
\newcommand{\bopen}{\textcolor{blue}{(}}
\newcommand{\bclosed}{\textcolor{blue}{)}}
\newcommand{\rcomma}{\textcolor{red}{|}}
\newcommand{\bcomma}{\textcolor{blue}{|}}

\newcommand{\rdot}{\textcolor{red}{\bullet}}
\newcommand{\bdot}{\textcolor{blue}{\bullet}}

\newcommand{\Laur}{\ZZ[v,v^{-1}]}

\newcommand{\alldJW}{\omega}
\newcommand{\cword}{S^*}
\newcommand{\bword}{\Sigma^*}

\newcommand{\Wf}{W_{\mathrm{f}}}
\newcommand{\chr}{\mathrm{char}}

\newcommand{\re}[1]{\textcolor{red}{#1}}
\newcommand{\bl}[1]{\textcolor{blue}{#1}}

\newcommand{\Tate}[1]{\langle #1 \rangle}

\newcommand{\tibe}{\tilde{\beta}}

\newcommand{\JWbox}[5]{ %%%%%%%%%%%%%%% to use inside tikzpicture 
						%%%%%%%%%%%%%%%	#1: number of strands, #2: number of suspension dots #subscript 
						%%%%%%%%%%%%%%% of JW
						%%%%%%%%%%%%%%% #4,#5: coordinates of the lower left corner.
					 \draw (#4,#5)--({#4+0.36*(#1+2*#2)},#5)--({#4+0.36*(#1+2*#2)},#5+0.6)--(#4,#5+0.6)--cycle;
%					 \draw ({-0.18*#1},-0.3)--({-0.18*#1},0.3)--({0.18*#1},0.3)--({0.18*#1},-0.3)--cycle;
					 \node at ({#4+0.18*(#1+2*#2)},#5+0.3) (JW) {$JW_{#3}$};
					} 

\newcommand{\longstrand}[2]{ %%%%%%%%%% #1: position, #2: color
					   \draw[#2] ({0.18+(#1-1)*0.36},-0.6)-- ++(0,1.8);
					  }
\newcommand{\upstrand}[4]{ %%%%% to use inside tikzpicture #1,#2: coordinate of 
						   %%%%% the lower left corner of the corresponding box
						   %%%%% #3: position, #4: color, .
					   \draw[#4] ({#1+0.18+(#3-1)*0.36},{#2+0.6})-- ++(0,0.3);
					  }
\newcommand{\downstrand}[2]{ %%%%% to use inside tikzpicture #1: position, #2: color
					   \draw[#2] ({0.18+(#1-1)*0.36},0)-- ++(0,-0.3);
					  }
\newcommand{\uplongstrand}[2]{ %%%%% to use inside tikzpicture #1: position, #2: color
					   \draw[#2] ({0.18+(#1-1)*0.36},0.6)-- ++(0,0.6);
					  }
\newcommand{\downlongstrand}[2]{ %%%%% to use inside tikzpicture #1: position, #2: color
					   \draw[#2] ({0.18+(#1-1)*0.36},0)-- ++(0,-0.6);
					  }
\newcommand{\updots}[1]{ %%%%% to use inside tikzpicture #1: position
					   \foreach \i in {0,1,2}{
	 				    \fill ({0.18+(#1-1)*0.36+\i*0.18},0.75) circle (.5pt);
	 				   }
					  }
\newcommand{\downdots}[1]{ %%%%% to use inside tikzpicture #1: position
					   \foreach \i in {0,1,2}{
	 				    \fill ({0.18+(#1-1)*0.36+\i*0.18},-0.15) circle (.5pt);
	 				   }
					  }
\newcommand{\updot}[2]{ %%%%% to use inside tikzpicture #1: position, #2: color
					   \fill[#2] ({0.18+(#1-1)*0.36},0.9)circle (1pt);
					  }
\newcommand{\downdot}[2]{ %%%%% to use inside tikzpicture #1: position, #2: color
					   \fill[#2] ({0.18+(#1-1)*0.36},-0.3)circle (1pt);
					  }
\newcommand{\doubleupdot}[2]{ %%%%% to use inside tikzpicture #1: position, #2: color
					   \fill[#2] ({0.18+(#1-1)*0.36},1.2)circle (1pt);
					  }
\newcommand{\doubledowndot}[2]{ %%%%% to use inside tikzpicture #1: position, #2: color
					   \fill[#2] ({0.18+(#1-1)*0.36},-0.6)circle (1pt);
					  }
\newcommand{\uppitchfork}[3]{ %%%%% to use inside tikzpicture #1: position, #2,#3: colors
					   \draw[#2] ({0.18+(#1-1)*0.36},0.9)-- ++(0,-0.3);					   
					   \draw[#2] ({0.18+(#1-1)*0.36},0.9)-- ++(0.18,0)-- ++(0,0.3);
					   \draw[#2] ({0.18+(#1-1)*0.36},0.9)-- ++(-0.18,0)-- ++(0,0.3);
					   \draw[#3] ({0.18+(#1-1)*0.36},1)-- ++(0,0.2);
					   \fill[#3] ({0.18+(#1-1)*0.36},1) circle (1pt);
					   
					  }
\newcommand{\upleftpitchfork}[2]{ %%%%%%%%%%%% to use inside a tikzpicture #1,#2: colors
 								 \draw[#1] (0.18,0.9)-- ++(0,0.3);
 								 \draw[#1] (0.18,0.9)-- ++(0,-0.3);
 								 \draw[#1] (0.18,0.9)-- ++(-0.36,0)-- ++(0,0.3);															 \draw[#1] ({0.18-0.36},0.9)-- ++(0,-1.5);
 								 \draw[#2] (0,1)-- ++(0,0.2);\fill[#2] (0,1) circle (1pt);
								}
\newcommand{\uprightpitchfork}[3]{ %%%%%%%%%%%% to use inside a tikzpicture #1,#2: colors, #3: position 
								   %%%%%%%%%%%% (use the rightmost position
 								 \draw[#1] ({0.18+(#3-2)*0.36},0.6)-- ++(0,0.3)
 								 -- ++(0.72,0)-- ++(0,-1.5);
 								 \draw[#1] ({0.9+(#3-2)*0.36},0.9)-- ++(0,0.3);
 								 \draw[#2] ({0.18+(#3-1)*0.36},0.6)-- ++(0,0.2);
 								 \fill[#2] ({0.18+(#3-1)*0.36},0.8) circle (1pt);
								}					  
\newcommand{\uprightdoublepitchfork}[3]{ %%%%%%%%%%%% to use inside a tikzpicture #1,#2: colors, #3: position 
								   %%%%%%%%%%%% (use the rightmost position
 								 \draw[#1] ({0.18+(#3-2)*0.36},0.6)-- ++(0,0.3)
 								 -- ++(0.72,0)-- ++(0,-1.5);
 								 \draw[#1] ({0.9+(#3-2)*0.36},0.9)-- ++(0,0.3);
 								 \draw[#1] ({0.18+(#3-2)*0.36},0.9)-- ++(0,0.3);
 								 \draw[#2] ({0.18+(#3-1)*0.36},0.6)-- ++(0,0.2);
 								 \fill[#2] ({0.18+(#3-1)*0.36},0.8) circle (1pt);
 								 \draw[#2] ({0.18+(#3-1)*0.36},1)-- ++(0,0.2);
								 \fill[#2] ({0.18+(#3-1)*0.36},1) circle (1pt);
								}					  
\newcommand{\uppernode}[2]{%%%%%%% #1: position, #2: formula
						   \node[anchor=south west, rotate=45] at ({0.18+(#1-1)*0.36},1.2) (color) {\scriptsize #2};
						  }

\renewcommand\labelenumi{(\textit{\roman{enumi}})}
\renewcommand\theenumi\labelenumi

\definecolor{green}{rgb}{0,.5,0}

\tikzset{%
	virtual/.style = {font=\scriptsize,draw=black,circle,dashed,inner sep=2pt},
	real/.style    = {circle,inner sep=2pt,font=\scriptsize},
	patch/.style   = {gray,pattern=north west lines, pattern color=gray},
	negative/.style = {line width=.25pt,double distance=1.5pt},
	positive/.style = {line width=2pt},
	braid/.style = {thick,double distance=2pt,rounded corners}
}

\tikzset{%
	patch/.pic={
			\clip (-11pt,0) rectangle (11pt,11pt);
			\draw[patch] (0,0) circle (7pt); 		
	}
}

\tikzset{%
	pics/dot/.style n args={1}{%
		code={%
			\draw (0,0) -- (0,#1);
			\fill (0,#1) circle (1.5pt); 		
			}
	}
}

\tikzset{%
	pics/arch/.style n args={2}{%
		code={%
			\draw (0,0) ..controls (0,#2) and (#1,#2).. (#1,0);		
		}
	}
}

\tikzset{%
	pics/bridge/.style n args={3}{%
		code={%
			\draw (0,0) ..controls (0,#3) and (#2,#3).. (#2,0);		
			\begin{scope}	
				\clip (0,0) ..controls (0,#3) and (#2,#3).. (#2,0);
				\draw (#1,0)--(#1,#3);
			\end{scope}
		}
	}
}

\tikzset{%
	pics/aqueduct/.style n args={4}{%
		code={%
			\draw (0,0) ..controls (0,#4) and (#3,#4).. (#3,0);		
			\begin{scope}	
				\clip (0,0) ..controls (0,#4) and (#3,#4).. (#3,0);
				\draw (#1,0)--(#1,#4);
				\draw (#2,0)--(#2,#4);
			\end{scope}
		}
	}
}

\def\d{.5cm}\def\h{1cm}\def\hdot{.4cm}

\title{\Large \textbf{Infinite dihedral Wakimoto sheaves}}
\author{Leonardo Maltoni}

\begin{document}
	\maketitle
	\begin{abstract}
		We study the extension groups between (modular) Wakimoto sheaves in type 
		$\tilde{A_1}$. Firstly we determine them completely over 
		characteristic zero fields. Secondly we describe a dg model which allows us to compute 
		these groups in the antispherical category for arbitrary coefficients.
	\end{abstract}
	\section{Introduction}
		The affine Hecke algebra is a fundamental object in the representation theory of reductive algebraic
		groups in positive characteristic.
		
		Recall that the affine Hecke algebra $\HH$ is a deformation of the group algebra of the 
		affine Weyl group $W$ attached to a reductive algebraic group $G$ over an algebraically closed 
		field, with a Borel subgroup $B$ and a maximal torus $T$.
 		The group $W$ contains the coroot lattice acting by translations
		$t_{\lambda}$ and one can find, also in $\HH$, elements
		$\theta_\lambda$ with the property that 
		$\theta_{\lambda_1}\theta_{\lambda_2}=\theta_{\lambda_1+\lambda_2}$. In other words we have a 
		commutative subalgebra of $\HH$ corresponding to the lattice in $W$. Bernstein gave a
		presentation of the affine Hecke algebra which highlights the properties of this lattice, and
		is very useful when trying to address representation theoretic questions about $\HH$.
		
		The affine Hecke algebra can be seen as the Grothendieck ring of some graded monoidal additive 
		category $\D$ which is called \emph{Hecke category}. There are actually several versions of the latter,
		which are equivalent under certain assumptions. In the modular setting two versions 
		are of particular interest: the category of equivariant parity sheaves over the affine flag variety, and 
		its diagrammatic presentation. It is then natural to ask what the higher level counterpart of the $\theta_\lambda$'s is.
		To find an appropriate answer one should consider the \emph{mixed} setting, 
		which in this case consists of the bounded homotopy category of $\D$. Here we find complexes 
		$\Wak{\lambda}$	which correspond to the $\theta_\lambda$'s, called \emph{Wakimoto sheaves}.
		Hence, in order to somehow ``lift'' the Bernstein presentation to a categorical level, one should 
		understand the subcategory that these objects form, and, to begin with, study the morphisms 
		between them.

		The homotopy category of $\D$ was already considered by Rouquier, and the Wakimoto sheaves above
		are special cases of \emph{Rouquier complexes}. 
		The latter form an interesting subcategory of
		$\Kb(\D)$ which is known to categorify (the actions of) a quotient of the braid group associated with $W$. 

		In this paper we study these morphism spaces in type $\tilde{A_1}$ with arbitrary coefficients.
		We use 
%		the extension from \cite{Mal_rou} of the diagrammatic description of the Hecke category by Elias and Williamson
%		to a dg version of the category of Rouquier complexes, and 
		the reduction of Rouquier complexes 
		from \cite{Mal_red}.

			\subsection{Diagrammatic category}
			We will use the presentation by generators and relations of the Hecke category
			introduced by Elias and Williamson \cite{EW}.
			This was initially a presentation for the category of \emph{Soergel bimodules},
			introduced in \cite{Soe} as an algebraic model of the Hecke category.
			 
			The Hecke category is obtained by Karoubian completion from 
			(the additive hull of) a certain \emph{Bott-Samelson} category, that can be thought of 
			as its monoidal skeleton.
			Hence one can give a presentation to this smaller category and then recover the whole 
			Hecke category formally. The generating objects, denoted $B_s$, correspond to simple 
			reflections, and are represented as 
			colored points (one color for each $s\in S$). The morphisms between 
			tensor products of the $B_s$'s are described by certain \emph{diagrams} inside the strip
			$\mathbb{R}\times [0,1]$. Namely, these are
			planar graphs obtained from some generating vertices, that connect the sequences of 
			colored points corresponding to the source and the targets,
%			(in the bottom and top line respectively),
			and they are identified under some relations.

			This \emph{diagrammatic Hecke category}, denoted by $\D$, 
			only depends on the Coxeter system and on its
			\emph{realization} $\mathfrak{h}$ (i.e.\ a 
			finite rank representation over the coefficient ring satisfying certain properties).
			
			Furthermore we have explicit bases for the morphism spaces, given in terms of 
			\emph{light leaves} maps. These were first introduced by Libedinsky \cite{Lib}, and then
			described diagrammatically in \cite{EW}.
		\subsection{Rouquier complexes}
			As we said, we can rephrase the problem of computing extension groups between Wakimoto sheaves 
			in terms of the homotopy category $\Kb(\D)$
			of the (diagrammatic) Hecke category. For a simple reflection $s$ the standard and 
			costandard sheaves correspond to certain complexes denoted by
			$F_s$ and $F_{s}^{-1}$, and the Wakimoto sheaves
			correspond to certain tensor products between these. 
			For a general Coxeter group $W$, the objects obtained as products 
			between the $F_{s}^{\pm 1}$ in all possible ways are called \emph{Rouquier complexes}, and
			they form a very interesting subcategory of $\Kb(\D)$.
			
			A key ingredient of our computation is a general homotopy reduction on these
			complexes, see \cite{Mal_red}. Let us recall it briefly.
			Consider a \emph{positive} Rouquier complex, of the form:
			\[
				F_{\uw}^\bullet=F_{s_1}F_{s_2}\dots F_{s_n}			
			\]
			for $s_i\in S$. As a graded object, this is the direct sum of Bott-Samelson 
			objects indexed by the $2^n$ subexpressions of $\uw=s_1s_2\dots s_n$.
			We find a summand $F_{\uw}$ which, 
			as a graded object, is a direct sum indexed over \emph{subwords} (each of which can correspond
			to several subexpressions). Then we have that the inclusion of the complex 
			$F_{\uw}$ in $\Roudg{\uw}$ is a homotopy equivalence.

			When the category $\D$ is Krull-Schmidt, any complex 
			admits a \emph{minimal subcomplex}: a homotopy equivalent summand with no
			contractible direct summand. Furthermore, one can show that this is unique up to isomorphism.
			The minimal subcomplex is very hard to find in general, but one can see this result as a first step of such a 
			reduction, available with no restriction on the coefficients.
		\subsection{Results}
			We only address the problem of extensions between Wakimoto sheaves problem in type $\tilde{A}_1$. Here the Wakimoto sheaves are 
			indexed by the integers.
			 
			First we assume that $\kk$ is a characteristic zero field.
			In this case one can actually compute the minimal subcomplexes of the Wakimoto sheaves.
			In \S\,\ref{subs_extchar0}, we use this to compute the extension groups. The result is described
			in Table \ref{tab_Homwak}, where the cell $(n,j)$ shows the space $\Hom(\Wak{-n},\un[j])$ (for positive values of the index, 
			one gets zero by the Rouquier formula).
			\begin{table}[h]
			  	\[
				   \begin{array}{|c|c|c|c|c|c|c|c|c|c|c|c}
				   \hline
				    \hbox{\diagbox{$n$}{$j$}}							 	& 0 	& 1 																	& 2 								& 3 								& 4 								& 5 								& 6 								& 7									& 8									& 9						& \dots			\\
				    \hline   																																																																																																		
				    0 							& R		&																		& 									&									&									&									&									&									&									&								&	\\
				    \hline   																																																																																
				    1								&		&																		& \kk(2)	&									&									&									&									&									&									&								&	\\
				    \hline   																																																																																																		
				    2								&		&																		& \kk(0)		&									& \kk(4)	&									&									&									&									&							&		\\
				    \hline   																																																																																																		
				    3								&		&																		& \kk(-2)	&									& \kk(2)	&									& \kk(6)  &									&									&							&		\\
				    \hline																																																																																																			
					4								&		& 																		& \kk(-4)	&									& \kk(0)		&									& \kk(4)	&									& \kk(8)	&							&		\\
					\hline																																																																																																			
				    \dots 							& \dots & \dots 																& \dots 							& \dots 							& \dots 							& \dots 							& \dots								& \dots								& \dots 							& \dots					& \dots			\\
				   \end{array}		  
				  \]
				  \caption{The morphism space $\Hom(\Wak{-n},\un[j])$.}\label{tab_Homwak}
			  \end{table}
			  
			  For a more general coefficient ring it is much harder to find the minimal subcomplex, 
			  and, if the category is not Krull-Schmidt, this is not even well defined. 
			  Nevertheless we can use the above general reduction to describe morphism spaces in the dg category
			  of complexes.
			  We consider the dg module of morphisms
			  \begin{equation}\label{eq_hombwaktoun}
			  	\Homb(\Wak{-n},\un).
			  \end{equation}
			  The problem of finding the extension groups is equivalent to computing the cohomology of this complex.
			 The dg module \eqref{eq_hombwaktoun} is free as a left dg $R$-module with a basis given by a certain version of the light leaves maps.
			 The differential is now described by matrices with entries in $R$. A crucial observation
			 is that many of these entries are just $\pm 1$ so we can use Gaussian elimination to simplify the complex
			 of morphisms.
			 In this way, in \S\,\ref{subs_weed}, we find a much simpler model for \eqref{eq_hombwaktoun}, homotopy equivalent to it.
			 It is still hard to compute its cohomology but the complexity is remarkably decreased. Furthermore this allows us to
			 compute the cohomology, with arbitrary coefficients, in the \emph{antispherical category}, considered in \cite{RW} or \cite{LibWil_anti}.

			Let ${}^I\!\Wak{n}$ denote the images
			of the Wakimoto sheaves in the antispherical category.
			The result is expressed in terms of \emph{two-color cyclotomic polynomials} $\phi_{n}$, 
			a certain data depending on the realization of the Coxeter system. For the standard Cartan matrix,
			$\phi_n$ is the exponential of the Von Mangoldt function:
			\[
				\phi_n=	\begin{cases}
							p	&\text{if $n=p^r$ for a prime $p$,}\\
							1	&\text{otherwise.}
						\end{cases}
			\]
			Let $\hat{P}(k)$ denote the set of partitions of a positive integer $k$ such that 
			each part divides the next. For $\lambda=(\lambda_1,\dots,\lambda_i)\in \hat{P}(k)$, 
			consider the cube of the following form	(the picture is for $i=3$).
			\[
				\begin{tikzcd}[row sep=tiny,column sep=large]
																												& \Ra\ar[r,"-\phi_{\lambda_2}"]\ar[ddr,"-\phi_{\lambda_3}", near start]		& \Ra \ar[ddr,"\phi_{\lambda_3}"]														&		\\
																												& \oplus												& \oplus																	&		\\
					\Ra \ar[uur,"\phi_{\lambda_1}"] \ar[r,"\phi_{\lambda_2}"] \ar[ddr,swap,"\phi_{\lambda_3}"]	& \Ra \ar[uur,crossing over,"\phi_{\lambda_1}", near start,swap]		& \Ra \ar[r,"-\phi_{\lambda_3}"]														& \Ra		\\
																												& \oplus												& \oplus																		&		\\
																												& \Ra \ar[uur,"\phi_{\lambda_1}", near start,swap] \ar[r,swap,"\phi_{\lambda_2}"]	& \Ra \ar[from=uul,crossing over,"-\phi_{\lambda_3}", near start]\ar[uur,swap,"\phi_{\lambda_1}"]	&		
				\end{tikzcd}				
			\]
			One can show that the complex $\Homb({}^I\!\Wak{-n},\un)$ reduces to many small cubes 
			as above, each with a certain shift. So we can compute the cohomology of each of them
			and then patch the contributions together.
			
			For $\lambda\in\hat{P}(k)$, let $I_{\lambda}$ be the ideal of $\kk$ generated by the 
			corresponding cyclotomic polynomials:
			\[
				I_{\lambda}=(\phi_{\lambda_1},\phi_{\lambda_2},\dots,\phi_{\lambda_{k-1}},\phi_{\lambda_k}).
			\]
			Then it is easy to see that the cohomology of a cube as above is
			$\kk/I_{\lambda}$ in the top degree and zero elsewhere.
			
			One finally obtains the following pattern.
			For a given partition $\lambda$ with $k$ parts and with $d$ distinct numbers appearing,
			the \emph{weight} $|\lambda|$ is defined as $2k-d$. 	
			Then we define the graded $\kk$ modules $H_k$ as follows: 
			\begin{align*}
				& H_1=\kk, &
				& H_k:=\bigoplus_{\lambda\in\hat{P}(k)} \kk/I_\lambda[1-|\lambda|],\quad\text{for $k>2$.}			
			\end{align*}
			Then we have (see Theorem \ref{thm_cohomologyZ} below):
			\begin{thm}
				The cohomology of $\Homb({}^I\!\Wak{-n},\un)$ is
				\[
					\bigoplus_{i=2}^{2n}H_{\lfloor i/2 \rfloor} [i-2-2n],
				\]
				where $\lfloor\cdot\rfloor$ denotes the floor function.
			\end{thm}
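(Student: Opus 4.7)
The argument follows the three-step strategy outlined in the introduction. First, $\Homb({}^I\!\Wak{-n},\un)$ is a free dg $\kk$-module with basis given by (the antispherical image of) light leaves of the appropriate shape, and most entries of its differential are $\pm 1$. Applying the weeding procedure of \S\,\ref{subs_weed}, that is, Gaussian elimination using the unit entries, produces a homotopy equivalent complex whose remaining basis vectors organise themselves into disjoint subcomplexes, each isomorphic to one of the cubes described above and indexed by a partition $\lambda=(\lambda_1,\dots,\lambda_i)\in\hat{P}(k)$ for some $k\le n$. The first task is to identify precisely which cubes appear, the Tate shift each one carries, and its homological position in the ambient complex.

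Second, for a fixed $\lambda\in\hat{P}(k)$ with $i$ parts, the associated cube is an $i$-dimensional hypercube of copies of $\kk$ whose arrows along the $j$-th axis are $\pm\phi_{\lambda_j}$. A direct computation, inducting on $i$ and using that modding out by $\phi_{\lambda_j}$ kills the intermediate cohomology thanks to the divisibility constraint $\lambda_j\mid\lambda_{j+1}$, shows that its cohomology is concentrated at the terminal vertex, where it equals $\kk/I_\lambda$. This is the easy step alluded to in the introduction.

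Third, I would patch the contributions. By the first step the total cohomology decomposes as
\[
\bigoplus_{\lambda} \kk/I_\lambda\,[\sigma(\lambda)],
\]
where the sum ranges over cubes surviving the reduction and $\sigma(\lambda)$ combines the internal Tate shift $1-|\lambda|$ with the homological position of the terminal vertex of the cube. The combinatorics of antispherical light leaves for ${}^I\!\Wak{-n}$ then arranges these so that, writing $k=\lfloor i/2\rfloor$, a partition $\lambda\in\hat{P}(k)$ contributes at total homological shift $i-2-2n$ exactly for $i\in\{2k,2k+1\}\cap\{2,\dots,2n\}$. Collecting by $k$ into the modules $H_k$ yields the stated formula.

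The main obstacle is the first step: checking that the weeded complex really splits as a direct sum of these partition-indexed cubes with no cross terms, and tracking the Tate and homological shifts precisely enough to match the indexing by $\lfloor i/2\rfloor$ in the theorem. Once this combinatorial decomposition is pinned down, both the cube cohomology computation and the final summation reduce to routine bookkeeping.
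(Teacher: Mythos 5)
Your plan correctly identifies the three stages the paper goes through — reduce $\Homb({}^I\!\Wak{-n},\un)$ to the small dg-algebra model via the weeding of \S\,\ref{subs_weed}, split the result into elementary pieces whose cohomology is computable, then re-assemble — so in broad outline you are tracking the actual argument. However, the way you flesh out each stage contains a genuine gap, and it is located precisely where you say "the main obstacle" is: the decomposition into cubes.

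The weeded complex ${}^I\!\tilde{C}_n[-2n]$ does not split directly into hypercubes indexed by distinguished partitions $\lambda\in\hat{P}(k)$. What the paper actually does is: first split ${}^I\!\tilde{C}_n=\bigoplus_{m\le n}B_m$ by total size; then perform a change of variables from the $\tilde{\beta}_k$ to $\gamma_1,\gamma_k^{\pm}$ with $d(\gamma_k^+)=\qn{k}{}\gamma_k^-$, $d(\gamma_k^-)=0$; then decompose $B_m$ into summands $B'_{(k_1,\dots,k_r)}$ (and $B''$) indexed by arbitrary \emph{compositions}, not distinguished partitions; then rescale; and only then argue that non-decreasing compositions give contractible summands, decreasing-but-not-strict ones reduce to shorter strictly decreasing ones (this is where the repeated parts of $\hat{P}(k)$ enter, via an extra shift encoded in $|\lambda|$), and strictly decreasing ones are genuine Koszul complexes. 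On top of all this, the $\gamma$-change of variables lives over $\mathbb{Q}(x,y)$ and a separate, fairly delicate argument (the matrices $A_i,B_i,C_i$) is needed to show the same reduction can be effected integrally over $\ZZ[x,y]$. None of this structure is visible in your proposal; you take the introduction's schematic "cube indexed by $\lambda\in\hat{P}(k)$" picture at face value.

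That literal reading also makes your second step wrong for partitions with repeated parts. For $\lambda=(2,2)$, the $2$-dimensional Koszul cube $(\kk\overset{\phi_2}{\to}\kk)\otimes(\kk\overset{\phi_2}{\to}\kk)$ has cohomology $\kk/(\phi_2)$ in \emph{both} the middle and the top degree — $\phi_2,\phi_2$ is of course not a regular sequence — so "concentrated at the terminal vertex" fails, and your step 3 would double-count. The correct cube attached to $(2,2)$ in the paper is a $1$-dimensional one produced by the "decreasing but not strict" reduction. Relatedly, you attribute the vanishing of intermediate cohomology to the divisibility constraint $\lambda_j\mid\lambda_{j+1}$; in fact for a strictly decreasing composition the Koszul cohomology is concentrated in top degree simply because the distinct $\phi_{\lambda_j}$ are coprime irreducibles in the UFD $\ZZ[x,y]$, irrespective of divisibility. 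What divisibility governs, via Lemma \ref{lem_gcdcyclo}, is whether the resulting quotient $\kk/I_\lambda$ is nonzero at all. So both the mechanism you invoke and the class of cubes you apply it to need to be corrected before the patching in step 3 can be carried out.
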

			We display the cohomology for the first values of $n$ in Table \ref{tab_cohZ} at page \pageref{tab_cohZ}. The entry 
			$i$ stands for $\kk/(\phi_i)$, and $(i,j)$ for $\kk/(\phi_i,\phi_j)$. Stacked entries represent
			direct sums. So, for example the entry
			\[
				\substack{6\\2,4\\2}
			\]
			means $\kk/(\phi_6)\oplus\kk/(\phi_2,\phi_4)\oplus\kk/(\phi_2)$.
			Notice that, in characteristic zero, the only part that would survive is the upper ``stair''
			of $\kk$'s.
			\begin{landscape}
			\begin{table}
%				\[
%					\begin{array}{|c|c|c|c|c|c|c|c|c|c}
%						\hline
%						\hbox{\diagbox{$j$}{$n$}}	& 0		& 1		& 2		& 3		& 4			& 5			& 6				& 7		&	\dots \\
%						0							& \kk	&		&		&		&			&			&				&		&	\\
%						1							& 		& \kk	& 		&		&			&			&				&		&	\\
%						2							&		& \kk	& 2		& 3		& 4			& 5			& 6				& 7		& 	\dots \\
%						3							&		&		& \kk	& 2		& 3			& 4			& 5\oplus (2,4)	&		&	\dots \\
%						4							&		&		& \kk	& 2		& 2\oplus 3	& 4			& 5\oplus 3		&		&	\dots \\
%						5							&		&		&		& \kk	& 2			& 3\oplus 2	& 4				&		&	\dots \\	
%						6							&		&		&		& \kk	& 2			& 3\oplus 2	& 4\oplus 2		&		&	\dots \\
%					\end{array}
%				\]
				\[
				   \begin{array}{|c|c|c|c|c|c|c|c|c|c|c|c|c|c|c|c|c|c|c|c|c|c|c|c|c|c|c}%27
				   \hline
				    \hbox{\diagbox{$n$}{$j$}}	 	& 0 	& 1 		& 2 			& 3 							& 4 					& 5 							& 6 							& 7								& 8									&  9								& 10						& 11							& 12 						& 13						& 14					& 15						& 16				& 17				&18					& 19				& 20				& 21		& 22		& 23		&  24	&\dots\\
				    \hline   																																																																																																																																							
				    0 								& \kk	&			& 				&								&						&								&								&								&									&									&							&								& 							&							&						&							&					&					&					&					&					&			&			&			&		&\dots\\
				    \hline   																																																																																																																																											
				    1								&		& \kk		& \kk			&								&						&								&								&								&									&									&							& 								& 							&							&						&							&					&					&					&					&					&			&			&			&		&\dots\\
				    \hline   																																																																																																																																								
				    2								&		&			& 2				& \kk							& \kk					&								&								&								&									&									&							&								& 							&							&						&							&					&					&					&					&					&			&			&			&		&\dots\\
				    \hline   																																																																																																																																																
				    3								&		&			& 3				& 2								& 2						& \kk							& \kk  							&								&									&									&							&								&							&							&						&							&					&					&					&					&					&			&			&			&		&\dots\\
				    \hline																																																																																																																																									
					4								&		& 			& 4				& 3								& \substack{3\\2}		& 2								& 2								& \kk							& \kk								&									&							&								&							&							&						&							&					&					&					&					&					&			&			&			&		&\dots\\
					\hline																																																																																																																																																
					5								&		& 			& 5				& 4								& 4						& \substack{3\\2}				& \substack{3\\2}				& 2								& 2									& \kk								& \kk						&								&							&							&						&							&					&					&					&					& 					&			&			&			&		&\dots\\
					\hline																																																																																																																																														
					6								&		& 			& 6				& \substack{5\\2,4}				& \substack{5\\3}		& 4								& \substack{4\\2}				& \substack{3\\2}				& \substack{3\\2}					& 2									& 2							& \kk							& \kk						&							&						&							&					&					&					&					&					&			&			&			&		&\dots\\
					\hline																																																																																																																																																	
					7								&		& 			& 7				& 6								& \substack{6\\2,4}		& \substack{5\\2,4\\3}			& \substack{5\\3}				& 4								& \substack{4\\2} 					& \substack{3\\2}					& \substack{3\\2}			& 2								& 2							& \kk						& \kk					&							&					&					&					&					&					&			&			&			&		&\dots\\
					\hline																																																																																																																																																							
					8								&		& 			& 8				& \substack{7\\2,6}				& \substack{7\\4}		& \substack{6\\2,4}				& \substack{6\\2,4}				& \substack{5\\2,4\\3}			& \substack{5\\3\\2} 				& 4									& \substack{4\\2} 			& \substack{3\\2}				& \substack{3\\2}			& 2							& 2						& \kk						& \kk				&					&					&					&					&			&			&			&		&\dots\\
					\hline																																																																																																																																																					
					9								&		& 			& 9				& \substack{8\\3,6}				& \substack{8\\2,6}		& \substack{7\\2,6\\4}			& \substack{7\\4\\2,4\\3}		& \substack{6\\2,4}				& \substack{6\\2,4}					& \substack{5\\2,4\\3\\2}			& \substack{5\\3\\2}		& 4								& \substack{4\\2}			& \substack{3\\2}			& \substack{3\\2}		& 2							& 2					& \kk				& \kk				&					&					&			&			&			&		&\dots\\
					\hline																																																																																																																																																			
					10								&		& 			& 10			& \substack{9\\2,8}				& \substack{9\\3,6\\5}	& \substack{8\\3,6\\2,6\\2,4}	& \substack{8\\2,6}				& \substack{7\\2,6\\4\\3\\2,4}	& \substack{7\\4\\2,4\\3}			& \substack{6\\2,4}					& \substack{6\\2,4\\2}		& \substack{5\\2,4\\3\\2}		& \substack{5\\3\\2}		& 4							& \substack{4\\2}		& \substack{3\\2}			& \substack{3\\2}	& 2					& 2					& \kk				& \kk				&			&			&			&		&\dots\\
					\hline																																																																																																																																													
					11								&		& 			& 11			& 10							& \substack{10\\2,8}	& \substack{9\\2,8\\5}			& \substack{9\\3,6\\5\\2,6\\2,4}& \substack{8\\3,6\\2,6\\2,4}	& \substack{8\\2,6\\2,4}			& \substack{7\\2,6\\4\\3\\2,4}		& \substack{7\\4\\2,4\\3}	& \substack{6\\2,4\\2}			& \substack{6\\2,4\\2}		& \substack{5\\2,4\\3\\2}	& \substack{5\\3\\2}	& 4							& \substack{4\\2}	& \substack{3\\2}	& \substack{3\\2}	& 2					& 2					& \kk		& \kk		&			&		&\dots\\
					\hline																																																																															
					12								&		& 			& 12			& \substack{11\\2,10\\3,9\\4,8}	& \substack{11\\6}		& \substack{10\\2,8\\3,6}		& \substack{10\\2,8\\4}			& \substack{9\\2,8\\5\\2,6\\2,4}& \substack{9\\3,6\\5\\2,6\\2,4\\3}	& \substack{8\\3,6\\2,6\\2,4\\2,4}	& \substack{8\\2,6\\2,4}	& \substack{7\\2,6\\4\\3\\2,4}	& \substack{7\\4\\2,4\\3\\2}& \substack{6\\2,4\\2}		& \substack{6\\2,4\\2}	& \substack{5\\2,4\\3\\2}	& \substack{5\\3\\2}& 4					& \substack{4\\2}	& \substack{3\\2}	& \substack{3\\2}	& 2			& 2			& \kk		& \kk	&\dots\\
					\hline																																																																															
				    \dots 							& 	 	&		 	& \dots 		& \dots 						& \dots 				& \dots 						& \dots							& \dots							& \dots								& \dots								& \dots						& \dots							& \dots 					& \dots						& \dots					& \dots						& \dots				& \dots				& \dots				&\dots				&	\dots			& \dots		& \dots		& \dots		& \dots	&\dots\\
				   \end{array}		  
				\]
				\caption{The cohomology of $\Homb({}^I\!\Wak{-n},\un)$.}\label{tab_cohZ}
			\end{table}
			\end{landscape}
	\section{Setup}
	In this section we describe the machinery of Soergel calculus and 
	Rouquier complexes that we will use. As we will only apply this in type $\tilde{A_1}$, 
	we will give full detail only for that case.
	\subsection{Notation for Coxeter systems}
	Let $(W,S)$ be a Coxeter system.
	We call \textit{Coxeter word}
	an element of the free monoid $\cword$ generated by $S$. Let $\Sigma^+=\{\sigma_s\mid s\in S\}$ be the set of generators of the corresponding braid group, and
	$\Sigma^-=\{\sigma_s^{-1}\mid s\in S\}$ and $\Sigma=\Sigma^+\sqcup \Sigma^-$.
	We call \textit{braid word} an element of the free monoid $\bword$ generated by $\Sigma$. 
	Words will be denoted by underlined letters:
	\[
		\uw= s_1s_2\cdots s_k \quad\text{or}\quad \uom=\sigma_{s_1}^{\pm 1}\cdots \sigma_{s_k}^{\pm 1}.
	\]		
	Let $\ell(\uw)=k$ denote the length of the word $\uw$. We still use
	$\ell$ for the length function in $W$ (so $\ell(w)$ is the minimal length of words for $w$).
	Let $\le$ denote the Bruhat order.
	Recall that a \textit{reflection} is a conjugate of a simple reflection.
	\begin{exa}
		Let $W$ be the affine Weyl group of type $\tilde{A_1}$.
		Here there are two simple reflections $S=\{s,t\}$, where
		$S_{\mathrm{f}}=\{s\}$ is the finite reflection and $\{t\}=S\setminus S_{\mathrm{f}}$ 
		is the affine reflection. 
		Notice that, for $n>0$, there are exactly
		two elements in $W$ with length $n$, and each of them has a unique reduced (i.e.\ of minimal length) word. Let $\sigma_n$ 
		(or $\tau_n$) be the
		only element of length $n$ whose unique reduced word starts with $s$ (resp.\ $t$).
		And let also $\us_n$ and $\ut_n$ denote their respective reduced words:
		\begin{align*}
			\us_n &=\underbrace{stst\dots}_{n \text{ letters}}\\
			\ut_n &=\underbrace{tsts\dots}_{n \text{ letters}}
		\end{align*}	
 		Notice that here the reflections are precisely the elements of odd length.
	\end{exa}			
		\subsection{Two-colored quantum numbers}
		Consider the ring $\ZZ[x,y]$ and define $[1]_x:=[1]_y:=1$ and $[2]_x:=x$ and $[2]_y:=y$. Then, recursively
		\begin{align*}
			&[n+1]_x:=[2]_x[n]_y - [n-1]_x, \\
			&[n+1]_y:=[2]_y[n]_x - [n-1]_y.
		\end{align*}
		These are the \emph{two-colored quantum numbers}.
		It is easy to see that if $n$ is odd then $[n]_x=[n]_y$, so we will sometimes omit
		the index in this case.
		\begin{rmk}\label{rmk_specquantumnumb}
			If we consider the morphism $\ZZ[x,y]\rightarrow \ZZ[v,v^{-1}]$ sending both $x$ and $y$ to 
			$v+v^{-1}$, the images of the two-colored quantum numbers are (the symmetrized version of) 
			the usual quantum numbers:
			\[
				\qn{n}{x},\qn{n}{y}\mapsto \frac{v^{n}-v^{-n}}{v-v^{-1}}=v^{-n+1}+v^{-n+3}+\dots +v^{n-3}+v^{n-1}.
			\]
			The two-colored version, as we will see, shares many properties with the usual one.
		\end{rmk}			
	\subsection{Realization of a Coxeter system}
	Let $\kk$ be a commutative ring and $\mathfrak{h}$ a 
	realization of $W$ in the sense of \cite[\S 3.1]{EW}. This consists in a free,
	finite rank $\kk$-module $\mathfrak{h}$ over which $W$ acts linearly via 
	\[
		s(v)=v-\langle \alpha_s, v\rangle \alpha_s^{\vee}, %\quad t(v)=v-\langle \alpha_t, v\rangle \alpha_t^{\vee}, \quad \forall v\in \mathfrak{h}
															\quad\forall s \in S
	\]
	for certain distinguished elements $\alpha_s^\vee \in \mathfrak{h}$ and 
	$\alpha_s \in \mathfrak{h}^*=\Hom(\mathfrak{h},k)$ (that we call respectively \textit{simple coroots} and \textit{simple roots})
	such that $\langle \alpha_s,\alpha_s^\vee\rangle =2$, for each $s\in S$,
	where $\langle -,-\rangle:\mathfrak{h}^*\times \mathfrak{h}\rightarrow \kk$ is the natural pairing.
	Then $W$ acts on $\mathfrak{h}^*$ by the contragredient representation, given by similar formulas.
%	 Let $\partial_s(\alpha_t):=\alpha_t(\alpha_s^\vee)$ and
%	$\partial_t(\alpha_s):=\alpha_s(\alpha_t^\vee)$.
	The minimal information of the realization, namely its action over simple roots and coroots, 
	is then encoded in the \textit{Cartan matrix} $(\langle \alpha_s , \alpha_t^\vee \rangle)_{s,t\in S}$.	

	We assume that the realization satisfies \textit{Demazure surjectivity} (see \cite[Ass.\ 3.9]{EW}), namely we suppose that, for each $s\in S$,
	the maps $\langle \alpha_s,- \rangle: \mathfrak{h}\rightarrow \kk$ and 
	$\langle -,\alpha_s^\vee \rangle: \mathfrak{h}^*\rightarrow \kk$ 
	are surjective. In this case we choose some $\delta_s\in\mathfrak{h}^*$ such that 
	$\langle \delta_s,\alpha_s^\vee\rangle=1$.

	Let $R=S(\mathfrak{h}^*)$, with $\mathfrak{h}^*$ in degree 2. The action of $W$ on $\mathfrak{h}^*$ extends naturally to $R$. 
	We define, for each $s\in S$, the \textit{Demazure operator} $\partial_s : R \rightarrow R$, via
	\[
					f \mapsto \frac{f-s(f)}{\alpha_s}
	\]
	We set $\qn{n}{s}$ and $\qn{n}{t}$ to be the images of $\qn{n}{x}$ and $\qn{n}{y}$ via the map
	the map $\ZZ[x,y]\rightarrow \kk$ sending $x$ to $-\partial_s(\alpha_t)$ and $y$ to $-\partial_t(\alpha_s)$.
	\begin{exa}
		In type $\tilde{A_1}$ the Cartan matrix is a $2\times 2$ matrix of the form
		\[
			\begin{pmatrix}
				2	& \langle \alpha_s , \alpha_t^\vee \rangle \\
				\langle \alpha_t , \alpha_s^\vee \rangle	& 2
			\end{pmatrix}
			=
			\begin{pmatrix}
				2	& \partial_t(\alpha_s) \\
				\partial_s(\alpha_t) 	& 2
			\end{pmatrix}
			=
			\begin{pmatrix}
				2	& -\qn{2}{t} \\
				-\qn{2}{s} & 2
			\end{pmatrix}			
		\]
		and it is determined by the two values $\qn{2}{s}=-\partial_s(\alpha_t)$ and $\qn{2}{t}=-\partial_t(\alpha_s)$.
		We call \textit{standard} Cartan matrix for $\tilde{A_1}$ the one for which $\qn{2}{s}=\qn{2}{t}=2$.
%		\[
%			\begin{pmatrix}
%				2	& -2 \\
%				-2	& 2
%			\end{pmatrix}
%		\]
		Here are three interesting realizations corresponding to it (cf.\ \cite[Ex.\ 3.3]{EW}).
		\begin{enumerate}
			\item The \textit{natural} realization coming from the root datum of $\mathrm{SL}_2$. We take 
				$\mathfrak{h}^*=\kk \otimes X$ and $\mathfrak{h}=\kk \otimes X^\vee$. Then we take $\alpha_s$ to be the only simple root
				and $\alpha_s^\vee$ to be the only simple coroot. Then we set $\alpha_t:=-\alpha_s$ and $\alpha_t^\vee:=-\alpha_s^\vee$
				(so neither $\{\alpha_s,\alpha_t\}$, nor $\{\alpha_s^\vee,\alpha_t^\vee\}$ are linearly independent).
				One can also take $\mathfrak{h}$ to be $\kk\otimes \roo^\vee$ (then a natural 
				assumption to make is that $\kk\otimes \roo \times \kk\otimes \roo^\vee\rightarrow \kk$ 
				is a perfect pairing so as to identify $\mathfrak{h}^*$ with the base-changed root lattice). 
				This is essentially the realization considered in
				\cite{RW}, up to a different choice of notation (exchanging roots and coroots). 
				Notice that the action of $W$ here factors through that of $W_{\mathrm{f}}$.
			\item The \textit{geometric representation} of $(W,S)$, where $\hh:=\mathbb{R}\alpha_s^\vee \oplus \mathbb{R}\alpha_t^\vee$ and 
				$\alpha_s,\alpha_t\in \hh^*$ are defined via the above matrix. In this case, in fact, the Cartan matrix coincides with the 
				Coxeter matrix $\big(-2\cos(\pi/m_{st})\big)$, so we can actually define this realization over any $\kk$.
				Notice that in particular $\alpha_t=-\alpha_s$.				
			\item The \textit{Kac-Moody} realization, as described in \cite{Kac}. If we consider the 
				Kac-Moody Lie algebra $\mathfrak{g}$, defined by our Cartan matrix, we can take $\hh$ 
				to be any $\ZZ$-lattice of the Cartan subalgebra of $\mathfrak{g}$, 
				containing the coroot lattice such that the dual lattice $\hh^*$ contains the root 
				lattice. Here both the simple roots and the simple coroots
				and simple roots are linearly independent.
				Over the root lattice (and similarly for the coroot lattice), we can picture this realization as follows
				\[
					\begin{tikzpicture}
						\def\al{1cm}\def\del{.4cm}
						\begin{scope}
							\clip (-2.5*\al,-5.5*\del) rectangle (2.5*\al,5.5*\del);
							\foreach \i in {7,6,5,4,3,2,1,0,-1,-2,-3,-4,-5,-6,-7}{%
								\draw[lightgray] (-3*\al,\i*\del+3*\del)--(3*\al,\i*\del-3*\del);
								\draw[lightgray] (-3*\al,\i*\del)--(3*\al,\i*\del);
								}
						\end{scope}
						\foreach \i in {-5,-4,-3,-2,-1,0,1,2,3,4,5}{%
							\foreach \j in {-2,0,2}{%
								\fill[gray] (\j*\al,\i*\del) circle (1pt);
							}
						}
							\draw[gray,latex-latex] (-.7*\al,4*\del+.7*\del)--node[above,pos=.3,black] {$t$} (.7*\al,4*\del-.7*\del);
							\draw[gray,latex-latex] (-.7*\al,4*\del)--node[above,pos=.8,black] {$s$} (.7*\al,4*\del);
						\draw (0,-6*\del)--(0,6*\del);
						\draw[red,thick,->] (0,0)--(\al,0); \node[anchor=west] at (\al,0) {$\alpha_s$}; 
						\draw[blue,thick,->] (0,0)--(-\al,\del); \node[anchor=east] at (-\al,\del) {$\alpha_t$};
						\fill (0,\del) circle (1pt); \node[anchor=south west] at (0,\del) {$\delta$};
						\foreach \i in {1,2,-1,-2}{%
							\fill[red] (\al,2*\i*\del) circle (1pt);
							\fill[red] (-\al,2*\i*\del) circle (1pt);
							}
						\fill[red] (-\al,0) circle (1pt);
						\foreach \i in {-1,-3,-5,3,5}{%
							\fill[blue] (\al,\i*\del) circle (1pt);
							\fill[blue] (-\al,\i*\del) circle (1pt);
							}
						\fill[blue] (\al,\del) circle (1pt);
					\end{tikzpicture}
					\qquad
					\begin{tikzpicture}
						\def\al{1cm}\def\del{.4cm}\def\fac{0.9}
						\begin{scope}
							\clip (-1.7*\al,-5.5*\del) rectangle (1.7*\al,5.5*\del);
							\foreach \i in {7,6,5,4,3,2,1,0,-1,-2,-3,-4,-5,-6,-7}{%
								\draw[lightgray] (-3*\al,\i*\del+3*\del)--(3*\al,\i*\del-3*\del);
								\draw[lightgray] (-3*\al,\i*\del)--(3*\al,\i*\del);
								}
							\foreach \i in {-5,-4,-3,-2,-1,0,1,2,3,4,5}{%
								\foreach \j in {-2,0,2}{%
									\fill[gray] (\j*\al,\i*\del) circle (1pt);
								}
							}
						\end{scope}
						\foreach \i in {0,1,2,-1,-2}{%
							\fill[red] (\al,2*\i*\del) circle (1pt);
							\fill[red] (-\al,2*\i*\del) circle (1pt);
							}
						\foreach \i in {-1,-3,-5,1,3,5}{%
							\fill[blue] (\al,\i*\del) circle (1pt);
							\fill[blue] (-\al,\i*\del) circle (1pt);
							}
						\draw (0,-6*\del)--(0,6*\del);
						\foreach \i in {0,2,4}{%
							\draw[latex-latex,red,scale=\fac] (-\al,-\i*\del) -- (\al,\i*\del);							
						}
						\foreach \i in {1,3,5}{%
							\draw[latex-latex,blue,scale=\fac] (-\al,-\i*\del) -- (\al,\i*\del);							
						}
						\foreach \i in {0,2,4}{%
							\draw[latex-latex,red,scale=\fac] (-\al,\i*\del) -- (\al,-\i*\del);							
						}
						\foreach \i in {1,3,5}{%
							\draw[latex-latex,blue,scale=\fac] (-\al,\i*\del) -- (\al,-\i*\del);							
						}
						\draw (-\al,2*\del) circle (2pt);
						\draw (\al,-2*\del) circle (2pt);
						\node[anchor=east] at (-\al,2*\del) {$\alpha_{tst}$};
						\node[anchor=west] at (\al,-2*\del) {$-\alpha_{tst}$};
					\end{tikzpicture}
				\]		
				where the actions of $s$ and $t$ are indicated by the gray arrows, and the red (and blue) points are the 
				orbit of $\alpha_s$ (and $\alpha_t$) via $W$. The points in the vertical black line are fixed by $W$.
				
				In the picture on the right, one can see the action of (not only simple) reflections (the red ones are conjugates of
				$s$ and the blue ones of $t$).
			 	Each reflection $x$ is associated with a unique pair  of \textit{roots} $\pm \alpha_x\in\mathfrak{h}^*$ and 
			 	of \textit{coroots} $\pm\alpha_x^\vee\in \mathfrak{h}$, such that 
			 	\[
		 			x(v)=v-\langle \alpha_x, v\rangle \alpha_x^\vee,\quad \forall v\in\mathfrak{h}
			 	\]
			 	If $x=wsw^{-1}$ one sees that $\alpha_x^\vee=\pm w(\alpha_s^\vee)$ and $\alpha_x=\pm w(\alpha_s)$. 	
			 	We put $\alpha_x:=w(\alpha_s)$ where $w$ is of minimal length (and similarly with $t$ instead of $s$).
			 	In the picture, we have indicated for example $\alpha_{tst}=t(\alpha_s)$.
			 	In the context of \cite{Kac}, these correspond to the positive roots and coroots of the corresponding
			 	Kac-Moody algebra, but we extend the definition to all realizations of $\tilde{A_1}$.
			\end{enumerate}
%		\comment{figura di $E^*$... vedi Bourbaki Groupes et alg\`ebres de Lies, V, \S 4, no.\ 9}		
		Notice that in order to satisfy Demazure surjectivity for such realizations we may have to assume that
		2 is invertible in $\kk$ (we definitely need this for (\textit{i}) and (\textit{ii}) and we could avoid this for 
		(\textit{iii})).
	\end{exa}
	\subsection{The diagrammatic Hecke category (in type $\tilde{A_1}$)}
	We now define a category $\D=\D(\mathfrak{h},\kk)$ which is a
	$\kk$-linear monoidal category enriched in graded $R$-bimodules, depending on the above data.
	First one defines the Bott-Samelson category $\D_{\mathrm{BS}}$ by generators and relations,
	then one gets $\D$ as the Karoubi envelope of the closure of $\D_{\mathrm{BS}}$ by direct sums and shifts.

	\begin{enumerate}
		\item The objects of $\DBS$ are generated by tensor product from objects $B_s$ for $s\in S$. So a general object
			corresponds to a Coxeter word: if
			$\uw=\underline{s_1\dots s_n}$, let $B_{\uw}$ denote the object
			$B_{s_1}\otimes \cdots \otimes B_{s_n}$. Let also $\un$ denote the monoidal unit. 
		\item Morphisms in $\Hom_{\DBS}(B_{\uw_1},B_{\uw_2})$
			are $\kk$-linear combinations of \textit{Soergel graphs}, which are defined as follows.
			\begin{itemize}
				\item We associate a color to each simple reflection;
				\item A Soergel graph is then a colored, \textit{decorated} planar graph 
					contained in the planar strip $\mathbb{R}\times [0,1]$, with boundary in 
					$\mathbb{R}\times\{0,1\}$;
				\item The bottom (and top) boundary are arrangements
					of boundary points colored according to the letters of the source word $\uw_1$ 
					(and target word $\uw_2$, respectively).
				\item The edges of the graph are colored in such a way that those connected with the boundary have consistent colors.
				\item The other vertices of the graph are then either: 
					\begin{enumerate}
						\item univalent (called \textit{dots}) which are declared of degree 1, or;
						\item trivalent	with three edges of the same color, degree $-1$, or;
						\item $2m_{st}$-valent with edges of alternating colors corresponding to $s$ and $t$, if $m_{st}$ is the order of $st$ in $W$, of degree 0.
					\end{enumerate}
				\item Decorations are boxes labeled by homogeneous elements in 
					$R$ that can appear in
					any region (i.e.\ connected component of the complementary of the graph): 
					we will usually omit the boxes and just write the polynomials.
			\end{itemize}
		\item These diagrams are identified via some relations:
			\begin{description}
				\item[Polynomial relations.] The first such relation says that whenever two polynomials are in the same region, they multiply
					(this makes morphism spaces $R$-bimodules, by acting on the leftmost or the rightmost region). Then we have also
					\begin{gather}
						\begin{tikzpicture}[baseline=0,scale=.7, transform shape] \label{barbell}
							\draw[gray,dashed] (0,0) circle (1cm); \draw[red] (0,.5)--(0,-.5); \fill[red] (0,.5) circle (2pt);\fill[red] (0,-.5) circle (2pt);
						\end{tikzpicture}
						=
						\begin{tikzpicture}[baseline=0,scale=.7, transform shape]
							\draw[gray,dashed] (0,0) circle (1cm); \node[draw] at (0,0) {$\alpha_{\textcolor{red}{s}}$};
						\end{tikzpicture}			\\
						\begin{tikzpicture}[baseline=0,scale=.7, transform shape]\label{sliding}
							\draw[gray,dashed] (0,0) circle (1cm); \draw[red] (0,1)--(0,-1); 
							\node[draw] at (.5,0) {$f$};
						\end{tikzpicture}			
						=
						\begin{tikzpicture}[baseline=0,scale=.7, transform shape]
							\draw[gray,dashed] (0,0) circle (1cm); \draw[red] (0,1)--(0,-1); 
							\node[draw, inner sep =.08cm] at (-.5,0) {$\textcolor{red}{s}(f)$};
						\end{tikzpicture}			
						+
						\begin{tikzpicture}[baseline=0,scale=.7, transform shape]
							\draw[gray,dashed] (0,0) circle (1cm); \draw[red] (0,1)--(0,.5); \fill[red] (0,.5) circle (2pt);
							\draw[red] (0,-1)--(0,-.5); \fill[red] (0,-.5) circle (2pt); 
							\node[draw] at (0,0) {$\partial_{\textcolor{red}{s}}(f)$};
						\end{tikzpicture}
						\end{gather}
				\item[One color relations.] These are the following
					\begin{gather}
						\begin{tikzpicture}[baseline=0,color=red,scale=sqrt(2)/3,scale=.7, transform shape]\label{associativity}
							\draw (-1.5,-1.5)--(-0.6,0)--(0.6,0)--(1.5,-1.5);
							\draw (-1.5,1.5)--(-0.6,0); \draw(0.6,0)--(1.5,1.5);
							\draw[dashed,gray] (0,0) circle ({3/sqrt(2)});
						\end{tikzpicture}
						=
						\begin{tikzpicture}[baseline=0,color=red,scale=sqrt(2)/3,rotate=90,scale=.7, transform shape]
							\draw (-1.5,-1.5)--(-0.6,0)--(0.6,0)--(1.5,-1.5);
							\draw (-1.5,1.5)--(-0.6,0); \draw(0.6,0)--(1.5,1.5);
							\draw[dashed,gray] (0,0) circle ({3/sqrt(2)});
						\end{tikzpicture}\\
						\begin{tikzpicture}[color=red,baseline=0,scale=.7, transform shape]\label{dotline}
							\draw[gray,dashed] (0,0) circle (1cm);
							\draw (0,-1) -- (0,1); \draw (0,0)--(.3,0);\fill (0.3,0) circle (2pt);
						\end{tikzpicture}
						=
						\begin{tikzpicture}[baseline=0,scale=.7, transform shape]
							\draw[gray,dashed] (0,0) circle (1cm);						
							\draw[red] (0,-1) -- (0,1);
						\end{tikzpicture}
						\\
						\begin{tikzpicture}[baseline=0,scale=.7, transform shape]
							\draw[gray,dashed] (0,0) circle (1cm);
							\draw[red] (0,-1)--(0,-0.3) arc (-90:270:0.3cm);
						\end{tikzpicture}
						= 0
					\end{gather}
				\item[Two color relations.] These allow to move dots,
					or trivalent vertices, past $2m_{st}$-valent vertices. They involve the 
					\textit{Jones-Wenzl morphisms} that will be used later, but we will not 
					need these relations;
				\item[Three color relations.] For each finite parabolic subgroup
					of rank 3, these relations ensure compatibilities between the three corresponding 
					$2m$-valent vertices. We will not need them either.
			\end{description}
			For the last two relations, more details can be found in \cite[\S 5.1]{EW}.
			
			Notice that all the relations are homogeneous, so the morphism spaces are \textit{graded}
			$R$-bimodules.
	\end{enumerate}
	This completes the definition of $\DBS$.
	\begin{exa}
		In the $\tilde{A_1}$ setting, we associate the color red to $s$ and blue to $t$.
		Here we only have polynomial and one color relations (as there is no braid relation in 
		the Coxeter group).
		The following diagram represents a morphism from $B_{\underline{tsttts}}$ to $B_{\underline{ts}}$
		\begin{center}
			\begin{tikzpicture}
				\draw (.5*\d,0)--(7.5*\d,0);
				\draw (.5*\d,4*\h)--(7.5*\d,4*\h);				
				\draw[blue] (\d,0) ..controls (\d,1.5*\h).. (2*\d,2*\h);
				\draw[blue] (2*\d,2*\h) ..controls (1.7*\d,2.3*\h).. (\d,2.3*\h); \fill[blue] (\d,2.3*\h) circle (1.5pt);
				\draw[blue] (2*\d,2*\h) ..controls (3*\d,2.5*\h).. (3*\d,3*\h);
				\draw[blue] (3*\d,3*\h) -- (4*\d,3*\h); \fill[blue] (4*\d,3*\h) circle (1.5pt);
				\draw[blue] (3*\d,3*\h) -- (3*\d,4*\h);
				\draw[red] (2*\d,0) ..controls (2.3*\d,1.4*\h).. (4.5*\d,2*\h);
				\draw[red] (4.5*\d,2*\h) ..controls (6.7*\d,1.4*\h).. (7*\d,0);
				\draw[red] (4.5*\d,2*\h) ..controls (5*\d,2.5*\h).. (5*\d,3.5*\h);
				\draw[red] (5*\d,3.5*\h) ..controls (6.5*\d,3.5*\h) and (5*\d,2.5*\h).. (6.5*\d,2.5*\h); \fill[red] (6.5*\d,2.5*\h) circle (1.5pt);
				\draw[red] (5*\d,3.5*\h) -- (5*\d,4*\h);
				\pic[blue] at (3*\d,0) {bridge={\d}{3*\d}{1.7*\h}};
				\begin{scope}
					\clip (3*\d,0) ..controls (3*\d,1.7*\h) and (6*\d,1.7*\h).. (6*\d,0);
					\draw[blue] (5*\d,2*\h)--(5*\d,.7*\h);\fill[blue] (5*\d,.7*\h) circle (1.5pt);
				\end{scope}
				\node at (5*\d,.25*\h) {$\alpha_s$};
				\node at (3.6*\d,2.3*\h) {$\alpha_s^2\alpha_t$};
				\node at (6.5*\d,2*\h) {$\alpha_s^3$};
				\node at (1.8*\d,3.3*\h) {$sts(\alpha_t)$};
			\end{tikzpicture}
		\end{center}
		We leave as an exercise to the reader to use the relations to simplify this diagram and \
		reduce it to a linear combination of diagrams with all polynomial on the left.
	\end{exa}
	
	By adjunction arguments we will actually only need to deal with morphism spaces of the 
	form $\Hom_{\D}(B_{\uw},\un)$. Let us introduce an operation on morphisms of this form that will 
	come useful later.	
	For a given positive integer $k$, consider the morphism
	\[
		\psi_{k}^s:=
		\underbrace{%
		\begin{tikzpicture}[baseline=.1*\h]
			\draw (-.5*\d,0)--(7.5*\d,0);
			\pic[red] at (0,0) {aqueduct={2*\d}{5*\d}{7*\d}{\h}};
			\node at (3.5*\d,.3*\h) {\dots};
			\foreach \i in {0,2,5,7}{%
				\node at (\i*\d,-.3*\h) {$B_s$};
				}
			\node at (3.5*\d,-.2*\h) {\dots};
		\end{tikzpicture}
		}_{k+1}
		\in \Hom_{\D}(B_{\underline{ss\dots ss}},\un)
	\]	
	For $i=1,\dots, k$, let $\uw_i$ be a Coxeter word and
	$\gamma_i\in \Hom_{\D}(B_{\uw_i},\un)$. Then
	we define
	\[
		\ropen \gamma_1 \rcomma \dots \rcomma \gamma_k \rclosed :=
		\psi_{k}^s
		\circ
		(\id_{B_s}\otimes\gamma_1\otimes\id_{B_s}\otimes\dots\otimes\id_{B_s}\otimes\gamma_k\otimes\id_{B_s})
	\]
	In other words, this is the morphism obtained by covering the 
	original ones with an $s$-arch and separating them by vertical strands.
	\subsection{Rouquier complexes}
	We consider the homotopy category $\Kb(\D)$. We have the standard and costandard complexes
		\[
			\begin{tikzcd}[row sep=small,%
							execute at end picture={%
								\def\hdott{.32cm}
								\pic[red,yshift=.3em] at (A) {dot={\hdott}};
								\pic[red,yshift=1.5em,yscale=-1] at (B) {dot={\hdott}};
								}]
				F_{\textcolor{red}{s}}=\,\cdots\ar[r]		& 0 \ar[r]		& 0	\ar[r]								& B_{\textcolor{red}{s}}\ar[r,""{coordinate,name=A}]	& \un(1)\ar[r]	& 0\ar[r]		& \cdots \\
															& \cohdeg{-2}	& \cohdeg{-1}							& \cohdeg{0}											& \cohdeg{1}	& \cohdeg{2}	& \\
				F_{\textcolor{red}{s}}^{-1}=\,\cdots\ar[r]	& 0\ar[r]		& \un(-1)\ar[r,""{coordinate,name=B}]	& B_{\textcolor{red}{s}}\ar[r]							& 0	\ar[r]		& 0\ar[r]		& \cdots \\
			\end{tikzcd}
		\]
%		\begin{gather*}
%			%
%			\begin{tikzpicture}[baseline=-3]
%				\node at (0,0) (z) {$B_{\textcolor{red}{s}}$}; \node[right=of z] (p1) {$\un(1)$}; \node[right=of p1] (p2) {$0$};
%				\node[left=of z] (m1) {$0$}; 
%				\node[left=of m1] (m2) {$\cdots$}; \node[right=of p2] (p3) {$\cdots$}; 			
%				\draw[->] (p1) -- (p2);\draw[->](m2) to (m1);\draw[->](m1) to (z);
%				\draw[->](z) to coordinate[above=.4cm] (a) (p1); \draw[->] (p2) to (p3);
%	%			\coordinate (A) at (a);
%				\draw[red] (a)-- ++(0,-0.2);\fill[red] (a) circle (1pt);
%				\node[green, below=.1 of p2] {\footnotesize $2$};
%				\node[green, below=.1 of m1] {\footnotesize $-1$};
%				\node[green, below=.1 of z] {\footnotesize $0$};
%				\node[green, below=.1 of p1] {\footnotesize $1$};
%	  		\end{tikzpicture}\\
%			\begin{tikzpicture}[baseline=-3]
%				\node at (0,0) (z) {$B_{\textcolor{red}{s}}$}; \node[right=of z] (p1) {$0$}; \node[right=of p1] (p2) {$\cdots$};
%				\node[left=of z] (m1) {$\un(-1)$}; 
%				\node[left=of m1] (m2) {$0$}; \node[left=of m2] (m3) {$\cdots$}; 			
%				\draw[->] (m3) -- (m2);\draw[->](m2) to (m1);\draw[->](m1) to coordinate[above=.2cm] (a) (z);
%				\draw[->](z) to (p1); \draw[->] (p1) to (p2);
%	%			\coordinate (A) at (a);
%				\draw[red] (a)-- ++(0,0.2);\fill[red] (a) circle (1pt);
%				\node[green, below=.1 of m2] {\footnotesize $-2$};
%				\node[green, below=.1 of m1] {\footnotesize $-1$};
%				\node[green, below=.1 of z] {\footnotesize $0$};
%				\node[green, below=.1 of p1] {\footnotesize $1$};
%	  		\end{tikzpicture},
%		\end{gather*}	
		where $(-)$ is the shift in the grading of $\D$, and the numbers in the middle denote the cohomological degrees.
	Then, for any braid word $\uom=\sigma_{s_1}^{\pm 1}\sigma_{s_2}^{\pm 1}\dots \sigma_{s_n}^{\pm 1}$, we put
	\[
		F_{\uom}^\bullet:=F_{s_1}^{\pm 1}\otimes F_{s_2}^{\pm 1}\otimes \dots \otimes F_{s_n}^{\pm 1}
	\]
	The objects of this form, or rather their isomorphism classes in $\Kb(\D)$, are called
	\textit{Rouquier complexes}. These objects satisfy the following properties (see \cite{Rou_cat}, or 
	\cite{Mal_red} for a diagrammatic proof).
		\begin{pro}\label{pro_braidRou}
			One has the following.
			\begin{enumerate}
			 	\item\label{item_propinvers} Let $s\in S$, then $F_s F_s^{-1}\cong F_s^{-1}F_s\cong \un$.
				\item\label{item_propbraid} Let $s,t\in S$ with $m_{st}<\infty$, then
					\[
						\underbrace{F_s F_tF_sF_t \cdots}_{m_{st} \text{ times}} \cong %
							\underbrace{F_t F_sF_tF_s \cdots}_{m_{st} \text{ times}}
					\]
			\end{enumerate}
			Hence, for each pair of braid words $\uom_1$, $\uom_2$ expressing
			the same element $\omega\in B_W$, there is an isomorphism $F_{\uom_1}^\bullet\cong F_{\uom_2}^\bullet$.
			Furthermore:
			\begin{enumerate}[resume]
				\item\label{item_propcanonic}(Rouquier Canonicity)
					for each $\uom_1$ and $\uom_2$ as above, we have 
					\[
						\Hom(F_{\uom_1}^\bullet,F_{\uom_2}^\bullet)\cong R,
					\]
					and one can chose
					$\gamma_{\uom_1}^{\uom_2}$ such that the system 
					$\{\gamma_{\uom_1}^{\uom_2}\}_{\uom_1,\uom_2}$ is transitive.
%				\item\label{item_proptransit} Furthermore, for each pair of braid words $\uom_1$, $\uom_2$ corresponding to 
%					the same element $\omega\in B_W$, there is an isomorphism 
%					$\gamma_{\uom_1}^{\uom_2}:F_{\uom_1}\rightarrow F_{\uom_2}$, such that 
%					$\{\gamma_{\uom_1}^{\uom_2}\}$ is a transitive system of isomorphisms.
			\end{enumerate}
		\end{pro}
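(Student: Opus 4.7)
The plan is to treat each of the three items in turn, relying on the diagrammatic calculus introduced above.

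For (\textit{i}), the plan is to exhibit explicit maps $\un\to F_sF_s^{-1}$ and $F_sF_s^{-1}\to\un$ (and their $F_s^{-1}F_s$ analogues) and check they are mutually inverse in $\Kb(\D)$. Unfolding the tensor product, $F_sF_s^{-1}$ lives in cohomological degrees $-1,0,1$ with terms $B_s(-1)$, $B_sB_s\oplus\un$, $B_s(1)$; applying the isomorphism $B_sB_s\cong B_s(1)\oplus B_s(-1)$ given by the standard pair of trivalent/dot diagrams splits off two contractible summands $B_s(-1)\xrightarrow{\id} B_s(-1)$ and $B_s(1)\xrightarrow{\id} B_s(1)$ straddling the three degrees, leaving $\un$ in degree $0$ as a homotopy-equivalent summand. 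The same picture, run upside-down, handles $F_s^{-1}F_s$.

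Part (\textit{ii}) is vacuous in our setting since $m_{st}=\infty$ in type $\tilde A_1$, but in general one builds the braid chain map using the $2m_{st}$-valent vertex together with auxiliary dots and trivalent vertices arranged so as to match the boundary; the inverse chain map is obtained by rotating the picture, and the homotopy identities reduce to the two- and three-colour Elias--Williamson relations. I would record this and otherwise refer to \cite{Rou_cat} or \cite{Mal_red}.

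For (\textit{iii}), the strategy is first to use (\textit{i}) and (\textit{ii}) to get isomorphisms $F_{\uom_1}^\bullet\cong F_{\uom_2}^\bullet$ whenever $\uom_1,\uom_2$ represent the same braid, which reduces the computation of $\Hom(F_{\uom_1}^\bullet,F_{\uom_2}^\bullet)$ to that of $\End(F_{\uom}^\bullet)$ for a single chosen word. Next, using (\textit{i}) once more, I would reduce to the positive case $\uom=\uw$, since $F_sF_s^{-1}\cong\un$ lets one cancel every negative crossing at the cost of replacing $\uw$ by a longer positive word expressing the same braid (modulo a shift that is absorbed into the isomorphism). For positive $\uw=s_1\cdots s_n$, I would then invoke the reduction from \cite{Mal_red}: the minimal model $F_{\uw}$ is a complex whose graded pieces are Bott--Samelson objects indexed by subwords of $\uw$, with the unique "full" subword contributing an $\un$-summand in cohomological degree $\ell(\uw)$ shifted appropriately. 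Computing $\Hom$ into this model, the $\un$-summand gives a copy of $R$, while the other summands $B_{\ux}$ for proper subwords $\ux$ have $\Hom(F_{\uw}^\bullet, B_{\ux}[*])$ concentrated in cohomological degrees incompatible with the differentials, so they do not survive homotopy.

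The hard part is (\textit{iii}): showing that \emph{all} additional contributions in $\End(F_{\uw}^\bullet)$ are homotopic to zero, and that the chosen isomorphisms $\gamma_{\uom_1}^{\uom_2}$ compose transitively. The first is a genuine calculation with light-leaves type bases for the morphisms between Bott--Samelson summands, together with the structure of the differential coming from \cite{Mal_red}; the second requires fixing a normalization (imposing that each $\gamma_{\uom_1}^{\uom_2}$ restricts to $\id_R$ on the canonical $R$-summand of both source and target), after which transitivity is automatic from the fact that $R\to R$ over $R$ has only scalar endomorphisms.
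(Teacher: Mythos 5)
The paper does not actually prove this proposition; it cites \cite{Rou_cat} and \cite{Mal_red}. So your sketch should be assessed on its own merits.

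Your treatment of (\textit{i}) is correct and is the standard Gaussian-elimination argument: $B_sB_s\cong B_s(1)\oplus B_s(-1)$ splits off the two contractible pieces, leaving $\un$. Your comment on (\textit{ii}) is also fine: it is vacuous in $\tilde A_1$, and deferring the general case to the literature matches what the paper itself does.

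The genuine gap is in (\textit{iii}), at the step where you propose to ``cancel every negative crossing at the cost of replacing $\uw$ by a longer positive word expressing the same braid.'' This does not work: a braid word containing $\sigma_s^{-1}$ is generally not equivalent in $B_W$ to any positive word, and $F_s^{-1}$ is not a shift of $F_s$, so there is no shift to absorb. Consequently, your subsequent appeal to the reduction of positive Rouquier complexes and to light-leaves bases does not get off the ground for arbitrary $\uom$. The clean argument uses invertibility directly: by (\textit{i}) each $F_{\uom}^\bullet$ is invertible in $\Kb(\D)$ with inverse $F_{\uom^{-1}}^\bullet$, so
$\Hom(F_{\uom_1}^\bullet,F_{\uom_2}^\bullet)\cong\Hom(\un,F_{\uom_1^{-1}\uom_2}^\bullet)$,
and since $\uom_1^{-1}\uom_2$ expresses the identity braid, (\textit{i}) and (\textit{ii}) give $F_{\uom_1^{-1}\uom_2}^\bullet\cong\un$, whence the $\Hom$ space is $\Hom(\un,\un)=R$ with no further computation needed. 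Once that is in place, your normalization idea for transitivity --- fix each $\gamma_{\uom_1}^{\uom_2}$ to induce $\id_R$ on the $R$-summand --- is essentially the right way to prove canonicity, but as written it presupposes the existence of a distinguished $R$-summand on both sides, which is precisely what the invertibility route supplies (the $\un$-summand under the identification $F_{\uom_1^{-1}\uom_2}^\bullet\cong\un$). You should make that identification explicit and check that the resulting choices compose transitively; this is the actual content of Rouquier's canonicity and is not ``automatic'' from $\End(R)=R$ alone without fixing the identifications coherently.
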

		Let us observe that, thanks to these properties,
		the Rouquier complex $F_{\omega}$ associated to $\omega\in B_W$ is well defined up to a canonical isomorphism.
		
		Another important property is the so-called Rouquier formula. This was
		conjectured in \cite{Rou_der}, and proved in \cite{LibWil}, and in \cite{Maki}.
		See again also \cite{Mal_red} for a diagrammatic proof.
		\begin{pro}\label{pro_rouf}
			Let $w,v\in W$ and let $\uw$ and $\uv$ be reduced words expressing them.
			Let $\uom$ be the positive word lift of $\uw$ and $\underline{\nu}$ 
			be the negative word lift for $\uv$. Then
			\[
				\Hom^\bullet(F_{\uom},F_{\underline{\nu}})\simeq	\begin{cases}
																	R[0] & \text{if $w=v$,} \\
																	0 & \text{otherwise.}
																\end{cases}
			\] 
		\end{pro}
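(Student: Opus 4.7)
The plan is to convert the statement into one about Hom from a single positive Rouquier complex to the monoidal unit $\un$, and then to apply the reduction of positive Rouquier complexes from \cite{Mal_red}. By Proposition \ref{pro_braidRou}\ref{item_propinvers}, the complexes $F_s$ and $F_s^{-1}$ are mutually inverse equivalences of $\Kb(\D)$, so each is both a left and a right adjoint to the other. Moving the factors $F_{s_i}^{-1}$ appearing in $F_{\underline{\nu}}$ one at a time from the target into the source via these adjunctions, I obtain a canonical isomorphism
\[
	\Homb(F_{\uom}, F_{\underline{\nu}}) \cong \Homb(F_{\underline{\eta}}^\bullet, \un),
\]
where $\underline{\eta}$ is the positive braid word obtained by prepending to $\uw$ a reduced expression for $v^{-1}$. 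The image of $\underline{\eta}$ in $W$ is $v^{-1} w$, and note that $\underline{\eta}$ is typically \emph{not} reduced as a Coxeter word, even when $w = v$.

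Next I would invoke the reduction from \cite{Mal_red}: the positive Rouquier complex $F_{\underline{\eta}}^\bullet$ admits a homotopy equivalent direct summand $F_{\underline{\eta}}$ whose Bott--Samelson terms are indexed by the subwords of $\underline{\eta}$, each placed in an explicit cohomological degree. By the light-leaves basis of \cite{EW}, the morphism space $\Homb(B_{\underline{\xi}}, \un)$ for a subword $\underline{\xi}$ is then an explicit free graded $R$-module, with basis indexed by the subexpressions of $\underline{\xi}$ that evaluate to the identity of $W$.

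The final step is the cohomology computation: one must show that when $w = v$ the total complex has cohomology concentrated in degree zero, equal to $R$, and that when $w \ne v$ it is acyclic. The main obstacle is precisely this cancellation analysis. A natural approach is induction on $\ell(v) + \ell(w)$: peel off the first letter $t$ of $\underline{\eta}$, factor $F_{\underline{\eta}}^\bullet \cong F_t \otimes F_{\underline{\eta}'}^\bullet$, and either apply the adjunction $F_t \dashv F_t^{-1}$ to rewrite the Hom in a form of strictly smaller total word length, or use the distinguished triangle relating $F_t$ with $B_t$ and $\un$ in $\Kb(\D)$ to split the problem into two simpler pieces. The combinatorial crux is matching these reductions with the cancellations among light leaves, so that the bulk of the basis assembles into an acyclic subcomplex while a single copy of $R$ survives exactly when $v^{-1}w = e$.
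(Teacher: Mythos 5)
The paper does not prove this statement; it records it as known, citing Rouquier's original conjecture and the proofs of Libedinski--Williamson, Makisumi, and the author's own diagrammatic proof in \cite{Mal_red}. So there is no in-paper argument to compare yours against.

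Your setup is sound: the adjunction step reducing $\Homb(F_{\uom},F_{\underline{\nu}})$ to $\Homb(F_{\underline{\eta}}^\bullet,\un)$ with $\underline{\eta}$ a positive braid word representing $v^{-1}w$ is correct, and invoking the homotopy reduction $F_{\underline{\eta}}\hookrightarrow F_{\underline{\eta}}^\bullet$ from \cite{Mal_red} together with the light-leaves basis is indeed the framework that paper uses. However, there is a genuine gap where the actual work lies: you do not carry out the cancellation argument that is the entire content of the proposition. You say ``the main obstacle is precisely this cancellation analysis'' and then offer only a speculative induction. Neither branch of that induction closes as stated. Applying the adjunction $F_t\dashv F_t^{-1}$ to $\Homb(F_t F_{\underline{\eta}'}^\bullet,\un)$ yields $\Homb(F_{\underline{\eta}'}^\bullet,F_t^{-1})$, which has the \emph{same} total word length, not strictly smaller; you only get a reduction if an $F_t^{-1}F_t$ or $F_tF_t^{-1}$ pair happens to cancel, and arranging that requires additional braid-monoid combinatorics you do not supply. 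The triangle branch produces a term $\Homb(B_tF_{\underline{\eta}'}^\bullet,\un)$, which is not a Hom between Rouquier complexes and so falls outside the scope of your inductive hypothesis. So the claim that ``a single copy of $R$ survives exactly when $v^{-1}w=e$'' is asserted, not proved. To complete this you would need either an explicit contracting homotopy on the light-leaves complex when $v^{-1}w\neq e$ (for instance by a Gaussian-elimination pairing of light leaves analogous to what the paper does for the antispherical case in Lemma \ref{lem_order} and Theorem \ref{thm_antisph}), or a well-founded induction with a carefully chosen statement that absorbs the $B_t$-twisted Hom spaces. As written, the proposal identifies the right scaffolding but leaves the proof itself undone.
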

		\subsection{Wakimoto sheaves}
		Let now $W$ be an affine Weyl group of the form $\Wf\ltimes\roo^\vee$, where $\ZZ\Phi^\vee$ is the coroot lattice.
		
		Consider $\lambda\in \roo^\vee$ and write $\lambda=\mu-\nu$ with $\mu$ and $\nu$ dominant.
		Then, with $\tau_\mu$ and $\tau_\nu$ positive lifts of $t_{\mu}$ and $t_\mu$, we define
		the \emph{(modular) Wakimoto sheaf} associated to $\lambda$ as 
		\[
			\Wak{\lambda}:= F_{\tau_\mu} F_{\tau_\nu^{-1}}
		\]
		It is not difficult to prove that
		this definition does not depend on the choice of $\mu$ and $\nu$. Furthermore, in the same way 
		we have
		\begin{equation}\label{eq_addWaki}
			\Wak{\lambda_1}\Wak{\lambda_2}\cong \Wak{\lambda_1+\lambda_2}
		\end{equation}
		These are the objects categorifying the lattice part of the Hecke algebra, and our goal is 
		to study the subcategory they form. More precisely we want to understand the morphism spaces:
		\begin{equation}\label{eq_morphWaki}
			\Hom_{\Kb(\D)}(\Wak{\lambda_1},\Wak{\lambda_2}[i]), \quad i\in \ZZ.
		\end{equation}
		Then notice that \eqref{eq_addWaki} implies that $\Wak{-\lambda}$ is dual to $\Wak{\lambda}$, hence
		one can reduce to the case $\lambda_2=0$,
		which means $\Wak{\lambda_2}=\un$.
%	Now, let $W$ be an affine Weyl group of the form $W_{\mathrm{f}}\ltimes\roo^\vee$.
%	Consider $\lambda\in \roo^\vee$ and write $\lambda=\mu-\nu$ with $\mu$ and $\nu$ dominant 
%	co-characters (i.e.\ $\langle \alpha,\mu \rangle,\langle \alpha,\nu \rangle>0$ for all root $\alpha$).
%	Now write the translation by $\mu$ as $w=s_1s_2\cdots s_k$ and the translation by $\nu$ as $v=t_1t_2\cdots t_h$,
%	with $\uw=\underline{s_1s_2\cdots s_k}$ and $\uv=\underline{t_1t_2\cdots t_h}$ reduced words. Then 
%	the \textit{(modular) Wakimoto sheaf} associated to $\lambda$ is
%	\[
%		\Wak{\lambda}^\bullet= F_{\uw}F_{\uv^{-1}}
%	\]
%	where $\uv^{-1}$ denotes $\underline{t_h^{-1}\cdots t_2^{-1}t_1^{-1}}$.
%
%	The problem we address in this paper is to describe morphisms between (shifts of) these objects. 
%	
%	One has homotopy equivalences $\Wak{\lambda}^\bullet \otimes \Wak{\lambda'}^\bullet \cong \Wak{\lambda+\lambda'}^\bullet$,
%	so we can restrict ourselves the case to $\Hom(\Wak{\lambda}^\bullet,\un[i])$. 
%	
%	Furthermore, by the Rouquier formula, if $\lambda$ is dominant, this space is zero.
%	
	\begin{exa}
		The Wakimoto sheaves in type $\tilde{A_1}$ are, for $n$ a non-negative integer,
		\[
			\Wak{n}	= F_{\mathbf{t}_{2n}}\quad\text{and}\quad	\Wak{-n}= F_{\mathbf{t}_{2n}^{-1}}
		\]
		and by the above, we will only have to consider $\Hom(\Wak{n},\un[i])$. Furthermore, Rouquier formula
		implies that the result is zero for positive $n$.
	\end{exa}
	To compute this morphism space, we will choose appropriate representatives for the 
	Wakimoto sheaves in the category of complexes and study the cohomology of the dg module of morphisms in the dg category of complexes.
	We will use the notation $\Homb$ to indicate the complex of morphisms in the dg category of complexes.
%	Let us consider the dg setting $\Cdg(\D)$ and the dg-module of morphisms
%	\[
%		\Homb(\Wak{\lambda},\un)
%	\]
%	The cohomology of this dg-module then gives the morphism spaces in the homotopy category.
%
%	One also has a diagrammatic description of dg-modules of morphisms
%	between objects $F_{\uom}$.  We will recall this construction for our case in \S \ref{sec_twocolor}.
%
%	The object of study of this paper is the above dg-module in type $\tilde{A_1}$
%	\[
%		C_n:=\Homb(\Wak{-n},\un)
%	\]
%	for $n>0$.
		\section{The characteristic zero case}
		Under the assumption that $\Bbbk$ is of characteristic zero, the category $\D$ has 
		nice decomposition properties: for instance, in type $\tilde{A_1}$, one can compute the 
		minimal subcomplexes of the Wakimoto sheaves quite easily.

		More precisely, in this section we assume 
		\begin{align}\label{eq_asschr0}
			&\chr(\kk)=0,& &\qn{n}{s},\qn{n}{t}\in\kk^\times, \, \forall n\in \ZZ_{>0}.&
		\end{align}
		Notice that for the standard Cartan matrix, 
	 	the two-colored quantum numbers are just the integers:
 		\[
 			\qn{n}{s}=\qn{n}{t}=n.
	 	\]		
		So, in this case, the second assumption in \eqref{eq_asschr0} is automatic from the first.
		\subsection{More on two-colored quantum numbers}
		We will use the following properties, which can be easily proved by induction: 
		if $n$ is odd and $m\ge n$, then
		\begin{align*}
			&[n][m]_x = \sum_{k=1}^n	[m-n+2k-1]_x, &
			%+ [m-n+3]_x +\dots +[m+n-3]_x+[m+n-1]_x %}_{n \text{ terms}} 
			&[n][m]_y = \sum_{k=1}^n	[m-n+2k-1]_y.
			%[m-n+1]_y + [m-n+3]_y +\dots +[m+n-3]_y+[m+n-1]_y %}_{n \text{ terms}} \\
		\end{align*}
		If instead $n$ is even then
		\begin{align*}
			&[n]_y[m]_x = \sum_{k=1}^n	[m-n+2k-1]_y, &
			%\underbrace{
%				[m-n+1]_y + [m-n+3]_y +\dots +[m+n-3]_y+[m+n-1]_y %}_{n \text{ terms}} 
%				\\
			&[n]_x[m]_y = \sum_{k=1}^n	[m-n+2k-1]_x. 
%			%\underbrace{
%				[m-n+1]_x + [m-n+3]_x +\dots +[m+n-3]_x+[m+n-1]_x %}_{n \text{ terms}} \\
		\end{align*}
		In words, for an appropriate 
		choice of colors, the product of the two-colored quantum numbers corresponding to $n$ and $m$
		can be written as a sum of 
		an increasing sequence of $n$ two-colored quantum numbers, centered at $m$. Note that the recursive definition
		is the particular case $n=2$.

		We also define (two-colored quantum) factorials,
		\[
			[n]_x^!=[n]_x[n-1]_x\cdots [1]_x,
		\]
		and binomial coefficients,
		\[
			\qbc{n}{k}{x} = \frac{[n]_x^!}{[k]_x^![n-k]_x^!},
		\]
		and similarly for $y$. One can show that these are still elements of $\ZZ[x,y]$.
		\subsection{Minimal subcomplexes of the Wakimoto sheaves}
		As we mentioned above, under assumptions \eqref{eq_asschr0}, one can actually 
		compute the minimal subcomplex $\Wak{n}$
		of the Wakimoto sheaves in type $\tilde{A_1}$. Recall that this is a homotopy equivalent
		summand with no contractible summands: for complexes in $\D$, when
		$\kk$ is a field, or, more generally, a complete local ring, this property 
		identifies a unique	complex up to isomorphism. 
		The minimal subcomplexes $F_{\mathbf{t}_{n}}$ (and $F_{\mathbf{s}_n}$) of $F_{\ut_{n}}$ 
		(and $F_{\us_{n}}$ respectively) were computed in \cite{AMRW_free} for the (infinite) 
		dihedral group, so we will refer to this result.
		The case of $F_{\mathbf{t}_n^{-1}}$ (and $F_{\mathbf{s}_n^{-1}}$) is symmetric, but for 
		convenience of application of this result, we prefer to use the positive complexes. 
		So we compute the cohomology of 
		\begin{equation}\label{eq_homwakimin}
			\Homb(\un,\Wak{n})	
		\end{equation}
		with $n\ge 0$, which, by the remarks in the previous sections, is the same as $\Homb(\Wak{-n},\un)$.
		
		The assumptions \eqref{eq_asschr0} allow to define, for all $m$, the so-called 
		Jones-Wenzl morphism, that we represent in the following way:
		\begin{center}
			\begin{tikzpicture}[baseline=0]
				\def\radius{.7cm}
				\draw[dashed,gray] (0,0) circle (\radius);
				\node[draw,circle, inner sep=.1cm,fill=white] (a) at (0,0) {$\JW$};
				\foreach \i in {0,-90,-180,-270}{%
					\draw[blue] (\i:\radius)--(a);
					}
				\foreach \i in {-45,-135,-225}{%
					\draw[red] (\i:\radius)--(a);
					}
				\foreach \i in {35,45,55}{%
					\fill (\i:0.8*\radius) circle (.4pt);
					}
			\end{tikzpicture}
		\end{center}
		with $2m-2$ strands around the circle.
		These morphisms are related with the two colored Temperley-Lieb category and 
		can be described inductively:
		later, in the proof of a technical lemma, we will give one of the known recursive formulas for them.
		For other such formulas, see \cite{Elias_two} or \cite{EMTW}.
		
		From each Jones-Wenzl morphism, one can build an idempotent as follows
		\begin{align*}
			 &\JW_{\us_n}=% 
			 \begin{tikzpicture}[scale=.8,baseline=-2,x=.9cm]
			  \foreach \n in {6}{
			   \node at (0,0) (JW) {$\JW_{\us_n}$};
			   \ifnum \n=1 \draw ({-0.5},-0.4)--({-0.5},0.4)--({0.5},0.4)--({0.5},-0.4)--cycle; 
			   	[\else \draw ({-0.25*\n},-0.4)--({-0.25*\n},0.4)--({0.25*\n},0.4)--({0.25*\n},-0.4)--cycle;]\fi
			   \foreach \i in {1,2,3}{
			    \ifthenelse{\isodd{\i}}
			    	{\draw[red] ({0.25-0.25*\n+(0.5*(\i-1))},0.4) -- ++(0,0.5);
			    	\draw[red] ({0.25-0.25*\n+(0.5*(\i-1))},-0.4) -- ++(0,-0.5);
					}
			    	{\draw[blue] ({0.25-0.25*\n+(0.5*(\i-1))},0.4) -- ++(0,0.5);
			    	\draw[blue] ({0.25-0.25*\n+(0.5*(\i-1))},-0.4) -- ++(0,-0.5);
					}
			   };
			   \foreach \i in {\n}{
			    \draw[violet] ({0.25-0.25*\n+(0.5*(\i-1))},0.4) -- ++(0,0.5);
			    \draw[violet] ({0.25-0.25*\n+(0.5*(\i-1))},-0.4) -- ++(0,-0.5);
			   };
			   \foreach \i in {1,2,3}{
			    \fill ({0.75-0.25*\n+(0.5*(1.5+0.5*\i))},0.65) circle (.5pt);
			    \fill ({0.75-0.25*\n+(0.5*(1.5+0.5*\i))},-0.65) circle (.5pt);
			   };
			  };
			 \end{tikzpicture}
			 =
			\begin{tikzpicture}[baseline=0,x=.7cm,y=.8cm]
				\node[draw,circle, inner sep=.1cm,fill=white] (a) at (0,0) {$\JW$};
				\draw[red] (a)--(-1,0);\draw[red] (-1,-1)--(-1,1);
				\draw[blue] (a) ..controls (-.5,.5).. (-.5,1);
				\draw[blue] (a) ..controls (-.5,-.5).. (-.5,-1);
				\draw[red] (a)--(0,1);\draw[red] (a)--(0,-1);
				\draw[violet] (a)--(1,0); \draw[violet] (1,-1)--(1,1);
				\node at (.5,.75) {\dots};\node at (.5,-.75) {\dots};
			\end{tikzpicture}\\
			 &\JW_{\ut_n}=% 
			 \begin{tikzpicture}[scale=.8,baseline=-2,x=.9cm]
			  \foreach \n in {6}{
			   \node at (0,0) (JW) {$\JW_{\ut_n}$};
			   \ifnum \n=1 \draw ({-0.5},-0.4)--({-0.5},0.4)--({0.5},0.4)--({0.5},-0.4)--cycle; 
			   	[\else \draw ({-0.25*\n},-0.4)--({-0.25*\n},0.4)--({0.25*\n},0.4)--({0.25*\n},-0.4)--cycle;]\fi
			   \foreach \i in {1,2,3}{
			    \ifthenelse{\isodd{\i}}
			    	{\draw[blue] ({0.25-0.25*\n+(0.5*(\i-1))},0.4) -- ++(0,0.5);
			    	\draw[blue] ({0.25-0.25*\n+(0.5*(\i-1))},-0.4) -- ++(0,-0.5);
					}
			    	{\draw[red] ({0.25-0.25*\n+(0.5*(\i-1))},0.4) -- ++(0,0.5);
			    	\draw[red] ({0.25-0.25*\n+(0.5*(\i-1))},-0.4) -- ++(0,-0.5);
					}
			   };
			   \foreach \i in {\n}{
			    \draw[violet] ({0.25-0.25*\n+(0.5*(\i-1))},0.4) -- ++(0,0.5);
			    \draw[violet] ({0.25-0.25*\n+(0.5*(\i-1))},-0.4) -- ++(0,-0.5);
			   };
			   \foreach \i in {1,2,3}{
			    \fill ({0.75-0.25*\n+(0.5*(1.5+0.5*\i))},0.65) circle (.5pt);
			    \fill ({0.75-0.25*\n+(0.5*(1.5+0.5*\i))},-0.65) circle (.5pt);
			   };
			  };
			 \end{tikzpicture}
			 =
			\begin{tikzpicture}[baseline=0,x=.7cm,y=.8cm]
				\node[draw,circle, inner sep=.1cm,fill=white] (a) at (0,0) {$\JW$};
				\draw[blue] (a)--(-1,0);\draw[blue] (-1,-1)--(-1,1);
				\draw[red] (a) ..controls (-.5,.5).. (-.5,1);
				\draw[red] (a) ..controls (-.5,-.5).. (-.5,-1);
				\draw[blue] (a)--(0,1);\draw[blue] (a)--(0,-1);
				\draw[violet] (a)--(1,0); \draw[violet] (1,-1)--(1,1);
				\node at (.5,.75) {\dots};\node at (.5,-.75) {\dots};
			\end{tikzpicture}
		\end{align*}
		Here and below we will use violet to represent a non-specified color among red and blue: in this case
		the actual color depends on the parity of $n$.
		
		These idempotents give the maximal indecomposable summands inside the corresponding Bott-Samelson objects.
		More precisely, let $w\in W$ and let $\uw$ be the (unique) reduced word corresponding to it. 
		Then the idempotent $\JW_{\uw}$ defined above identifies the summand $B_{w}$ of $B_{\uw}$.
			
		Let us define the complex
		$F_{\mathbf{t}_n}$ to be 
		\begin{equation}\label{complex}
		% \begin{tikzcd}
		%  		 				& B_{st} \ar[rdd]\ar[r]	& B_s \ar[rd]\\
		%  B_{sts}\ar[ru]\ar[rd] & \oplus 				& \oplus		& R \\
		%  		 				& B_{ts} \ar[ruu]\ar[r]	& B_t \ar[ru]
		% \end{tikzcd}
		%
		% \begin{tikzcd}
		%	 					& B_{sts}\ar[r]\ar[rdd]	& B_{st} \ar[rdd]\ar[r]	& B_s \ar[rd]\\
		% B_{stst}\ar[ru]\ar[rd] & \oplus 				    & \oplus 				& \oplus		& R \\
		%	 					& B_{tst}\ar[ruu]\ar[r]  & B_{ts} \ar[ruu]\ar[r]	& B_t \ar[ru]
		% \end{tikzcd}
		% 
		% \Dtam{i}{n}=
		\begin{tikzcd}[column sep=small,font=\small]
			 					& B_{\mathbf{t}_{n-1}}(1)\ar[r]\ar[rdd]		  	& B_{\mathbf{t}_{n-2}}(2) \ar[rdd]\ar[r]			& \dots\dots\dots \ar[r]\ar[rdd]	& B_{t}(n-1) \ar[rd]				&		\\
		 B_{\mathbf{t}_n}\ar[ru]\ar[rd]& \oplus		 					   	 	& \oplus			 						& \dots\dots 						&\oplus								& R(n) \\
			 					& B_{\mathbf{s}_{n-1}}(1)\ar[ruu]\ar[r]  	& B_{\mathbf{s}_{n-2}}(2) \ar[ruu]\ar[r]	& \dots\dots\dots \ar[r]\ar[ruu] 	& B_{s}(n-1) \ar[ru]	&
		\end{tikzcd} 
		% \begin{tikzcd}
		%	 					& B_{st\dots ts}\ar[r]\ar[rdd]	& B_{st\dots t} \ar[rdd]\ar[r]	&\dots\dots\dots \ar[r]\ar[rdd]& B_s \ar[rd]\\
		% B_{sts\dots tst}\ar[ru]\ar[rd] & \oplus 				    & \oplus 				&\dots\dots &\oplus		& R \\
		%	 					& B_{ts\dots tst}\ar[ruu]\ar[r]  & B_{t\dots tst} \ar[ruu]\ar[r]	& \dots\dots\dots \ar[r]\ar[ruu] & B_t \ar[ru]
		% \end{tikzcd} 
		\end{equation}
		where each arrow 
		$\phi_{w,u}=B_{w}(n-\ell(w))\rightarrow B_{u}(n-\ell(u))$ 
		is given, in terms of the 
		corresponding Bott-Samelson objects (via the Jones-Wenzl idempotents), 
		as follows:
		\begin{align}\label{diagphi}
			\phi_{\mathbf{t}_k,\mathbf{t}_{k-1}}= \quad &
			 (-1)^{k+1}\,
			 \begin{tikzpicture}[baseline=0.5cm]
			  \JWbox{4}{1}{\ut_{k}}{0}{0}\upstrand{0}{0}{1}{blue}\upstrand{0}{0}{2}{red}\updots{3}\upstrand{0}{0}{5}{violet}\upstrand{0}{0}{6}{violet}\updot{6}{violet}
			  \downstrand{1}{blue}\downstrand{2}{red}\downdots{3}\downstrand{5}{violet}\downstrand{6}{violet}
			 % \downdot{1}{red}\downdot{2}{blue}\downdot{5}{violet}\downdot{6}{violet}
			  \JWbox{3}{1}{\ut_{k-1}}{0}{0.9}\upstrand{0}{0.9}{1}{blue}\upstrand{0}{0.9}{2}{red}\upstrand{0}{0.9}{5}{violet}
			 \end{tikzpicture} &
			 \phi_{\mathbf{t}_k,\mathbf{s}_{k-1}}= \quad &
			  \begin{tikzpicture}[baseline=0.5cm]
			  \JWbox{4}{1}{\ut_{k}}{0}{0}\upstrand{0}{0}{1}{blue}\upstrand{0}{0}{2}{red}\updots{3}\upstrand{0}{0}{5}{violet}\upstrand{0}{0}{6}{violet}\updot{1}{blue}
			  \downstrand{1}{blue}\downstrand{2}{red}\downdots{3}\downstrand{5}{violet}\downstrand{6}{violet}
			 % \downdot{1}{red}\downdot{2}{blue}\downdot{5}{violet}\downdot{6}{violet}
			  \JWbox{3}{1}{\us_{k-1}}{0.36}{0.9}\upstrand{0.36}{0.9}{1}{red}\upstrand{0.36}{0.9}{4}{violet}\upstrand{0.36}{0.9}{5}{violet}
			 \end{tikzpicture}	\notag	 \\ \hfill \\ \notag
			\phi_{\mathbf{s}_k,\mathbf{s}_{k-1}}=\quad &	
			 (-1)^{k+1}\,
			 \begin{tikzpicture}[baseline=0.5cm]
			  \JWbox{4}{1}{\us_{k}}{0}{0}\upstrand{0}{0}{1}{red}\upstrand{0}{0}{2}{blue}\updots{3}\upstrand{0}{0}{5}{violet}\upstrand{0}{0}{6}{violet}\updot{6}{violet}
			  \downstrand{1}{red}\downstrand{2}{blue}\downdots{3}\downstrand{5}{violet}\downstrand{6}{violet}
			 % \downdot{1}{red}\downdot{2}{blue}\downdot{5}{violet}\downdot{6}{violet}
			  \JWbox{3}{1}{\us_{k-1}}{0}{0.9}\upstrand{0}{0.9}{1}{red}\upstrand{0}{0.9}{2}{blue}\upstrand{0}{0.9}{5}{violet}
			 \end{tikzpicture} &
			\phi_{\mathbf{s}_k,\mathbf{t}_{k-1}}= \quad & 
			 \begin{tikzpicture}[baseline=0.5cm]
			  \JWbox{4}{1}{\us_{k}}{0}{0}\upstrand{0}{0}{1}{red}\upstrand{0}{0}{2}{blue}\updots{3}\upstrand{0}{0}{5}{violet}\upstrand{0}{0}{6}{violet}\updot{1}{red}
			  \downstrand{1}{red}\downstrand{2}{blue}\downdots{3}\downstrand{5}{violet}\downstrand{6}{violet}
			 % \downdot{1}{red}\downdot{2}{blue}\downdot{5}{violet}\downdot{6}{violet}
			  \JWbox{3}{1}{\ut_{k-1}}{0.36}{0.9}\upstrand{0.36}{0.9}{1}{blue}\upstrand{0.36}{0.9}{4}{violet}\upstrand{0.36}{0.9}{5}{violet}
			 \end{tikzpicture}  
		\end{align}
		The fact that this is indeed a complex follows from
		the properties of the Jones-Wenzl idempotents (see for example \cite{AMRW_free} or \cite{EW}), 
		which imply in particular that a smaller Jones-Wenzl idempotent is \textit{swallowed} by a bigger one\footnote{%
		Notice that this implies that we could simplify the diagrams for the $\phi$'s in \eqref{diagphi}.
		We displayed them in this way to highlight the considered summands inside the Bott-Samelson object.}. 
		One can define $F_{\mathbf{s}_n}$ similarly (the formulas for the $\phi$'s being the same).
		
		Then we have (see \cite[\S 9]{AMRW_free}):
		\begin{thm}
			For $w\in W$ and $\uw$ its unique reduced word, the minimal subcomplex of $F_{\uw}$ is $F_w$.
		\end{thm}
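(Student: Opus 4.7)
The plan is to follow the strategy of \cite[\S 9]{AMRW_free} and prove the statement by induction on $n = \ell(w)$. The base cases $n = 0, 1$ are immediate since $F_{\uw}$ already has the claimed form. For the inductive step, fix $w$ of length $n$, let $s$ be the first letter of the reduced word $\uw$, so $\uw = s\uw'$ with $\uw'$ the reduced word of $w' = sw$. Then $F_{\uw} = F_s \otimes F_{\uw'}$ and, by induction, $F_{\uw'}$ is homotopy equivalent to the explicit complex $F_{w'}$ built from the indecomposables $B_u$ with $u \le w'$.

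First I would verify that the diagram $F_w$ defined by \eqref{complex} and \eqref{diagphi} is indeed a chain complex. Each pair of composable arrows fits into a parallelogram with two paths from $B_{\mathbf{t}_k}$ (or $B_{\mathbf{s}_k}$) to a smaller $B$-object, passing respectively through the $B_{\mathbf{t}_{k-1}}$ and $B_{\mathbf{s}_{k-1}}$ intermediate summands. When composed, the smaller Jones-Wenzl idempotents are absorbed by the larger ones by the swallowing property, and the two resulting diagrams differ only by the placement of the two dots; the sign $(-1)^{k+1}$ in \eqref{diagphi} is precisely what is needed to produce the cancellation and ensure $d^2 = 0$.

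The main step is constructing the explicit homotopy equivalence $F_w \hookrightarrow F_{\uw}$. The tensor product $F_s \otimes F_{w'}$ is the total complex of a bicomplex whose two rows are shifted copies of $F_{w'}$ and $B_s \otimes F_{w'}$, connected by tensoring with the dot morphism $B_s \to \un(1)$. Each entry $B_s \otimes B_u$, with $u$ a reduced element not starting with $s$, decomposes via the Jones-Wenzl idempotents as $B_{su}$ plus shifted copies of smaller indecomposables; these ``extra'' summands pair up across the two rows into contractible subcomplexes whose connecting differentials are isomorphisms, thanks to the invertibility of the two-colored quantum numbers granted by \eqref{eq_asschr0}. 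Gaussian elimination of these contractible pieces leaves a complex of the shape \eqref{complex}, and tracking the induced differentials reproduces precisely the maps \eqref{diagphi}.

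Finally, minimality of $F_w$ is straightforward: each nonzero entry is an indecomposable $B_u(j)$ with $u \in W$ of length less than $n$, and the differentials are $R$-linear combinations of diagrams factoring through dots and Jones-Wenzl idempotents, so no component is an isomorphism and no summand is contractible. The main obstacle is the inductive step: matching the signs, Tate shifts, and Jones-Wenzl absorption identities governing the decomposition of $B_s \otimes B_u$ requires careful bookkeeping with two-colored quantum numbers, and this is precisely where the assumption \eqref{eq_asschr0} enters essentially.
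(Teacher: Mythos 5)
The paper does not supply a proof of this statement at all: it is imported verbatim with the citation ``(see \cite[\S 9]{AMRW_free})'', and the sentence immediately preceding the theorem makes this explicit. So there is no in-paper argument to compare against; what you have written is a sketch of the argument from the cited reference, and it is broadly the right strategy, namely: verify that $F_w$ is a complex using the Jones--Wenzl absorption identities and the sign $(-1)^{k+1}$, then tensor the inductively known $F_{w'}$ (with $w'=sw$) with $F_s$, decompose the Bott--Samelson summands via the Jones--Wenzl projectors (which exist by \eqref{eq_asschr0}), and perform Gaussian elimination of the redundant pieces, then check minimality by observing that the surviving differentials have no isomorphism components.

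Two small inaccuracies in the inductive step are worth flagging. First, you only discuss the summands $B_s\otimes B_u$ for $u$ not starting with $s$, but half the entries of $F_{w'}$ \emph{do} start with $s$: there one uses instead $B_sB_{\mathbf{s}_k}\cong B_{\mathbf{s}_k}(1)\oplus B_{\mathbf{s}_k}(-1)$, and these shifted copies are an essential part of the cancellation bookkeeping. Second, for $u=\mathbf{t}_k$ not starting with $s$, the decomposition is $B_sB_{\mathbf{t}_k}\cong B_{\mathbf{s}_{k+1}}\oplus B_{\mathbf{s}_{k-1}}$ with \emph{no} shift on the extra summand, so ``shifted copies of smaller indecomposables'' is not quite accurate in that case. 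Neither of these affects the viability of the plan, but they matter for the bookkeeping you rightly identify as the crux. One more tiny slip in the minimality paragraph: $F_w$ does contain one indecomposable of length exactly $n$ (the entry $B_w$ in cohomological degree $0$), not only those of length less than $n$; the minimality argument is unaffected since what matters is that no two entries in adjacent degrees are isomorphic (same indecomposable and same Tate shift) and all differential components have positive degree.
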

		Hence, in particular, the minimal subcomplex of a dominant Wakimoto sheaf are $\Wak{n}=F_{\mathbf{t}_{2n}}$.
		\subsection{Consequences of Soergel's conjecture}
		Our next ingredient is the nice decomposition behavior of the Hecke category in this case, which 
		is encoded in Soergel's conjecture (see \cite{Soe_HC} and \cite{EW}) which was proved in \cite{EWHodge}.
		
		In type $\tilde{A}_1$, as Kazhdan-Lusztig polynomials are all trivial, this just says that 
		\begin{equation}\label{eq_Homfromun}
			\Hom(\un,B_{w})=R(-\ell(w)).					
		\end{equation}
		 Furthermore, a basis for $\Hom(\un,B_{w})$ is given,
		 on the Bott-Samelson level, by the
		 morphism
		 \[
		  \begin{tikzpicture}
		   \JWbox{3}{1}{\uw}{0}{0}\upstrand{0}{0}{1}{violet}\upstrand{0}{0}{2}{violet}\updots{3}
		   \upstrand{0}{0}{5}{violet}
		   \downstrand{1}{violet}\downstrand{2}{violet}\downdots{3}\downstrand{5}{violet}
		   \downdot{1}{violet}\downdot{2}{violet}\downdot{5}{violet}
		  \end{tikzpicture}
		 \]
	 	which has degree $\ell(w)$. In particular the basis of $\Hom(\un,\un)$ is the identity.
		\subsection{Extension groups}\label{subs_extchar0}
		We can now compute the extension groups under our assumptions.
		The complex $\Homb(\un,F_{\mathbf{t}_{n}})$ is, by \eqref{complex}, the following:
	 	\[
		  	\begin{tikzcd}[column sep=tiny, row sep=small,font=\footnotesize]
		 											& \Hom(\un,B_{\mathbf{t}_{n-1}})(1)\ar[r]\ar[rdd]	& %\Hom(\un,B_{\mathbf{t}_{n-1}})(2) \ar[rdd]\ar[r]	& 
		 																							\dots\dots\dots \ar[r]\ar[rdd]	& \Hom(\un,B_{t})(n-1) \ar[rd]					\\
		   \Hom(\un,B_{\mathbf{t}_n})\ar[ru]\ar[rd] 	& \oplus 				    				& %\oplus 							&		
		   																							 \dots\dots 						& \oplus						& \Hom(\un,\un)(n) 	\\
		 											& \Hom(\un,B_{\mathbf{s}_{n-1}})(1)\ar[ruu]\ar[r]  	& %\Hom(\un,B_{\mathbf{s}_{n-2}})(2) \ar[ruu]\ar[r]	& 
		 																							\dots\dots\dots \ar[r]\ar[ruu]	& \Hom(\un,B_{s})(n-1) \ar[ru]
		  \end{tikzcd}
		\]
%		 Now, each object appearing in \eqref{cpxHom} is a graded free left $R$-module of rank one 
%		 (according to 
%		 Soergel's categorification theorem \cite[Theorem 3.17]{EW}
		Hence, by \eqref{eq_Homfromun}, this becomes
	 	\begin{equation}\label{complexR}
		  \begin{tikzcd}
			 					& R(-n+2)\ar[r]\ar[rdd]	& %R(-n+4) \ar[rdd]\ar[r]	& 
			 												\dots\dots\dots \ar[r]\ar[rdd]	& R(n-2) \ar[rd]			\\
		   R(-n)\ar[ru]\ar[rd]	& \oplus 				& % \oplus 					& 
		   													\dots\dots 						& \oplus			& R(n) 	\\
			 					& R(-n+2)\ar[ruu]\ar[r]	& %R(-n+4) \ar[ruu]\ar[r]	& 
			 												\dots\dots\dots \ar[r]\ar[ruu] 	& R(n-2) \ar[ru]
		  \end{tikzcd} 
		\end{equation}
	 	Here, each arrow is the multiplication by a certain polynomial that we now want to determine. 
	% 	By the expression of 
	% 	the morphisms in \eqref{complex}, we know that the composition along any path in \eqref{cpxHom} from 
	% 	$\Hom(\un,B_{w})(n-\ell(w))$ to 
	% 	$\Hom(\un,\un)(n)$  sends the basis element to 
	% 	\begin{equation}\label{JWqithalldots}
	%	  \begin{tikzpicture}
	%	   \JWbox{3}{1}{\uw}{0}{0}
	%	   \upstrand{0}{0}{1}{violet} \upstrand{0}{0}{2}{violet} \upstrand{0}{0}{5}{violet} \updots{3} \updot{1}{violet}\updot{2}{violet}\updot{5}{violet}
	%	   \downstrand{1}{violet} \downstrand{2}{violet} \downstrand{5}{violet} \downdots{3} \downdot{1}{violet}\downdot{2}{violet}\downdot{5}{violet}
	%	  \end{tikzpicture} 	
	% 	\end{equation}
	%	(modulo the sign). We shall use this fact to compute the polynomials giving the morphisms in 
	%	\eqref{complexR}.
	
	 	We have the following recursive formula, which we
	 	shall prove later, for the elements of $R$ obtained by putting dots everywhere above and under a Jones-Wenzl
	 	morphism.
	 	\begin{lem}\label{lem_prodroots}
	 		Let $u<w$ be elements of $W$ with $\ell(w)=\ell(u)+1$. 
	% 		and let $v$ be the only reflection such that $v\le w$ and $v \nleq u$. 
			Then
		  	\begin{equation}\label{recform2}
			   \begin{tikzpicture}[baseline=0.2cm]
			    \JWbox{4}{1}{\uw}{0}{0} 
			    \upstrand{0}{0}{1}{violet} \upstrand{0}{0}{2}{violet} \updots{3} \upstrand{0}{0}{5}{violet} 
			    \upstrand{0}{0}{6}{violet}
			    \downstrand{1}{violet} \downstrand{2}{violet} \downdots{3} \downstrand{5}{violet} \downstrand{6}{violet}
			    \updot{1}{violet}\updot{2}{violet}  
			    \updot{5}{violet}\updot{6}{violet}
			    \downdot{1}{violet}\downdot{2}{violet}  
			    \downdot{5}{violet}\downdot{6}{violet}
			   \end{tikzpicture}
			   =
			   q_{w,u}\,
			   \begin{tikzpicture}[baseline=0.2cm]
			    \JWbox{3}{1}{\uu}{0}{0}
			    \upstrand{0}{0}{1}{violet} \upstrand{0}{0}{2}{violet} \updots{3} 	\upstrand{0}{0}{5}{violet} 
			    \downstrand{1}{violet} 	\downstrand{2}{violet} 	\downdots{3} 	\downstrand{5}{violet}
			    \updot{1}{violet}			\updot{2}{violet}  						\updot{5}{violet}
			    \downdot{1}{violet}		\downdot{2}{violet}   					\downdot{5}{violet}
			   \end{tikzpicture}
		  	\end{equation}
		  	where the coefficient $q_{w,u}\in R$ is as follows:
		  	\begin{enumerate}
		  		\item if $\ell(u)=2r$ is even, then 
		  			\[
		  				q_{w,u}=\begin{cases}
		  							\frac{\qn{r}{s}}{\qn{2r}{s}}\alpha_{\mathbf{s}_n} & \text{if $w=\mathbf{s}_n$},\\[.5em]
		  							\frac{\qn{r}{t}}{\qn{2r}{t}}\alpha_{\mathbf{t}_n} & \text{if $w=\mathbf{t}_n$};\\	  							
		  						\end{cases}
		  			\]
		  		\item if $\ell(u)=2r+1$ is odd, then 
		  			\[
		  				q_{w,u}=\begin{cases}
		  							\frac{\qn{r+1}{t}}{\qn{2r+1}{t}}\alpha_{\mathbf{s}_n} & \text{if $u=\mathbf{t}_n$},\\[.5em]
		  							\frac{\qn{r+1}{s}}{\qn{2r+1}{s}}\alpha_{\mathbf{t}_n} & \text{if $u=\mathbf{s}_n$}.\\	  							
		  						\end{cases}
		  			\]
		  	\end{enumerate} 		
		  	In other words, up to a scalar, the coefficient is the root corresponding to the only 
		  	reflection which is a subword of $\uw$ but not of $\uu$.
		 \end{lem}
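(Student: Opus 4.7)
I would argue by induction on $n:=\ell(w)$. The base case $n=1$ (so $\uu$ is the empty word and the RHS is the barbell-free $JW_{\uu}$, identified with $1$) is immediate from the barbell relation \eqref{barbell}: the LHS collapses to a single dot-dot on a strand, equal to $\alpha_w$, matching case (ii) specialized to $r=0$.

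For the inductive step, the plan is to apply a recursive formula for the two-colored Jones--Wenzl projector expressing $JW_{\uw}$ in terms of $JW_{\uw'}$, where $\uw'$ is obtained from $\uw$ by deleting its last letter. Such formulas are available in \cite{Elias_two} (see also \cite{EMTW}); schematically,
\[
JW_{\uw} \;=\; (JW_{\uw'}\otimes \id)\;-\; \lambda\,(JW_{\uw'}\otimes \id)\circ\varepsilon\circ(JW_{\uw'}\otimes \id),
\]
where $\varepsilon$ is a cap-cup on the last two strands and $\lambda$ is a ratio of two-colored quantum numbers. Substituting into the fully-dotted diagram of \eqref{recform2}, the straight term contributes a fully-dotted $JW_{\uw'}$ multiplied by the barbell $\alpha$ coming from the dot-dot on the newly added strand, to which the induction hypothesis applies directly. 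The correction term, after sliding the endpoint dots past the inner $JW_{\uw'}$'s via the one-color relations and the ``smaller $JW$ is swallowed by a larger one'' property (used already in the definition of the complex \eqref{complex}), reduces to a further scalar multiple of a fully-dotted $JW_{\uu}$ for the other length-$(n-1)$ subword of $\uw$. Combining the two contributions, and simplifying through the product identities $[n][m]_x=\sum_k [m-n+2k-1]_x$ (and its $y$-variant) recalled in \S 2.2, should produce the closed form claimed for $q_{w,u}$.

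The principal obstacle I anticipate is bookkeeping of colors and parities through the recursion: the subscripts of the quantum numbers appearing in $\lambda$ and in the two contributions alternate according to the parity of $\ell(w)$ and the color of the last letter of $\uw$. Accordingly the argument splits into the four sub-cases of the statement, each demanding a careful identification of the surviving coefficient with $\frac{[r]_?}{[2r]_?}\alpha$ or $\frac{[r+1]_?}{[2r+1]_?}\alpha$. Once the quantum-number identities are applied correctly, the interpretation of $\alpha$ as the root of the unique reflection subword of $\uw$ absent from $\uu$ will follow from the description of subwords of alternating reduced expressions in the infinite dihedral group, together with the explicit formulas $\alpha_x=w(\alpha_s)$ (or $w(\alpha_t)$) used in the third realization example.
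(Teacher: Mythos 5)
Your strategy---expand the Jones--Wenzl idempotent via a recursion, put dots everywhere, and simplify through two-colored quantum number identities---is in spirit the same as the paper's. The paper uses the single-$JW$ ``turnback sum'' recursion from \cite[\S 8.2]{AMRW_free} (the formula \eqref{recform}), whose roughly $n$ pitchfork terms all collapse, after dotting, into scalar multiples of the \emph{same} fully-dotted $JW_{\us_n}$; you instead reach for a two-term Wenzl-style recursion with a sandwiched cap-cup. Either is a reasonable starting point.

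The genuine gap is your treatment of the correction term. You assert that, after dotting, it reduces to a scalar multiple of the fully-dotted $JW_{\uu}$ for the \emph{other} length-$(n-1)$ subword of $\uw$ (i.e.\ $\ut_{n-1}$ when $\uw'=\us_{n-1}$). That cannot be right: both terms of your recursion involve only $JW_{\uw'}$, never $JW_{\ut_{n-1}}$, so after dotting the correction term is forced to be a scalar multiple of the fully-dotted $JW_{\uw'}$, not of the fully-dotted $JW_{\ut_{n-1}}$. For $n\ge 3$ these are genuinely different polynomials: by the corollary following the lemma they differ in a root factor ($\alpha_{\mathbf{s}_{n-1}}$ versus $\alpha_{\mathbf{t}_{n-1}}$) and in a quantum binomial normalization. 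Had your claim been true, the recursion would leave you with an expression of the form $A\cdot(\text{dotted }JW_{\us_{n-1}})+B\cdot(\text{dotted }JW_{\ut_{n-1}})$, which neither has the single-multiple shape required by the lemma nor reduces to it without an additional nontrivial identity relating the two. Two further points. First, a single recursion only produces the subcase $u=\uw'$ (delete the last letter); the paper deduces the other three subcases by swapping $s\leftrightarrow t$ and applying a horizontal reflection, not from the recursion. Second, dots cannot ``slide past'' a $JW$ box: they close up into barbell polynomials at the box via \eqref{barbell} and \eqref{dotline}, whereas the ``smaller $JW$ is swallowed by a larger $JW$'' property concerns compositions of idempotents, not dot absorption.
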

	 	Let us assume this lemma and go back to the complex \eqref{complexR}.
	 	The arrow $\Hom(\un,B_w)\rightarrow\Hom(\un,B_u)$ is given by a polynomial $p_{w,u}$ such that
	 	\[
			 \begin{tikzpicture}[baseline=0.5cm]
			  \JWbox{4}{1}{\uw}{0}{0}\upstrand{0}{0}{1}{violet}\upstrand{0}{0}{2}{violet}\updots{3}\upstrand{0}{0}{5}{violet}\upstrand{0}{0}{6}{violet}
			  \downstrand{1}{violet}\downstrand{2}{violet}\downdots{3}\downstrand{5}{violet}\downstrand{6}{violet}
			  \downdot{1}{violet}\downdot{2}{violet}\downdot{5}{violet}\downdot{6}{violet}
			 % \downdot{1}{red}\downdot{2}{blue}\downdot{5}{violet}\downdot{6}{violet}
			 \draw (0,.9)--({0.36*(4+2*1)},.9)--({0.36*(4+2*1)},.9+0.6)--(0,.9+0.6)--cycle;
	%					 \draw ({-0.18*4},-0.3)--({-0.18*4},0.3)--({0.18*4},0.3)--({0.18*4},-0.3)--cycle;
						 \node at ({0.18*(4+2*1)},.9+0.3) (phi) {$\phi_{w,u}$};
	%		  \JWbox{3}{1}{\us_{k-1}}{0}{0.9}
				\upstrand{0}{0.9}{1.5}{violet}\upstrand{0}{0.9}{2.5}{violet}\upstrand{0}{0.9}{5.5}{violet}
				\begin{scope}[yshift=.9cm]
					\updots{3.5}
				\end{scope}
			 \end{tikzpicture}
			 =
			 p_{w,u}\,
			  \begin{tikzpicture}[baseline=0.2cm]
			   \JWbox{3}{1}{\uu}{0}{0}\upstrand{0}{0}{1}{violet}\upstrand{0}{0}{2}{violet}\updots{3}
			   \upstrand{0}{0}{5}{violet}
			   \downstrand{1}{violet}\downstrand{2}{violet}\downdots{3}\downstrand{5}{violet}
			   \downdot{1}{violet}\downdot{2}{violet}\downdot{5}{violet}
			  \end{tikzpicture}
	 	\]
	 	Now it suffices to add dots on all top strands, use the expressions of $\phi_{w,u}$ in \eqref{diagphi}, and then compare the result with the formula of the lemma
	 	to get
	 	\begin{align*}
	 		&(-1)^{k+1} p_{\mathbf{t}_k,\mathbf{t}_{k-1}} = q_{\mathbf{t}_k,\mathbf{t}_{k-1}}& 				&p_{\mathbf{t}_k,\mathbf{s}_{k-1}} = q_{\mathbf{t}_k,\mathbf{s}_{k-1}} \\
	 		&(-1)^{k+1} p_{\mathbf{s}_k,\mathbf{s}_{k-1}} =  q_{\mathbf{s}_k,\mathbf{s}_{k-1}}& 	&p_{\mathbf{s}_k,\mathbf{t}_{k-1}} = q_{\mathbf{s}_k,\mathbf{t}_{k-1}} 
	 	\end{align*} 		
	 	We can now compute the cohomology of the complex \eqref{complexR}: %(i.e.\ $\Hom_{\Kb(\D)}(\un[0],F_{\mathbf{s}_n}[j])$): 
	 	suppose first that $n$ is even (which is the case of Wakimoto sheaves). 
	 	Now, take $1<k<n-1$. If $k$ is even and $n-k=2r$, the complex at degree $k$ looks as follows:
	 	\[
	 		\begin{tikzcd}
	 			R(-n+2k-2)^{\oplus 2} \ar[r,"f"] & R(-n+2k)^{\oplus 2} \ar[r,"g"] & R(-n+2k+2)^{\oplus 2},
	 		\end{tikzcd} 	
	 	\]
	 	where
	 	\begin{align*}
	 		f=& \begin{pmatrix} \frac{\qn{r}{t}}{\qn{2r}{t}}\alpha_{\mathbf{t}_{n-k+1}} & \frac{\qn{r}{s}}{\qn{2r}{s}}\alpha_{\mathbf{s}_{n-k+1}} \\[.5em] \frac{\qn{r}{t}}{\qn{2r}{t}}\alpha_{\mathbf{t}_{n-k+1}} & \frac{\qn{r}{s}}{\qn{2r}{s}}\alpha_{\mathbf{s}_{n-k+1}} 	\end{pmatrix}, \\
	 		g=& \begin{pmatrix} -\frac{\qn{r}{t}}{\qn{2r-1}{t}}\alpha_{\mathbf{s}_{n-k-1}} & \frac{\qn{r}{t}}{\qn{2r-1}{t}}\alpha_{\mathbf{s}_{n-k-1}} \\[.5em] \frac{\qn{r}{s}}{\qn{2r-1}{s}}\alpha_{\mathbf{t}_{n-k-1}} & -\frac{\qn{r}{s}}{\qn{2r-1}{s}}\alpha_{\mathbf{t}_{n-k-1}} \end{pmatrix}.
	 	\end{align*}
	 	The cohomology at degree $k$ is then
	 	\[
	 		\begin{split}
		 		\frac{R}{\bigg(\frac{\qn{r}{s}}{\qn{2r}{s}}\alpha_{\mathbf{s}_{n-k+1}},%
		 			\frac{\qn{r}{t}}{\qn{2r}{t}}\alpha_{\mathbf{t}_{n-k+1}}\bigg)} (-n+2k)%
		 				& = \frac{R}{(\alpha_{\mathbf{s}_{n-k+1}}, \alpha_{\mathbf{t}_{n-k+1}})} (-n+2k) \\ 
		 				& = \frac{R}{(\alpha_s,\alpha_t)}(-n+2k). 	
		 	\end{split}
	 	\]
	 	If $\alpha_s$ and $\alpha_t$ generate $\mathfrak{h}^*$ then this is $\kk(-n+2k)$.
	 	
		If instead $k$ is odd and $n-k=2r+1$, the complex at degree $k$ has the form
	 	\[
	 		\begin{tikzcd}
	 			R(-n+2k-2)^{\oplus 2} \ar[r,"f"] & R(-n+2k)^{\oplus 2} \ar[r,"g"] & R(-n+2k+2)^{\oplus 2}
	 		\end{tikzcd} 	
	 	\]
	 	where
	 	\begin{align*}
	 		f=& \begin{pmatrix} -\frac{\qn{r+1}{t}}{\qn{2r+1}{t}}\alpha_{\mathbf{s}_{n-k}} & \frac{\qn{r+1}{t}}{\qn{2r+1}{t}}\alpha_{\mathbf{s}_{n-k}} \\[.5em] \frac{\qn{r+1}{s}}{\qn{2r+1}{s}}\alpha_{\mathbf{t}_{n-k}} & -\frac{\qn{r+1}{s}}{\qn{2r+1}{s}}\alpha_{\mathbf{t}_{n-k}} \end{pmatrix} \\
	 		g=& \begin{pmatrix} \frac{\qn{r}{t}}{\qn{2r}{t}}\alpha_{\mathbf{t}_{n-k}} & \frac{\qn{r}{s}}{\qn{2r}{s}}\alpha_{\mathbf{s}_{n-k}} \\[.5em] \frac{\qn{r}{t}}{\qn{2r}{t}}\alpha_{\mathbf{t}_{n-k}} & \frac{\qn{r}{s}}{\qn{2r}{s}}\alpha_{\mathbf{s}_{n-k}} 	\end{pmatrix} 
	 	\end{align*}
	 	and hence the cohomology at degree $k$ is 0.
	 	It is easy to adapt the above computation to the cases $k=0,1$ and $k=n-1,n$.
	 	The results are summarized in the following table (where empty cells are zeroes).
	  \begin{equation*}
	   \begin{array}{|c|c|c|c|c|c|c|c|c|c|c|c}
	   \hline
	    \hbox{\diagbox{$n$}{$j$}}							 	& 0 	& 1 																	& 2 								& 3 								& 4 								& 5 								& 6 								& 7									& 8									& 9						& \dots			\\
	    \hline   																																																																																																		
	    0 							& R		&																		& 									&									&									&									&									&									&									&								&	\\
	    \hline   																																																																																
	%    1								&		& \frac{R}{(\alpha_{t})}(1)												& 									&									&									&									&									&									&									&							&		\\
	%    \hline   																																																																																																		
	    2								&		&																		& \kk(2)	&									&									&									&									&									&									&								&	\\
	    \hline   																																																																																																		
	%    3								&		& \frac{R}{\Big(\frac{\qn{1}{t}}{\qn{2}{t}}\alpha_{\mathbf{t}_{3}}\Big)}(-1)	& 									& \kk(3)	&									&									&									&									&									&								&	\\
	%    \hline   																																																																																																		
	    4								&		&																		& \kk(0)		&									& \kk(4)	&									&									&									&									&							&		\\
	    \hline   																																																																																																		
	%    5								&		& \frac{R}{\Big(\frac{\qn{2}{t}}{\qn{4}{t}}\alpha_{\mathbf{t}_{5}}\Big)}(-3)	& 									& \kk(1)	&									& \kk(5)	&									&									&									&								&	\\
	%    \hline   																																																																																																		
	    6								&		&																		& \kk(-2)	&									& \kk(2)	&									& \kk(6)  &									&									&							&		\\
	    \hline																																																																																																			
	%    7								&		& \frac{R}{\Big(\frac{\qn{3}{t}}{\qn{6}{t}}\alpha_{\mathbf{t}_{7}}\Big)}(-5)	& 									& \kk(-1)	&									& \kk(3)	&									& \kk(7)	&									&									\\
	%	\hline																																																																																																			
		8								&		& 																		& \kk(-4)	&									& \kk(0)		&									& \kk(4)	&									& \kk(8)	&							&		\\
		\hline																																																																																																			
	%	9								&		& \frac{R}{\Big(\frac{\qn{4}{1}}{\qn{8}{1}}\alpha_{\mathbf{t}_{9}}\Big)}(-7)	&									& \kk(-3)	&									& \kk(1)	&									& \kk(5)	&									& \kk(9)	\\
	    \dots 							& \dots & \dots 																& \dots 							& \dots 							& \dots 							& \dots 							& \dots								& \dots								& \dots 							& \dots					& \dots			\\
	   \end{array}
	  \end{equation*}
	  (for simplicity, the table corresponds to the case $R/(\alpha_s,\alpha_t)=\kk$).
	  
	  When $n$ is odd we can proceed similarly and obtain an analogous parity vanishing of the cohomology
	  (in this case in even degrees),
	  but we get something different for $k=1$. In fact, if
	 $n=2r+1$ is odd, the beginning of the complex has the form
	 	\begin{center}
	 		\begin{tikzpicture}
	 			\def \d{4cm}
	 			\node (k0) at (0,0) {$R(-n)$}; \node (k1) at (.8*\d,0) {$R(-n+2)^{\oplus 2}$}; \node (k2) at (2.5*\d,0) {$R(-n+4)^{\oplus 2}$};
	 			\draw[->] (k0) to node[above] {$\begin{pmatrix} \frac{\qn{r}{t}}{\qn{2r}{t}}\alpha_{\mathbf{t}_{n}} \\[.5em] \frac{\qn{r}{t}}{\qn{2r}{t}}\alpha_{\mathbf{t}_{n}}	\end{pmatrix}$} (k1); 
	 			\draw[->] (k1) to node[above] {$\begin{pmatrix} -\frac{\qn{r}{t}}{\qn{2r-1}{t}}\alpha_{\mathbf{s}_{n-2}} & \frac{\qn{r}{t}}{\qn{2r-1}{t}}\alpha_{\mathbf{s}_{n-2}} \\[.5em] \frac{\qn{r}{s}}{\qn{2r-1}{s}}\alpha_{\mathbf{t}_{n-2}} & -\frac{\qn{r}{s}}{\qn{2r-1}{s}}\alpha_{\mathbf{t}_{n-2}} \end{pmatrix}$} (k2); 
	 		\end{tikzpicture} 	
	 	\end{center}
	 	which gives zero cohomology in degree 0 and $R/(\alpha_{\mathbf{t}_n})(-n+2)$ in degree 1. Hence the table one gets is the following.
	  \begin{equation*}
	   \begin{array}{|c|c|c|c|c|c|c|c|c|c|c|c}
	   	\hline
	    \hbox{\diagbox{$n$}{$j$}}				 	& 0 	& 1 																	& 2 								& 3 								& 4 								& 5 								& 6 								& 7									& 8									& 9						& \dots			\\
	    \hline   																																																																																																		
	    1								&		& \frac{R}{(\alpha_{t})}(1)												& 									&									&									&									&									&									&									&								&	\\
	    \hline   																																																																																																		
	    3								&		& \frac{R}{(\alpha_{\mathbf{t}_{3}})}(-1)	& 									& \kk(3)	&									&									&									&									&									&								&	\\
	    \hline   																																																																																																		
	    5								&		& \frac{R}{(\alpha_{\mathbf{t}_{5}})}(-3)	& 									& \kk(1)	&									& \kk(5)	&									&									&									&									& \\
	    \hline   																																																																																																		
	    7								&		& \frac{R}{(\alpha_{\mathbf{t}_{7}})}(-5)	& 									& \kk(-1)	&									& \kk(3)	&									& \kk(7)	&									&								&	\\
		\hline																																																																																																			
		9								&		& \frac{R}{(\alpha_{\mathbf{t}_{9}})}(-7)	&									& \kk(-3)	&									& \kk(1)	&									& \kk(5)	&									& \kk(9) &	\\
		\hline																																																																																																			
	    \dots 							& \dots & \dots 																& \dots 							& \dots 							& \dots 							& \dots 							& \dots								& \dots								& \dots 							& \dots						& \dots		\\
	   \end{array}
	  \end{equation*}
	  	One has, of course, similar tables for 
	 	$\Homb(\un,F_{\mathbf{s}_n})$.
		 We now turn to the proof of the recursive formula.
	 	\begin{proof}[Proof of Lemma \ref{lem_prodroots}] 
	 	We will suppose $w=\mathbf{s}_{n+1}$ and $u=\mathbf{s}_n$. The other cases can then be obtained by swapping $s$ and $t$ and/or applying horizontal reflection.
	 	
	 	For convenience, let $\alldJW_{n}$ denote the polynomial obtained by putting dots everywhere 
	 	above and under $JW_{\us_n}$:
	 	\[
	 		\alldJW_n:=
			   		\begin{tikzpicture}[baseline=0.2cm]
			    		\JWbox{3}{1}{\us_n}{0}{0} 
			    		\upstrand{0}{0}{1}{red} \upstrand{0}{0}{2}{blue} \updots{3} \upstrand{0}{0}{5}{violet} 
			    		\downstrand{1}{red} \downstrand{2}{blue} \downdots{3} \downstrand{5}{violet}
			    		\updot{1}{red}\updot{2}{blue}  
			    		\updot{5}{violet}
			    		\downdot{1}{red}\downdot{2}{blue}  
			    		\downdot{5}{violet}
			   		\end{tikzpicture}
	 	\]
	 	So we want to show that
	 	\[
	 		\alldJW_{n+1}=q_{\mathbf{s}_{n+1},\mathbf{s}_n}\alldJW_n
	 	\]
	  	We shall use the symbol 
	  	\[
	   		\ud{k}:=
		   \begin{cases}
		    s & \text{if $k$ is odd}\\ 
		    t & \text{otherwise}
		   \end{cases}
		 \]
	 	The lemma is a direct consequence of a recursive formula for Jones-Wenzl morphisms, which can
	 	be found in \cite[\S 8.2]{AMRW_free} (for clarity we put a numbering over the strands):
	 	\begin{multline}\label{recform}
	   		\begin{tikzpicture}[baseline=0.2cm]
	    		\JWbox{4}{1}{\us_{n+1}}{0}{0} 
	   	 		\uplongstrand{1}{red} \uplongstrand{2}{blue} \updots{3} \uplongstrand{5}{violet} \uplongstrand{6}{violet}
	    		\downlongstrand{1}{red} \downlongstrand{2}{blue} \downdots{3} \downlongstrand{5}{violet} \downlongstrand{6}{violet}
	    		\uppernode{1}{$1$}\uppernode{2}{$2$}  
	    		\uppernode{5}{$n$}\uppernode{6}{$n+1$}
	   		\end{tikzpicture}
	   		=
	   		\begin{tikzpicture}[baseline=0.2cm]
	    		\JWbox{3}{1}{\us_n}{0}{0}
	    		\uplongstrand{1}{red} \uplongstrand{2}{blue} \updots{3} \uplongstrand{5}{violet} 
	    		\downlongstrand{1}{red} \downlongstrand{2}{blue} \downdots{3} \downlongstrand{5}{violet}
	    		\longstrand{6}{violet}
	    		\uppernode{1}{$1$}\uppernode{2}{$2$}  
	    		\uppernode{5}{$n$}\uppernode{6}{$n+1$}
	   		\end{tikzpicture}
	   		+\frac{\qn{1}{t}}{\qn{n}{\ud{n+1}}}
	   		\begin{tikzpicture}[baseline=0.2cm]  
	    		\JWbox{4}{1}{\us_n}{0}{0}
	    		\upleftpitchfork{red}{blue} \uplongstrand{2}{blue} \updots{3} \uprightpitchfork{violet}{violet}{6}
	    		\downstrand{1}{red}\downdot{1}{red} \downlongstrand{2}{blue} \downdots{3} \downlongstrand{5}{violet} \downlongstrand{6}{violet}
	    		\uppernode{0}{$1$}\uppernode{0.5}{$2$}%\uppernode{1}{$1$}\uppernode{2}{$2$}
	    		\uppernode{7}{$n+1$}
	   		\end{tikzpicture}
	   		+\\+\sum_{a=2}^{n-2}\frac{\qn{a}{\ud{a+1}}}{\qn{n}{\ud{n+1}}}
	   		\begin{tikzpicture}[baseline=0.2cm]
	    		\JWbox{5}{2}{\us_n}{0}{0}
	    		\uplongstrand{1}{red} \uplongstrand{2}{blue} \updots{3} \uppitchfork{5}{violet}{violet} \updots{6}
	    		\uprightpitchfork{violet}{violet}{9}
	    		\downlongstrand{1}{red} \downlongstrand{2}{blue} \downdots{3}\downdots{4.5}\downdots{6}
	    		\downlongstrand{8}{violet} \downlongstrand{9}{violet}
	    		\uppernode{1}{$1$}\uppernode{2}{$2$}\uppernode{4}{$a$}\uppernode{5}{$a+1$}\uppernode{6}{$a+2$}
	    		\uppernode{10}{$n+1$}
	   		\end{tikzpicture}
	   		+\frac{\qn{n-1}{\ud{n}}}{\qn{n}{\ud{n+1}}}
	   		\begin{tikzpicture}[baseline=0.2cm]
	    		\JWbox{4}{1}{\us_n}{0}{0}
	    		\uplongstrand{1}{red} \uplongstrand{2}{blue} \updots{3} \uprightdoublepitchfork{violet}{violet}{6}
	    		\downlongstrand{1}{red} \downlongstrand{2}{blue} \downdots{3} \downlongstrand{5}{violet} \downlongstrand{6}{violet}
	    		\uppernode{1}{$1$}\uppernode{2}{$2$}\uppernode{5}{$n-1$}\uppernode{6}{$n$}\uppernode{7}{$n+1$}
	   		\end{tikzpicture}  
	  	\end{multline}
	  	If we put dots everywhere in \eqref{recform}, %(i.e.\ we compose with the morphisms given by all dots 
	  	%on top and on bottom):
	  	we obtain
	  	\begin{multline*}
	   		\begin{tikzpicture}[baseline=0.2cm]
	    		\JWbox{4}{1}{\us_{n+1}}{0}{0} 
	   	 		\uplongstrand{1}{red} \uplongstrand{2}{blue} \updots{3} \uplongstrand{5}{violet} \uplongstrand{6}{violet}
	    		\downlongstrand{1}{red} \downlongstrand{2}{blue} \downdots{3} \downlongstrand{5}{violet} \downlongstrand{6}{violet}
	    		\doubleupdot{1}{red}\doubleupdot{2}{blue}  
	    		\doubleupdot{5}{violet}\doubleupdot{6}{violet}
	    		\doubledowndot{1}{red}\doubledowndot{2}{blue}  
	    		\doubledowndot{5}{violet}\doubledowndot{6}{violet}
	   		\end{tikzpicture}
	   		=
	   		\begin{tikzpicture}[baseline=0.2cm]
	    		\JWbox{3}{1}{\us_n}{0}{0} 
	    		\uplongstrand{1}{red} \uplongstrand{2}{blue} \updots{3} \uplongstrand{5}{violet} 
	    		\downlongstrand{1}{red} \downlongstrand{2}{blue} \downdots{3} \downlongstrand{5}{violet}
	    		\longstrand{6}{violet}
	    		\doubleupdot{1}{red}\doubleupdot{2}{blue}  
	    		\doubleupdot{5}{violet}\doubleupdot{6}{violet}
	    		\doubledowndot{1}{red}\doubledowndot{2}{blue}  
	    		\doubledowndot{5}{violet}\doubledowndot{6}{violet}
	    		\uppernode{6}{$n+1$}
	   		\end{tikzpicture}
	   		+\frac{[1]_t}{\qn{n}{\ud{n+1}}}
	   		\begin{tikzpicture}[baseline=0.2cm]  
	    		\JWbox{4}{1}{\us_n}{0}{0} 
	    		\upleftpitchfork{red}{blue} \uplongstrand{2}{blue} \updots{3} \uprightpitchfork{violet}{violet}{6}
	    		\downstrand{1}{red}\downdot{1}{red} \downlongstrand{2}{blue} \downdots{3} \downlongstrand{5}{violet} \downlongstrand{6}{violet}
	    		\doubleupdot{0}{red}\doubleupdot{0.5}{blue}\doubleupdot{1}{red}\doubleupdot{2}{blue}
	    		\doubleupdot{7}{violet}
	    		\doubledowndot{0}{red}\doubledowndot{2}{blue}\doubledowndot{5}{violet}\doubledowndot{6}{violet}\doubledowndot{7}{violet}
	    		\uppernode{0.5}{$2$}
	   		\end{tikzpicture}
	   		+\\+\sum_{a=2}^{n-2}\frac{\qn{a}{\ud{a+1}}}{\qn{n}{\ud{n+1}}}
	   		\begin{tikzpicture}[baseline=0.2cm]
	    		\JWbox{5}{2}{\us_n}{0}{0} 
	    		\uplongstrand{1}{red} \uplongstrand{2}{blue} \updots{3} \uppitchfork{5}{violet}{violet} \updots{6}
	    		\uprightpitchfork{violet}{violet}{9}
	    		\downlongstrand{1}{red} \downlongstrand{2}{blue} \downdots{3}\downdots{4.5}\downdots{6}
	    		\downlongstrand{8}{violet} \downlongstrand{9}{violet}
	    		\doubleupdot{1}{red}\doubleupdot{2}{blue}\doubleupdot{4.5}{violet}\doubleupdot{5}{violet}\doubleupdot{5.5}{violet}
	    		\doubleupdot{10}{violet}
	    		\doubledowndot{1}{red}\doubledowndot{2}{blue}\doubledowndot{8}{violet}\doubledowndot{9}{violet}\doubledowndot{10}{violet}
	    		\uppernode{5}{$a+1$}
	   		\end{tikzpicture}
	   		+\frac{\qn{n-1}{\ud{n}}}{\qn{n}{\ud{n+1}}}
	   		\begin{tikzpicture}[baseline=0.2cm]
	    		\JWbox{4}{1}{\us_n}{0}{0} 
	    		\uplongstrand{1}{red} \uplongstrand{2}{blue} \updots{3} \uprightdoublepitchfork{violet}{violet}{6}
	    		\downlongstrand{1}{red} \downlongstrand{2}{blue} \downdots{3} \downlongstrand{5}{violet} \downlongstrand{6}{violet}
	    		\doubleupdot{1}{red}\doubleupdot{2}{blue}\doubleupdot{5}{violet}\doubleupdot{6}{violet}\doubleupdot{7}{violet}
	    		\doubledowndot{1}{red}\doubledowndot{2}{blue}\doubledowndot{5}{violet}\doubledowndot{6}{violet}\doubledowndot{7}{violet}
	    		\uppernode{6}{$n$}
	   		\end{tikzpicture}  
	  	\end{multline*}
	  	After applying relations \eqref{barbell} and \eqref{dotline}, we see that all the terms on the right
	  	are multiples of $\alldJW_n$ and we get
	  	\[	
	  		\begin{split}
	%	   		\begin{tikzpicture}[baseline=0.2cm]
	%	    		\JWbox{4}{1}{\us_{n+1}}{0}{0} 
	%	    		\upstrand{0}{0}{1}{red} \upstrand{0}{0}{2}{blue} \updots{3} \upstrand{0}{0}{5}{violet} \upstrand{0}{0}{6}{violet}
	%	    		\downstrand{1}{red} \downstrand{2}{blue} \downdots{3} \downstrand{5}{violet} \downstrand{6}{violet}
	%	    		\updot{1}{red}\updot{2}{blue}  
	%	    		\updot{5}{violet}\updot{6}{violet}
	%	    		\downdot{1}{red}\downdot{2}{blue}  
	%	    		\downdot{5}{violet}\downdot{6}{violet}
	%	   		\end{tikzpicture}
		   		\alldJW_{n+1}	
				&=
			   		\bigg(\alpha_{\ud{n+1}}+\frac{\qn{1}{t}}{\qn{n}{\ud{n+1}}}\alpha_{t}
			   		+\sum_{a=2}^{n-2}\frac{\qn{a}{\ud{a+1}}}{\qn{n}{\ud{n+1}}}\alpha_{\ud{a+1}}
			   		+\frac{\qn{n-1}{\ud{n}}}{\qn{n}{\ud{n+1}}}\alpha_{\ud{n}}\bigg)\alldJW_n
	%		   		\,
	%		   		\begin{tikzpicture}[baseline=0.2cm]
	%		    	\JWbox{3}{1}{\us_n}{0}{0}
	%				    \upstrand{0}{0}{1}{red} \upstrand{0}{0}{2}{blue} \updots{3} \upstrand{0}{0}{5}{violet} 
	%				    \downstrand{1}{red} \downstrand{2}{blue} \downdots{3} \downstrand{5}{violet}
	%				    \updot{1}{red}\updot{2}{blue}  
	%				    \updot{5}{violet}
	%				    \downdot{1}{red}\downdot{2}{blue}  
	%				    \downdot{5}{violet}
	%		   		\end{tikzpicture}%
			   		\\
			   	& = \bigg( %
			   		\frac{\qn{1}{t}\alpha_t+\qn{2}{s}\alpha_s+\dots+ %
			   			\qn{n-1}{\ud{n}}\alpha_{\ud{n}}+\qn{n}{\ud{n+1}}\alpha_{\ud{n+1}}}{\qn{n}{\ud{n+1}}} %
			   			\bigg)\alldJW_n
	%		   	\frac{\qn{2}{s}}{\qn{n}{\ud{n+1}}}+\frac{\qn{4}{s}}{\qn{n}{\ud{n+1}}}+\dots \bigg)\alpha_{s}
	%		   		+\bigg( \frac{\qn{1}{t}}{\qn{n}{\ud{n+1}}}+\frac{\qn{3}{t}}{\qn{n}{\ud{n+1}}}+\dots \bigg)\alpha_{t}\Bigg)
	%		   		\,
	%		   		\begin{tikzpicture}[baseline=0.2cm]
	%		    		\JWbox{3}{1}{\us_n}{0}{0}
	%				    \upstrand{0}{0}{1}{red} \upstrand{0}{0}{2}{blue} \updots{3} \upstrand{0}{0}{5}{violet} 
	%				    \downstrand{1}{red} \downstrand{2}{blue} \downdots{3} \downstrand{5}{violet}
	%				    \updot{1}{red}\updot{2}{blue}  
	%				    \updot{5}{violet}
	%				    \downdot{1}{red}\downdot{2}{blue}  
	%				    \downdot{5}{violet}
	%		   		\end{tikzpicture}
		   	\end{split}
	  	\]
	  	Now suppose that $n=2r$ is even: if we split the sum according to the color and we use 
	  	the properties of the two-colored quantum numbers, the coefficient becomes
	  % if $n$ is even and $n=2r+1$ if it is odd.  
	  	\[
	  		\begin{split}
			   \bigg( \frac{\qn{2}{s}+\qn{4}{s}+\dots +\qn{2r}{s}}{\qn{2r}{s}} \bigg)
			    & \alpha_{s} +\bigg(\frac{\qn{1}{t}+\qn{3}{t}+\dots +\qn{2r-1}{t}}{\qn{2r}{s}}\bigg)\alpha_{t}	\\
			   	& = \bigg(\frac{\qn{r}{s}\qn{r+1}{s}}{\qn{2r}{s}}\bigg)\alpha_{s}
				   +\bigg(\frac{\qn{r}{s}\qn{r}{t}}{\qn{2r}{s}}\bigg)\alpha_{t} \\
			  	& = \frac{\qn{r}{s}}{\qn{2r}{s}}\big(\qn{r+1}{s}\alpha_{s}+\qn{r}{t}\alpha_{t}\big)
			   	=\frac{\qn{r}{s}}{\qn{2r}{s}}\alpha_{\mathbf{s}_{n+1}}
			\end{split}
	  	\]
	  	If instead $n=2r+1$ is odd then it is
	  	\[
	  		\begin{split}	
			   \bigg(\frac{\qn{2}{s}+\qn{4}{s}+\dots +\qn{2r}{s}}{\qn{2r+1}{t}}\bigg)
			   	& \alpha_{s} +\bigg(\frac{\qn{1}{t}+\qn{3}{t}+\dots +\qn{2r+1}{t}}{\qn{2r+1}{t}}\bigg)\alpha_{t}	\\
		   		& = \bigg(\frac{\qn{r}{s}\qn{r+1}{s}}{\qn{2r+1}{t}}\bigg)\alpha_{s}
				   +\bigg(\frac{\qn{r+1}{s}\qn{r+1}{t}}{\qn{2r+1}{t}}\bigg)\alpha_{t} 	\\
		  		& = \frac{\qn{r+1}{s}}{\qn{2r+1}{s}}\big(\qn{r}{s}\alpha_{s}+\qn{r+1}{t}\alpha_{t}\big)
				   =\frac{\qn{r+1}{s}}{\qn{2r+1}{s}} \alpha_{\mathbf{t}_n}  
			\end{split}
	  	\]
	  	This concludes the proof.
	 	\end{proof} 
	 	Let us observe that
	 	\[
		  \frac{\qn{1}{s}}{\qn{1}{s}}\frac{\qn{2}{s}}{\qn{1}{s}}\frac{\qn{3}{s}}{\qn{2}{s}}
		  \frac{\qn{4}{s}}{\qn{2}{s}}\dots \frac{\qn{2r}{s}}{\qn{r}{s}}=\qbc{2r}{r}{s}
		\]
		and that
	 	\[
		  \frac{\qn{1}{s}}{\qn{1}{s}}\frac{\qn{2}{s}}{\qn{1}{s}}\frac{\qn{3}{s}}{\qn{2}{s}}
		  \frac{\qn{4}{s}}{\qn{2}{s}}\dots \frac{\qn{2r+1}{s}}{\qn{r+1}{s}}=\qbc{2r+1}{r+1}{s} 
	 	\]
		and similarly for $t$. Hence, by induction from the lemma, one 
	 	obtains the following formulas for the everywhere-dotted Jones-Wenzl morphisms.
	 	\begin{cor}
	 	Let $T$ be the set of reflections in $W$. Then, for $n\ge 0$, one has:
	  	\[
	  	% %%%%%%%%%%%%%%%%%%%%%% Code for JWn (opts: colors, dots) %%%%%%%%%%%%%%%%%%%%%%%%%%%%%%%%%%%% 
	  	\qbc{n-1}{\lceil \frac{n-1}{2}\rceil}{s}\,\,
	  	\begin{tikzpicture}[scale=.7,baseline=-2]
		   \foreach \n in {5}{
		    \node at (0,0) (JW) {$JW_{\us_n}$};
		    \ifnum \n=1 \draw ({-0.5},-0.4)--({-0.5},0.4)--({0.5},0.4)--({0.5},-0.4)--cycle;
		    	[\else \draw ({-0.25*\n},-0.4)--({-0.25*\n},0.4)--({0.25*\n},0.4)--({0.25*\n},-0.4)--cycle;]\fi
	%  %%%%%%%% Uncolored option %%%%%%%%% 
	%  \foreach \i in {1,...,\n}{
	%   \draw ({0.25-0.25*\n+(0.5*(\i-1))},0.4) -- ++(0,0.5);
	%   \draw ({0.25-0.25*\n+(0.5*(\i-1))},-0.4) -- ++(0,-0.5);
	%   };
	%%  %%%%%%%% Dots %%%%%%%%%%%
	%%  \foreach \i in {1,...,\n}{
	%%    \fill ({0.25-0.25*\n+(0.5*(\i-1))},0.9) circle (2pt);
	%%    \fill ({0.25-0.25*\n+(0.5*(\i-1))},-0.9) circle (2pt);
	%%   };
	  %%%%%%%% Colored option (put a \not in the if condition to switch colors) %%%%%%%%% 
		   \foreach \i in {1,2}{
		    \ifthenelse{ \isodd{\i}}
		    	{\draw[red] ({0.25-0.25*\n+(0.5*(\i-1))},0.4) -- ++(0,0.5);
		    	\draw[red] ({0.25-0.25*\n+(0.5*(\i-1))},-0.4) -- ++(0,-0.5);
		  %%%%%%%% Dots %%%%%%%% 
				\fill[red] ({0.25-0.25*\n+(0.5*(\i-1))},0.9) circle (2pt);
				\fill[red] ({0.25-0.25*\n+(0.5*(\i-1))},-0.9) circle (2pt);
				}
		    	{\draw[blue] ({0.25-0.25*\n+(0.5*(\i-1))},0.4) -- ++(0,0.5);
		    	\draw[blue] ({0.25-0.25*\n+(0.5*(\i-1))},-0.4) -- ++(0,-0.5);
		  %%%%%%%% Dots %%%%%%%% 
				\fill[blue] ({0.25-0.25*\n+(0.5*(\i-1))},0.9) circle (2pt);
				\fill[blue] ({0.25-0.25*\n+(0.5*(\i-1))},-0.9) circle (2pt);
				}
		   };
		   \foreach \i in {\n}{
		    \draw[violet] ({0.25-0.25*\n+(0.5*(\i-1))},0.4) -- ++(0,0.5);
		    \draw[violet] ({0.25-0.25*\n+(0.5*(\i-1))},-0.4) -- ++(0,-0.5);
		  %%%%%%%% Dots %%%%%%%% 
				\fill[violet] ({0.25-0.25*\n+(0.5*(\i-1))},0.9) circle (2pt);
				\fill[violet] ({0.25-0.25*\n+(0.5*(\i-1))},-0.9) circle (2pt);
		   };
		   \foreach \i in {1,2,3}{
		    \fill ({0.25-0.25*\n+(0.5*(1.5+0.5*\i))},0.65) circle (1pt);
		    \fill ({0.25-0.25*\n+(0.5*(1.5+0.5*\i))},-0.65) circle (1pt);
		   };
		  };
	 	\end{tikzpicture}=\prod\limits_{\substack{v\in T \\ x\le \mathbf{s}_n}}\alpha_x
	 	\qquad\qquad
	  	% %%%%%%%%%%%%%%%%%%%%%% Code for JWn (opts: colors, dots) %%%%%%%%%%%%%%%%%%%%%%%%%%%%%%%%%%%% 
	  	\qbc{n-1}{\lceil\frac{n-1}{2}\rceil}{t}\,\,
	 	\begin{tikzpicture}[scale=.7,baseline=-2]
		  \foreach \n in {5}{
		   \node at (0,0) (JW) {$JW_{\ut_n}$};
		   \ifthenelse{\n=1}
		   	{\draw ({-0.5},-0.4)--({-0.5},0.4)--({0.5},0.4)--({0.5},-0.4)--cycle;}
		   	{\draw ({-0.25*\n},-0.4)--({-0.25*\n},0.4)--({0.25*\n},0.4)--({0.25*\n},-0.4)--cycle;}
		%  %%%%%%%% Uncolored option %%%%%%%%% 
		%  \foreach \i in {1,...,\n}{
		%   \draw ({0.25-0.25*\n+(0.5*(\i-1))},0.4) -- ++(0,0.5);
		%   \draw ({0.25-0.25*\n+(0.5*(\i-1))},-0.4) -- ++(0,-0.5);
		%   };
		%%  %%%%%%%% Dots %%%%%%%%%%%
		%%  \foreach \i in {1,...,\n}{
		%%    \fill ({0.25-0.25*\n+(0.5*(\i-1))},0.9) circle (2pt);
		%%    \fill ({0.25-0.25*\n+(0.5*(\i-1))},-0.9) circle (2pt);
		%%   };
		  %%%%%%%% Colored option (put a \not in the if condition to switch colors) %%%%%%%%% 
		   \foreach \i in {1,2}{
		    \ifthenelse{\not \isodd{\i}}
		    	{\draw[red] ({0.25-0.25*\n+(0.5*(\i-1))},0.4) -- ++(0,0.5);
		    	\draw[red] ({0.25-0.25*\n+(0.5*(\i-1))},-0.4) -- ++(0,-0.5);
		  %%%%%%%% Dots %%%%%%%% 
				\fill[red] ({0.25-0.25*\n+(0.5*(\i-1))},0.9) circle (2pt);
				\fill[red] ({0.25-0.25*\n+(0.5*(\i-1))},-0.9) circle (2pt);
				}
		    	{\draw[blue] ({0.25-0.25*\n+(0.5*(\i-1))},0.4) -- ++(0,0.5);
		    	\draw[blue] ({0.25-0.25*\n+(0.5*(\i-1))},-0.4) -- ++(0,-0.5);
		  %%%%%%%% Dots %%%%%%%% 
				\fill[blue] ({0.25-0.25*\n+(0.5*(\i-1))},0.9) circle (2pt);
				\fill[blue] ({0.25-0.25*\n+(0.5*(\i-1))},-0.9) circle (2pt);
				}
		   };
		   \foreach \i in {\n}{
		    \draw[violet] ({0.25-0.25*\n+(0.5*(\i-1))},0.4) -- ++(0,0.5);
		    \draw[violet] ({0.25-0.25*\n+(0.5*(\i-1))},-0.4) -- ++(0,-0.5);
		  %%%%%%%% Dots %%%%%%%% 
				\fill[violet] ({0.25-0.25*\n+(0.5*(\i-1))},0.9) circle (2pt);
				\fill[violet] ({0.25-0.25*\n+(0.5*(\i-1))},-0.9) circle (2pt);
		   };
		   \foreach \i in {1,2,3}{
		    \fill ({0.25-0.25*\n+(0.5*(1.5+0.5*\i))},0.65) circle (1pt);
		    \fill ({0.25-0.25*\n+(0.5*(1.5+0.5*\i))},-0.65) circle (1pt);
		   };
		  };
	 	\end{tikzpicture}=\prod\limits_{\substack{x\in T \\ x\le \mathbf{t}_n}}\alpha_x
	 	\]
	 	where $\lceil \cdot \rceil:\mathbb{R}\rightarrow\ZZ$ is the ceiling function.
	 	\end{cor}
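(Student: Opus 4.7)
The plan is a direct induction on $n$, using Lemma \ref{lem_prodroots} at each step and then invoking the two telescoping identities for products of two-colored quantum ratios displayed immediately before the corollary. I carry out the $\mathbf{s}_n$-case; the $\mathbf{t}_n$-case is entirely analogous after swapping the roles of $s$ and $t$.

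For the base case $n=1$, relation \eqref{barbell} gives $\alldJW_1=\alpha_s$, which matches the right-hand side since $\mathbf{s}_1=s$ is the only reflection $\leq s$ and $\qbc{0}{0}{s}=1$. For $n \geq 2$, iterating Lemma \ref{lem_prodroots} from $\alldJW_n$ down to $\alldJW_0 = 1$ gives
\[
\alldJW_n \;=\; \prod_{k=1}^n q_{\mathbf{s}_k,\mathbf{s}_{k-1}}.
\]
Each factor $q_{\mathbf{s}_k,\mathbf{s}_{k-1}}$ splits by the lemma into a two-colored quantum-number ratio times a single root $\alpha_{x_k}$, where, by the last sentence of Lemma \ref{lem_prodroots}, $x_k$ is the unique reflection which is a subword of $\us_k$ but not of $\us_{k-1}$.

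Two independent observations then finish the argument. First, as $k$ runs through $1,\dots,n$, the reflections $x_k$ are distinct and exhaust the set of reflections $x \in T$ with $x \leq \mathbf{s}_n$, since every such $x$ arises as $x_k$ for the smallest $k$ satisfying $x \leq \mathbf{s}_k$; hence $\prod_{k=1}^n \alpha_{x_k} = \prod_{x \in T,\, x \leq \mathbf{s}_n} \alpha_x$. Second, by inspecting the parity cases of Lemma \ref{lem_prodroots}, the reciprocals of the quantum ratios appearing in $q_{\mathbf{s}_2,\mathbf{s}_1}, q_{\mathbf{s}_3,\mathbf{s}_2}, \dots, q_{\mathbf{s}_n,\mathbf{s}_{n-1}}$ form precisely the first $n-1$ terms of the telescoping sequence $\frac{\qn{1}{s}}{\qn{1}{s}} \frac{\qn{2}{s}}{\qn{1}{s}} \frac{\qn{3}{s}}{\qn{2}{s}} \frac{\qn{4}{s}}{\qn{2}{s}} \cdots$ displayed in the remark preceding the corollary, which was computed there (separately for the two parities of the endpoint, and consistent with binomial symmetry) to equal $\qbc{n-1}{\lceil (n-1)/2 \rceil}{s}$. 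Combining these facts and multiplying through by the binomial coefficient yields the claimed identity.

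The only real point requiring care is the parity bookkeeping in the telescoping: Lemma \ref{lem_prodroots} alternates between the forms $[r]_s/[2r]_s$ and $[r+1]_s/[2r+1]_s$ according to the parity of $\ell(\mathbf{s}_{k-1})$, and one must verify these interleave to yield the pattern above. Under the standing hypothesis \eqref{eq_asschr0} all quantum numbers are invertible, so the intermediate rational expressions are well defined and the final equality holds in $R$.
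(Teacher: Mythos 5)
Your proposal is correct and follows essentially the same route as the paper: iterate Lemma \ref{lem_prodroots}, separate the quantum-number ratios from the roots, recognize the product of roots as $\prod_{x \le \mathbf{s}_n} \alpha_x$ (each reflection picked up exactly once at the minimal $k$ with $x \le \mathbf{s}_k$), and match the accumulated quantum ratios against the telescoping identities $\prod_{j} \frac{\qn{j}{s}}{\qn{\lceil j/2\rceil}{s}} = \qbc{m}{\lceil m/2\rceil}{s}$ that the paper records just before stating the corollary. The paper compresses this to a remark followed by \enquote{by induction from the lemma}; you have merely filled in the same argument explicitly.
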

	 	\begin{rmk}
	 		Dhillon and Makam \cite{DhiMak} compute extensions between Verma modules for the dihedral groups
	 		in a graded version of category $\mathcal{O}$. 
	 		They consider the category of \emph{Soergel modules} which is obtained from 
	 		the Hecke category by killing on the right the maximal ideal generated by positive degree monomials.
	 		Then one can see the dihedral groups as quotients of the infinite dihedral group $\tilde{A}_1$. 
	 		This defines a functor under which the Wakimoto sheaves are sent to the Verma modules and
	 		if one repeats our computations in that category one can actually deduce the same result.
	 	\end{rmk}
		\section{The two color garden}
		We now drop assumption \eqref{eq_asschr0} and we consider an arbitrary realization $\hh$.

		In this generality we do not have minimal complexes, but in \cite{Mal_red}, one can find a 
		general reduction of any $F_{\uom}^{\bullet}$ to a 
		homotopy equivalent simpler complex $F_{\uom}$, which is then just a different representative 
		for the same Rouquier complex. 
		Let $\Wakr{n}$ denote this summand inside 
		$\Wak{n}^\bullet$, for $n\in \ZZ$.

		It is very hard to reduce further the complex
		$\Wakr{n}$ in this generality. In characteristic zero we could take advantage of the simplicity of 
		decomposing Bott-Samelson objects (essentially encoded in the fact that Kazhdan-Lusztig polynomials
		are all $1$ in this case).

		Nevertheless, from last chapter we have a description of the dg-module of morphisms
		\[
			C_n:=\Homb(\Wakr{-n},\un)
		\]
		in the category $\Cdg(\D)$. We will reduce this dg-module with some homological algebra, 
		strongly using the basis of light leaves maps.
		\subsection{DG diagrams for Wakimoto sheaves}
		A consequence of the reduction from \cite{Mal_rou} is that the space $C_n$ can be written, as a graded module, as
		\[
			C_n=\bigoplus_{\ux\preceq \us_{2n}} \Hom_{\D}(B_{\ux},\un)/\Lambda_{\ux}\Tate{q_{\ux}}
		\]
		where the sum is over all subwords of $\us_{2n}$ (which is the Coxeter projection of $(\sigma_t\sigma_s)^{-n}$),
		and $q_{\ux}=2n-\ell(\ux)$. 
		The symbol $\Lambda$ denotes the bottom-ideal generated by boundary trivalent vertices, of the following form. 
		\[
			\begin{tikzpicture}
				\draw[violet] (\d,0)--(\d,\h);
				\node at (2*\d,.5*\h) {\dots};
				\draw[violet] (3*\d,0)--(3*\d,\h);
				\draw[violet] (4*\d,0)--(4.5*\d,.5*\h)--(5*\d,0);
				\draw[violet] (4.5*\d,.5*\h)--(4.5*\d,\h);
				\draw[violet] (6*\d,0)--(6*\d,\h);
				\node at (7*\d,.5*\h) {\dots};
				\draw[violet] (8*\d,0)--(8*\d,\h);
				\draw[negative] (0.5*\d,0)--(8.5*\d,0);
			\end{tikzpicture}
		\]	 
	%	Hence, recall that any element in $C_n$ can be described as a linear combination of Soergel 
	%	diagrams, where the starting word can vary among all the subwords of $\uv$, and where we quotient 	
		Recall from \cite{Mal_red} that the differential map of $C_n$ acts by successively \textit{uprooting} strands, as in the following
		picture 
		\[
			\begin{tikzpicture}[baseline=.3*\h]
				\node at (0,.3*\h) {\dots};
				\draw[violet] (\d,0)--(\d,.5*\h);\draw[violet,dashed] (\d,.5*\h)--(\d,\h);
				\node at (2*\d,.3*\h) {\dots};
				\draw[negative] (-.5*\d,0)--(2.5*\d,0);
			\end{tikzpicture}
			\rightsquigarrow
			\begin{tikzpicture}[baseline=.3*\h]
				\node at (0,.3*\h) {\dots};
				\fill[violet] (\d,.2*\h) circle (1.5pt); \draw[violet] (\d,.2*\h)--(\d,.5*\h);\draw[violet,dashed] (\d,.5*\h)--(\d,\h);
				\node at (2*\d,.3*\h) {\dots};
				\draw[negative] (-.5*\d,0)--(2.5*\d,0);
			\end{tikzpicture}
		\]
		The image of a diagram via the differential map is then a linear combination, with some signs,
		of all the diagrams obtained
		by uprooting a single strand connected to the bottom boundary.
		The sign is
		determined by the subword $\uu$ and the position of the strand being uprooted, as follows.
		Let $r$ be the number of missing colors on the right of the uprooted strand, with respect to
		the sequence $\dots ststst$, then
		the sign is given by $(-1)^{r+1}$.	
		\begin{exa}
			When applying the differential map to
			\begin{center}
				\begin{tikzpicture}[baseline=.5cm]
					\pic[red] at (\d,0) {dot={\hdot}};
					\pic[blue] at (2*\d,0) {dot={\hdot}};
					\pic[blue] at (3*\d,0) {arch={3*\d}{1*\h}};
					\pic[red] at (4*\d,0) {dot={\hdot}};
					\pic[red] at (5*\d,0) {dot={\hdot}};
					\draw[negative](0.5*\d,0)--(6.5*\d,0);
				\end{tikzpicture}	
			\end{center}
			we obtain
			\begin{multline*}
				\def\s{0.7}\def\b{.15cm}
				-\,
				\begin{tikzpicture}[baseline=\b,scale=\s, transform shape]
					\pic[red] at (\d,.2*\h) {dot={\hdot}}; \fill[red] (\d,.2*\h) circle (1.5pt);
					\pic[blue] at (2*\d,0) {dot={\hdot}};
					\pic[blue] at (3*\d,0) {arch={3*\d}{1*\h}};
					\pic[red] at (4*\d,0) {dot={\hdot}};
					\pic[red] at (5*\d,0) {dot={\hdot}};
					\draw[negative](0.5*\d,0)--(6.5*\d,0);
				\end{tikzpicture}	
				\,-\,
				\begin{tikzpicture}[baseline=\b,scale=\s, transform shape]
					\pic[red] at (\d,0) {dot={\hdot}};
					\pic[blue] at (2*\d,.2*\h) {dot={\hdot}}; \fill[blue] (2*\d,.2*\h) circle (1.5pt);
					\pic[blue] at (3*\d,0) {arch={3*\d}{1*\h}};
					\pic[red] at (4*\d,0) {dot={\hdot}};
					\pic[red] at (5*\d,0) {dot={\hdot}};
					\draw[negative](0.5*\d,0)--(6.5*\d,0);
				\end{tikzpicture}	
				\,+\,
				\begin{tikzpicture}[baseline=\b,scale=\s, transform shape]
					\pic[red] at (\d,0) {dot={\hdot}};
					\pic[blue] at (2*\d,0) {dot={\hdot}};
					\draw[blue] (3*\d,.2*\h) ..controls (3.3*\d,\h) and (6*\d,\h).. (6*\d,0); \fill[blue] (3*\d,.2*\h) circle (1.5pt);
					\pic[red] at (4*\d,0) {dot={\hdot}};
					\pic[red] at (5*\d,0) {dot={\hdot}};
					\draw[negative](0.5*\d,0)--(6.5*\d,0);
				\end{tikzpicture}	
				\,+ \\ 
				\def\s{0.7}\def\b{.15cm}
				+\,
				\begin{tikzpicture}[baseline=\b,scale=\s, transform shape]
					\def\hdott{.8*\hdot}
					\pic[red] at (\d,0) {dot={\hdot}};
					\pic[blue] at (2*\d,0) {dot={\hdot}};
					\pic[blue] at (3*\d,0) {arch={3*\d}{1*\h}};
					\pic[red] at (4*\d,.2*\h) {dot={\hdott}}; \fill[red] (4*\d,.2*\h) circle (1.5pt);
					\pic[red] at (5*\d,0) {dot={\hdot}};
					\draw[negative](0.5*\d,0)--(6.5*\d,0);
				\end{tikzpicture}	
				\,-\,
				\begin{tikzpicture}[scale=\s,baseline=\b, transform shape]
					\def\hdott{.8*\hdot}
					\pic[red] at (\d,0) {dot={\hdot}};
					\pic[blue] at (2*\d,0) {dot={\hdot}};
					\pic[blue] at (3*\d,0) {arch={3*\d}{1*\h}};
					\pic[red] at (4*\d,0) {dot={\hdot}};
					\pic[red] at (5*\d,.2*\h) {dot={\hdott}}; \fill[red] (5*\d,.2*\h) circle (1.5pt);
					\draw[negative](0.5*\d,0)--(6.5*\d,0);
				\end{tikzpicture}	
				\,+\,
				\begin{tikzpicture}[baseline=\b,scale=\s, transform shape]
					\pic[red] at (\d,0) {dot={\hdot}};
					\pic[blue] at (2*\d,0) {dot={\hdot}};
					\draw[blue] (3*\d,0) ..controls (3*\d,\h) and (5.7*\d,\h).. (6*\d,.2*\h); \fill[blue] (6*\d,.2*\h) circle (1.5pt);
					\pic[red] at (4*\d,0) {dot={\hdot}};
					\pic[red] at (5*\d,0) {dot={\hdot}};
					\draw[negative](0.5*\d,0)--(6.5*\d,0);
				\end{tikzpicture}.	
			\end{multline*}		
			Here, for instance, the sign of the second term, obtained by
			uprooting the blue dot, is $-1$: in fact
			there are two colors missing on its right (a red on the right of the blue dot, and a 
			blue between the two red dots under the arch), so $r=2$ in this case.
		%	and in fact one can check that in all the possible arrangements
		%	of patches there is an even number of them on the right of the blue dot.
		\end{exa}	
		It is convenient to consider the (unbounded) dg-module 
		\[
			\Gamma=\bigoplus_{\ux}\Hom_{\D}(B_{\ux},\un)/{\Lambda_{\ux}}\Tate{-\ell(\ux)}
		\]
		where $\ux$ runs through all Coxeter words, and the differential map is the same as before.
		Then notice that the dg tensor product of morphisms
		(with signs according to the Koszul rule) makes $\Gamma$ a dg-algebra.
		We see the space $C_n$ as a dg submodule of $\Gamma[-2n]$.
		\begin{rmk}
			Notice that if $k \le n$ all Coxeter words of length $k$ appear as
			subwords of $\us_{2n}$. This means that, if $\tilde{\tau}^{\ge i}$ is the naive truncation in $\Kb(\D)$ (or even in $\Cdg(\D)$)
			we have
			\[
				\tilde{\tau}^{\ge n}(C_n)=\tilde{\tau}^{\ge n}(\Gamma[-2n])
			\]
			In other words, for high cohomological degree the ``tails'' of $C_n$ and $\Gamma$ are the same (up to a shift).
			So the properties of $\Gamma$ will correspond to \textit{asymptotic} properties of $C_n$.
		\end{rmk}			
		A crucial property of $C_n$ and $\Gamma$ for our purpose is that
		they are free as left $R$-modules. Bases are obtained from the light leaves bases of the 
		spaces $\Hom(B_{\ux},\un)$. Let us now give a 
		precise description of these basis for $C_n$.
		\subsection{Shrubberies}
		First let us look more closely to the light leaves maps in type $\tilde{A_1}$.
		
		We are interested in morphism spaces of the form $\Hom(B_{\uw},\un)$. In the sequel the
		\emph{length} of an element $\gamma$ of such a
		space, denoted by $\ell(\gamma)$, will just mean the length of $\uw$ as a Coxeter word.
		
		We introduce an operation on morphisms of this form.	
		For a given positive integer $k$, consider the morphism
		\[
			\psi_{k}^{\re{s}}:=
			\underbrace{%
			\begin{tikzpicture}[baseline=.2cm,x=.3cm]
				\draw[gray] (-.5,0)--(7.5,0);
				\pic[red] at (0,0) {aqueduct={2}{5}{7}{1}};
				\node at (3.5,.3) {\dots};
				\foreach \i in {0,2,5,7}{%
					\node[anchor=north] at (\i,0) {$B_s$};
					}
				\node at (3.5,-.2) {\dots};
			\end{tikzpicture}
			}_{k+1}
			\in \Hom_{\D}(B_{\underline{ss\dots ss}},\un)
		\]	
		For $i=1,\dots, k$, let $\uw_i$ be a Coxeter word and
		$\gamma_i\in \Hom_{\D}(B_{\uw_i},\un)$. Then
		we define
		\[
			\ropen \gamma_1 \rcomma \dots \rcomma \gamma_k \rclosed :=
			\psi_{k}^s
			\circ
			(\id_{B_s}\otimes\gamma_1\otimes\id_{B_s}\otimes\dots\otimes\id_{B_s}\otimes\gamma_k\otimes\id_{B_s})
		\]
		This is a morphism from $B_{\uw'}$ to $\un$ where 
		\[
			\uw'= s\uw_1 s \dots s \uw_k s.
		\]
		In other words, this is the morphism obtained by covering the 
		original ones with an $s$-arch and separating them by vertical strands. 
		
		\begin{exa}
			Let
			\begin{align*}
				&\gamma_1=
				\begin{tikzpicture}[baseline=.2cm,x=.5cm,y=.5cm]
					\pic[blue] at (1,0) {arch={2}{2}};
					\pic[red] at (2,0) {dot={1}}; 
					\draw[negative] (0.5,0)--(3.5,0);
				\end{tikzpicture}
				\in \Hom(B_{\bl{t}\re{s}\bl{t}},\un) \\
				&\gamma_2=
				\begin{tikzpicture}[baseline=.2cm,x=.5cm,y=.4cm]
					\pic[red] at (1,0) {bridge={2}{4}{2.5}};
					\pic[blue] at (2,0) {dot={1}}; 
					\pic[blue] at (4,0) {dot={1}}; 
					\pic[red] at (6,0) {dot={1}}; 
					\draw[negative] (0.5,0)--(6.5,0);
				\end{tikzpicture}
				\in \Hom(B_{\re{s}\bl{t}\re{s}\bl{t}\re{ss}},\un) \\
				&\gamma_3=
				\begin{tikzpicture}[baseline=.2cm,x=.5cm,y=.5cm]
					\pic[blue] at (1,0) {dot={1}}; 
					\pic[red] at (2,0) {dot={1}}; 
					\draw[negative] (0.5,0)--(2.5,0);
				\end{tikzpicture}
				\in \Hom(B_{\bl{t}\re{s}},\un)
			\end{align*}
			Then 
			\[
				\ropen \gamma_1 \rcomma \gamma_2 \rcomma \gamma_3 \rclosed=
				\begin{tikzpicture}[baseline=.2cm,x=.35cm,y=.4cm]
					\pic[red] at (1,0) {aqueduct={4}{11}{14}{4}};
					\pic[blue] at (2,0) {arch={2}{2}};
					\pic[red] at (3,0) {dot={1}}; 
					\pic[red] at (6,0) {bridge={2}{4}{2.5}};
					\pic[blue] at (7,0) {dot={1}}; 
					\pic[blue] at (9,0) {dot={1}}; 
					\pic[red] at (11,0) {dot={1}}; 
					\pic[blue] at (13,0) {dot={1}}; 
					\pic[red] at (14,0) {dot={1}}; 
					\draw[negative] (0.5,0)--(15.5,0);
				\end{tikzpicture}
				\in \Hom(B_{\re{s}\bl{t}\re{s}\bl{t}\re{ss}\bl{t}\re{s}\bl{t}\re{sss}\bl{t}\re{ss}},\un)
			\]
		\end{exa}		
		In a similar way one defines
		$\psi_k^{\bl{t}}$ and $\bopen \cdot \bcomma \cdots \bcomma \cdot \bclosed$.
		
		Now, recall the diagrammatic construction of light leaves maps from \cite{EW}. Notice that, 
		as we only have one reduced expression for all elements of $W$ and there is no braid relation, 
		the light leaves basis does not depend on any choice. Furthermore we can easily write its 
		elements.
		\begin{exa}
			The following diagram is a light leaves map in $\tilde{A_1}$ towards the unit.
			\begin{equation}\label{eq_shrubberywithtriv}
				\begin{tikzpicture}[baseline=1cm]
					\def \d{.5cm}
					\def \h{1cm}
					\def \hdot{.4cm}
					\draw[gray](0,0)--(17*\d,0);
					\pic[red] at (\d,0) {arch={8*\d}{2*\h}};
					\pic[blue] at (2*\d,0) {bridge={2*\d}{6*\d}{1.5*\h}};
					\pic[red] at (3*\d,0) {dot={\hdot}};
		%			\pic[blue] at (6*\d,0) {dot={\hdot}};
					\pic[red] at (5*\d,0) {bridge={\d}{2*\d}{1*\h}};
					\pic[blue] at (10*\d,0) {aqueduct={\d}{2*\d}{4*\d}{1*\h}};
		%			\pic[red] at (11*\d,0) {dot={\hdot}};
					\pic[red] at (13*\d,0) {dot={\hdot}};
					\pic[red] at (15*\d,0) {dot={\hdot}};
					\pic[blue] at (16*\d,0) {dot={\hdot}};
					\foreach \i in {0,1,4,9}{%
						\node[font=\footnotesize] at (\i*\d+\d,-.2*\h) {U1};
					}
					\foreach \i in {2,12,14,15}{%
						\node[font=\footnotesize] at (\i*\d+\d,-.2*\h) {U0};
					}
					\foreach \i in {6,7,8,13}{%
						\node[font=\footnotesize] at (\i*\d+\d,-.2*\h) {D1};
					}
					\foreach \i in {3,5,10,11}{%
						\node[font=\footnotesize] at (\i*\d+\d,-.2*\h) {D0};
					}
				\end{tikzpicture}			
			\end{equation}		
			On bottom we have indicated the decorated 01-sequence (expressing the identity element) 
			corresponding to it.
	%		with starting word $\uw=\underline{ststssststttstst}$, corresponding to the 01-sequence
	%		$\boe=1100101111000100$.
		\end{exa}
		\begin{rmk}
			Let $\uw=s_1\cdots s_k$ and $x=1$. If the Bruhat stroll defined by a subexpression $\boe$
			of $\uw$ passes through the identity at the step $i$, namely $\uw_{\le i}^{\boe_{\le i}}=1$, 
			then the corresponding light leaves map $L_{\uw,\boe}$ can be split as the tensor product
			$L_{\uw_{\le i},\boe_{\le i}}\otimes L_{\uw_{>i},\boe_{>i}}$. For example the above map can be 
			written as the tensor product of four pieces.
		\end{rmk}		
		Here is some some terminology to describe these morphisms.
		\begin{enumerate}
			\item The empty basis element, whose starting word is the empty word, is called \textit{trivial};
			\item A non-trivial basis element that cannot be written as the tensor product of smaller non-trivial basis elements
				is called \textit{shrub} (this corresponds to the fact that the Bruhat stroll determined by 
				the 01-sequence $\boe$ does not return to the identity element until the end). The trivial basis
				element will also be referred to as a \textit{trivial shrub} in the sequel;
			\item A (non-trivial) shrub is said to be \textit{red} (or \textit{blue}) if its first starting point is red (or blue, respectively).
				This means that its outer strand, the one adjacent to the topmost region of the strip, is red (or blue). We declare that 
				the trivial shrub is both red and blue;
			\item Any basis element is then written uniquely as the tensor product of shrubs,
				and we will refer to it as a \textit{shrubbery}. Clearly we have a separation between different shrubs whenever
				the Bruhat stroll gets to the identity element.
				For example, the shrubbery \eqref{eq_shrubberywithtriv}
				has four shrubs, two of them are red and the other two blue;
			\item A shrubbery is \textit{red} (or \textit{blue}) if all its shrubs are red (or blue respectively). 
	%			We will denote by
	%			$\Rshr$ and $\Bshr$ the sets of red and blue shrubberies respectively.
			%	We call \textit{red} (respectively \textit{blue}) a bridge whose outer strand is red (resp.\ blue). 
			%	So in the above example the first from the left is a red bridge and the second is a blue bridge.
		\end{enumerate}
		We can now give a recursive description of shrubberies:
		\begin{itemize}
			\item We denote by $\rdot$ and $\bdot$ the red and blue dots respectively;
			\item A blue shrub can be uniquely written in the form $\bopen L_1 \bcomma \dots \bcomma L_k \bclosed$ 
				for a sequence of (possibly trivial) red shrubberies $L_1,\dots, L_k$ (we see the blue dot 
				as the case $k=0$).
				Formally, if $(\uu_1,\boe_1),\dots,(\uu_k,\boe_k)$ are
				the starting words and subexpressions defining $L_1,\dots, L_k$, 
				this corresponds to take $(\uu,\boe)$ as follows
				\begin{gather*}
					\uu=t\, \uu_1\, t\, \uu_2\, \dots\, t\, \uu_k\,t\\
					\boe=1\, \boe_1\, 0\, \boe_2\, \dots\, 0\, \boe_k\, 1
				\end{gather*}
				(the decorations of the new symbols will be necessarily $U$ for the first and $D$ for all the others).
				Vice versa, one gets a red shrub starting by a sequence of blue shrubberies.
			\item Finally, a shrubbery is denoted by the concatenation of its shrubs.
				Hence, for example, the shrubbery \eqref{eq_shrubberywithtriv} would be
				\[
					\ropen \bopen \rdot \bcomma \ropen \rcomma \rclosed \bclosed \rclosed \bopen \bcomma \bcomma \rdot \bclosed \rdot \bdot
				\]
		\end{itemize}
		\begin{rmk}
			Notice that one can easily pass from the above notation to the pair $(\uw,\boe)$ and vice versa.
			The word $\uw$ corresponds to the sequence of colors and for $\boe$ we use
			the dictionary:
			\[
				(\,\leftrightarrow\, U1\qquad\qquad \bullet\, \leftrightarrow\, 
				U0 \qquad\qquad )\,\leftrightarrow\, D1 \qquad\qquad |\,\leftrightarrow\, D0
			\]
		\end{rmk}
		\begin{rmk}\label{rmk_breaking}
			Let $L$ be a shrubbery with starting word $\uw$, and consider the morphism 
			$\bopen L \bclosed$.
			If $L$ is red then this is a blue shrub.
			Otherwise, if $L$ contains at least one blue shrub, then it can be written as 
			\[
				K_1 \bopen L_1 \bcomma \dots \bcomma L_k \bclosed K_2 
			\]
			where $K_1$ and $K_2$ are (any) shrubberies and $L_1,\dots, L_k$ are red shrubberies
			(recall that $k=0$ correspond to a blue dot).
			Notice then that we can protract a blue dot from the blue shrub 
			and factor $\bopen L \bclosed$ as 
			\[
				\begin{tikzpicture}
					\draw[gray] (.5*\d,0)--(11.5*\d,0);
					\draw[blue] (\d,0)--(\d,2.2*\d);
					\node[anchor=south] at (2*\d,0) {$K_1$};\pic[dashed] at (1.25*\d,0) {arch={1.5*\d}{.8*\h}};
					\draw[blue] (3*\d,0) arc (180:90:2*\d);
					\draw[blue] (5*\d,0)--(5*\d,2*\d);
					\draw[blue] (5*\d,2*\d)--(7*\d,2*\d);
					\draw[blue] (6*\d,2*\d)--(6*\d,3*\d);\fill[blue] (6*\d,3*\d) circle (1.5pt);
					\draw[blue] (7*\d,0)--(7*\d,2*\d);
					\draw[blue] (9*\d,0) arc (0:90:2*\d);
					\node[anchor=south] at (4.1*\d,0) {$L_1$}; \pic[dashed,red] at (3.3*\d,0) {arch={1.5*\d}{.8*\h}};
					\node[anchor=south] at (6*\d,.5*\d) {$\dots$};
					\node[anchor=south] at (7.9*\d,0) {$L_k$}; \pic[dashed,red] at (7.2*\d,0) {arch={1.5*\d}{.8*\h}};
					\node[anchor=south] at (10*\d,0) {$K_2$}; \pic[dashed] at (9.25*\d,0) {arch={1.5*\d}{.8*\h}};
					\draw[blue] (11*\d,0)--(11*\d,2.2*\d);
					\draw[dashed,gray] (.5*\d,2.2*\d)--(11.5*\d,2.2*\d);
					\pic[blue] at (\d,2.2*\d) {arch={10*\d}{\h}};
				\end{tikzpicture}
			\]		
			Then notice that
			\[
				\begin{split}
					\begin{tikzpicture}[baseline=.2*\h,scale=.9,transform shape]
						\def\dd{.5cm}
						\draw(-.5*\d,0)--({2*\dd+.5*\d},0);
						\pic[blue] at (0,0) {arch={2*\dd}{\h}};
						\pic[blue] at (\dd,0) {dot={\hdot}};
					\end{tikzpicture}
					& =
						\begin{tikzpicture}[baseline=.2*\h,scale=.9,transform shape]
							\def\dd{.5cm}\def\hhdot{0.2cm}
							\draw(-.5*\d,0)--({2*\dd+.5*\d},0);
							\pic[blue] at (0,0) {arch={2*\dd}{\h}};
							\begin{scope}
								\clip (0,0) ..controls (0,\h) and (2*\dd,\h).. (2*\dd,0);
								\draw[blue] (\dd,\h)--(\dd,0.45cm);
								\fill[blue] (\dd,0.45cm) circle (1.5pt);
							\end{scope}
							\pic[blue] at (\dd,0) {dot={\hhdot}};
						\end{tikzpicture}
						=
						\begin{tikzpicture}[baseline=.2*\h,scale=.9,transform shape]
							\def\dd{1cm}\def\hhdot{0.2cm}
							\draw(-.5*\d,0)--({2*\dd+.5*\d},0);
							\pic[blue] at (0,0) {arch={2*\dd}{\h}};
							\begin{scope}
								\clip (0,0) ..controls (0,\h) and (2*\dd,\h).. (2*\dd,0);
								\draw[blue] (\dd,\h)--(\dd,0);
							\end{scope}
							\node %[draw,inner sep=.05cm] 
								at (.6*\dd,0.32cm) {$\delta_t$};
						\end{tikzpicture}
						-
						\begin{tikzpicture}[baseline=.2*\h,scale=.9,transform shape]
							\def\dd{1cm}\def\hhdot{0.2cm}
							\draw(-.5*\d,0)--({2*\dd+.5*\d},0);
							\pic[blue] at (0,0) {arch={2*\dd}{\h}};
							\begin{scope}
								\clip (0,0) ..controls (0,\h) and (2*\dd,\h).. (2*\dd,0);
								\draw[blue] (\dd,\h)--(\dd,0);
							\end{scope}
							\node %[draw,inner sep=.05cm] 
								at (1.4*\dd,0.32cm) {$t(\delta_t)$};
						\end{tikzpicture}
					\\
					& =
						\begin{tikzpicture}[baseline=.2*\h,scale=.9,transform shape]
							\def\dd{.7cm}\def\hhdot{0.2cm}
							\draw(-.5*\d,0)--({2*\dd+.5*\d},0);
							\pic[blue] at (0,0) {arch={2*\dd}{\h}};
							\begin{scope}
								\clip (0,0) ..controls (0,\h) and (2*\dd,\h).. (2*\dd,0);
								\draw[blue] (\dd,\h)--(\dd,0);
							\end{scope}
							\node %[draw,inner sep=.05cm] 
								at (-.2cm,0.7cm) {$t(\delta_t)$};
						\end{tikzpicture}
						+
						\begin{tikzpicture}[baseline=.2*\h,scale=.9,transform shape]
							\def\dd{.7cm}\def\hhdot{0.2cm}
							\draw(-.5*\d,0)--({2*\dd+.5*\d},0);
							\draw[blue] (0,0) arc (180:150:\dd); \fill[blue,xshift=\dd] (150:\dd) circle (1.5pt);
							\draw[blue,xshift=\dd] (120:\dd) arc (120:0:\dd); \fill[blue,xshift=\dd] (120:\dd) circle (1.5pt);
							\draw[blue] (\dd,0)--(\dd,\dd);
							\node %[draw,inner sep=.05cm] 
								at (-.2cm,0.7cm) {$\partial_t(\delta_t)$};
						\end{tikzpicture}
						-
						\begin{tikzpicture}[baseline=.2*\h,scale=.9,transform shape]
							\def\dd{.7cm}\def\hhdot{0.2cm}
							\draw(-.5*\d,0)--({2*\dd+.5*\d},0);
							\pic[blue] at (0,0) {arch={2*\dd}{\h}};
							\begin{scope}
								\clip (0,0) ..controls (0,\h) and (2*\dd,\h).. (2*\dd,0);
								\draw[blue] (\dd,\h)--(\dd,0);
							\end{scope}
							\node %[draw,inner sep=.05cm] 
								at (2*\dd+.1cm,0.7cm) {$\delta_t$};
						\end{tikzpicture}
						-
						\begin{tikzpicture}[baseline=.2*\h,scale=.9,transform shape]
							\def\dd{.7cm}\def\hhdot{0.2cm}
							\draw(-.5*\d,0)--({2*\dd+.5*\d},0);
							\draw[blue] (2*\dd,0) arc (0:30:\dd); \fill[blue,xshift=\dd] (30:\dd) circle (1.5pt);
							\draw[blue,xshift=\dd] (60:\dd) arc (60:180:\dd); \fill[blue,xshift=\dd] (60:\dd) circle (1.5pt);
							\draw[blue] (\dd,0)--(\dd,\dd);
							\node %[draw,inner sep=.05cm] 
								at (2*\dd+.4cm,0.7cm) {$\partial_t\big(t(\delta_t)\big)$};
						\end{tikzpicture}
					\\
					& =
						-\alpha_t
						\begin{tikzpicture}[baseline=.2*\h,scale=.9,transform shape]
							\def\dd{.7cm}\def\hhdot{0.2cm}
							\draw(-.5*\d,0)--({2*\dd+.5*\d},0);
							\pic[blue] at (0,0) {arch={2*\dd}{\h}};
							\begin{scope}
								\clip (0,0) ..controls (0,\h) and (2*\dd,\h).. (2*\dd,0);
								\draw[blue] (\dd,\h)--(\dd,0);
							\end{scope}
						\end{tikzpicture}
						+
						\begin{tikzpicture}[baseline=.2*\h,scale=.9,transform shape]
							\draw (.5*\d,0)--(3.5*\d,0);
							\pic[blue] at (\d,0) {dot={\hdot}};
							\pic[blue] at (2*\d,0) {arch={\d}{.7*\h}};
						\end{tikzpicture}
						+
						\begin{tikzpicture}[baseline=.2*\h,scale=.9,transform shape]
							\draw (.5*\d,0)--(3.5*\d,0);
							\pic[blue] at (\d,0) {arch={\d}{.7*\h}};
							\pic[blue] at (3*\d,0) {dot={\hdot}};
						\end{tikzpicture}
				\end{split}
			\]
			(in fact $t(\delta_t)=\delta_t-\langle \delta_t,\alpha_t^\vee\rangle\alpha_t=\delta_t-\alpha_t$).
			
			Hence
			\[
				\bopen L \bclosed = -\alpha_t \bopen K_1 \bcomma L_1 \bcomma \dots \bcomma L_k \bcomma K_2 \bclosed +\bdot\, K_1 \bopen L_1 \bcomma \dots \bcomma L_k \bcomma K_2 \bclosed
				+\bopen K_1 \bcomma L_1 \bcomma \dots \bcomma L_k \bclosed K_2 \, \bdot
			\]
			and, iterating the process if necessary, one gets the linear combination of shrubberies expressing
			$\bopen L \bclosed$. 
			
			One can then see that for any $\gamma_1,\dots,\gamma_k$, the morphism 
			$\bopen \gamma_1 \bcomma \dots \bcomma \gamma_k \bclosed$ can be written as a linear combination
			of shrubberies which are either products of smaller shrubs, or shrubs in which the sequence 
			of the sizes of the arches is a refinement of 
			the starting one. This means shrubs of the form
			\[
				\bopen L^{(1)}_1 \bcomma \dots \bcomma L^{(1)}_{r_1}\bcomma L^{(2)}_1 \bcomma \dots \bcomma L^{(2)}_{r_2} \bcomma\dots \bcomma \dots \bcomma L^{(k)}_1 \bcomma \dots \bcomma L^{(k)}_{r_k}\bclosed
			\]
			where $\ell(L^{(j)}_1)+\dots +\ell(L^{(j)}_{r_j})=\ell(\gamma_j)$.
		\end{rmk}
		
		Finally, to find bases for $C_n$ we have to quotient by the morphisms in $\Lambda_{\ux}$.		
		More precisely,
		the spaces $\Hom_{\D}(B_{\ux},\un)/\Lambda_{\ux}$ are still free with bases 
		given by light leaves maps that do not belong to
		$\Lambda_{\ux}$. Then the collection of all such light leaves maps when $\ux$ runs through 
		the subwords of $\us_{2n}$ give a basis for $C_n$.
		Similarly, one gets a basis for $\Gamma$ by considering all Coxeter words.		
	
		The shrubberies that are not in the annihilator are precisely those that do not have \textit{empty arches}, namely
		subsequences of one of the forms 
		\begin{align*}
			&(|,&	&(),&	&||,&	&|).		
		\end{align*}
		In fact, these correspond precisely to the forbidden sequences.
		\begin{exa}	
			Here is a shrubbery with this property,
			\begin{equation}\label{eq_shrubbery2}
				\begin{tikzpicture}[baseline=1cm]
					\def \d{.5cm}
					\def \h{1cm}
					\def \hdot{.4cm}
					\pic[red] at (\d,0) {arch={8*\d}{2*\h}};
					\pic[blue] at (2*\d,0) {bridge={2*\d}{6*\d}{1.5*\h}};
					\pic[red] at (3*\d,0) {dot={\hdot}};
					\pic[blue] at (6*\d,0) {dot={\hdot}};
					\pic[red] at (5*\d,0) {arch={2*\d}{1*\h}};
					\pic[blue] at (10*\d,0) {bridge={2*\d}{4*\d}{1*\h}};
					\pic[red] at (11*\d,0) {dot={\hdot}};
					\pic[red] at (13*\d,0) {dot={\hdot}};
					\pic[red] at (15*\d,0) {dot={\hdot}};
					\pic[blue] at (16*\d,0) {dot={\hdot}};
					\foreach \i in {0,1,4,9}{%
						\node[font=\footnotesize] at (\i*\d+\d,-.2*\h) {U1};
					}
					\foreach \i in {2,5,10,12,14,15}{%
						\node[font=\footnotesize] at (\i*\d+\d,-.2*\h) {U0};
					}
					\foreach \i in {3,11}{%
						\node[font=\footnotesize] at (\i*\d+\d,-.2*\h) {D0};
					}
					\foreach \i in {6,7,8,13}{%
						\node[font=\footnotesize] at (\i*\d+\d,-.2*\h) {D1};
					}
					\draw[negative](0,0)--(17*\d,0);
				\end{tikzpicture}			
			\end{equation}
			which corresponds to the expression
			\[
				\ropen \bopen \rdot \bcomma \ropen \bdot \rclosed \bclosed \rclosed \bopen \rdot \bcomma \rdot \bclosed \rdot \bdot
			\]
		\end{exa}
		Let $\mathcal{L}$ denote the set of all such shrubberies, so that $\mathcal{L}$ is a basis for $\Gamma$.
		Let also $\Rshr$ and $\Bshr$ denote the subsets of red and blue shrubberies respectively.
		\begin{rmk}
			As every shrubbery is the tensor product of shrubs, we see that
			$\Gamma$ is generated by shrubs, as a dg-$R$-algebra.
		\end{rmk}
		\subsection{Gardening principles}
		We will now investigate the properties of the differential map with respect to this basis.

		We introduce a little more terminology.
		Again, if $L$ is a shrubbery its \textit{length} $\ell(L)$ is that of its starting word.
		We say that a shrub is \textit{complete} when its starting word is of 
			the form $\ut_{2h+1}$ or $\us_{2h+1}$ (i.e.\ the colors of its starting points are alternating).
			A shrubbery is said \textit{complete} when all of its shrubs are (notice that the whole
			starting word need not be alternating).
		We call \textit{stem} a vertical strand connecting the boundary with a trivalent vertex (so stems
		correspond precisely to $D0$'s in $\boe$.
		A shrubbery is \textit{well-tended} if it is complete and without stems. 
		Hence notice that well-tended \textit{shrubs} have to be
		of the form
		\[
			\bopen \ropen \dots \ropen \bopen \rdot \bclosed \rclosed \dots \rclosed \bclosed =
			\begin{tikzpicture}[scale=.7, transform shape,baseline= .5cm]
				\draw (-6*\d,0)--(6*\d,0);
				\pic[red] at (0,0) {dot={\hdot}};
				\pic[blue] at (-\d,0) {arch={2*\d}{\h}};
				\pic[red] at (-2*\d,0) {arch={4*\d}{1.5*\h}};
				\node at (-3*\d,\hdot) {\dots};
				\node at (3*\d,\hdot) {\dots};
				\pic[red] at (-4*\d,0) {arch={8*\d}{2.5*\h}};
				\pic[blue] at (-5*\d,0) {arch={10*\d}{3*\h}};
			\end{tikzpicture}
		\]
		In particular, dots are well-tended shrubs. Also the empty diagram is declared 
		to be well-tended.
		Notice that a well-tended shrub is determined by the number of its strands and
		by its outer color: we will denote $\rho_k$ and $\beta_k$ respectively 
		the red and blue well-tended shrubs with $k$ strands (which has length $2k-1$).
	%	\begin{rmk}
	%		We can filter $\Gamma$ according to the number of shrubs: let 
	%		$\Gamma_{\ge k}$ be the span of shrubberies with more than $k$ shrubs, then $\Gamma_{\ge k}$ is 
	%		a dg submodule of $\Gamma$ because the differential, being compatible with the tensor product, 
	%		can never decrease the number of shrubs.
	%	\end{rmk}
	
		We want to define a partial order $\le$ on shrubberies of a given length $\ell$. 
		We proceed by induction on $\ell$.
		\begin{dfn}\label{def_order}
			\begin{enumerate}
				\item If $\ell=0,1$ (i.e.\ when the only shrubberies are, respectively, the empty one, or 
					the red dot and the blue dot)
					we declare that $L\le L'$ if and only if $L=L'$;
				\item If $\ell>1$, suppose that we have defined a partial order on all sets of
					shrubberies	of length smaller than $\ell$. Take first two \textit{shrubs} $L$ and $L'$ 
					of length $\ell$.
					We declare that $L\le L'$ if they are of the same color, say blue, and, when written as
					\begin{gather*}
						L=\bopen L_1\bcomma \dots \bcomma L_k \bclosed \\
						L'=\bopen L_1'\bcomma \dots \bcomma L'_{k'}\bclosed
					\end{gather*}
					we have either:
					\begin{itemize}
						\item $\ell(L_1)<\ell(L_1')$ (we also call these the \textit{sizes of the first arches}), or;
						\item $\ell(L_1)=\ell(L_1')$ and $L_1< L_1'$ in the order of shrubberies of length $\ell(L_1)$, or;
						\item $L_1= L_1'$, and $\bopen L_2\bcomma \dots\bcomma L_k\bclosed \le \bopen L_2\bcomma \dots\bcomma L'_{k'}\bclosed$ in the order of 
							shrubs of length $\ell-\ell(L_1)$ (notice that if $k=1$, we already have $L=L'$).
					\end{itemize}
				\item Finally take two shrubberies $L$ and $L'$, which are not just shrubs, and
					decompose them into shrubs
					\begin{gather*}
						L=K_1\otimes\dots\otimes K_m \\
						L'=K_1'\otimes\dots\otimes K'_{m'}.
					\end{gather*}
					Then we declare that $L\le L'$ if, either:
					\begin{itemize}
					\item $\ell(K_1)<\ell(K_1')$ or;
					\item $\ell(K_1)=\ell(K_1')$ and $K_1< K_1'$ in the order on shrubs of length $\ell(K_1)$, or;
					\item $K_1= K_1'$, and $K_2\otimes\dots\otimes K_m\le K_2'\otimes\dots\otimes K_{m'}'$
						in the order of shrubberies of length $\ell-\ell(K_1)$.
					\end{itemize}					
				\end{enumerate}
	%				\[
	%					\begin{tikzpicture}[scale=.8, transform shape]
	%						\draw (-\d,0)--(11*\d,0);
	%						\pic[blue] at (0,0) {bridge={5*\d}{8*\d}{2*\h}};
	%%						\pic[red] at (\d,0) {arch={3*\d}{1.3*\h}};
	%						\node at (2.5*\d,.3*\h) {\dots};
	%						\node at (6.5*\d,.3*\h) {\dots};
	%						\node at (10*\d,.3*\h) {\dots};
	%						\draw[<->] (\d,-.2cm)--(4*\d,-.2cm);
	%						\node at (2.5*\d,-.4cm) {$b$};
	%						\draw[<->] (0,-.6cm)--(8*\d,-.6cm);
	%						\node at (4.5*\d,-.8cm) {$a$};
	%						\node at (5*\d,2.3*\h) {$L$};
	%					\begin{scope}[xshift=15*\d]
	%						\draw (-\d,0)--(11*\d,0);
	%						\pic[blue] at (0,0) {bridge={6*\d}{8*\d}{2*\h}};
	%%						\pic[red] at (\d,0) {arch={4*\d}{1.3*\h}};
	%						\node at (3*\d,.3*\h) {\dots};
	%						\node at (10*\d,.3*\h) {\dots};						
	%						\draw[<->] (\d,-.2cm)--(5*\d,-.2cm);
	%						\node at (3*\d,-.4cm) {$b'$};
	%						\draw[<->] (0,-.6cm)--(8*\d,-.6cm);
	%						\node at (4.5*\d,-.8cm) {$a'$};
	%						\node at (7*\d,.3*\h) {\dots};
	%						\node at (5*\d,2.3*\h) {$L'$};
	%					\end{scope}
	%					\end{tikzpicture}
	%				\]			
		\end{dfn}	
		\begin{rmk}\label{rmk_ordermon}
			By definition, the order is lexicographic with respect to the monoidal structure, which means that, given
			the shrubberies $L$, $L'$, $M$, $M'$ and $N$, we have	
			\[
				L < L'\,\Rightarrow\, N\otimes L\otimes M < N\otimes L'\otimes M'
			\]
		\end{rmk}
	%	\begin{rmk}
	%		Again by definition, the order is lexicographic with respect to the operation $\vopen \dots \vcomma \dots \vcomma \dots \vclosed$.
	%		More precisely, if $L_1,\dots,L_k$ and $L,L'$ are red shrubberies with $L< L'$, and $K_1\dots,K_m$ and $K_1',\dots,$
	%	\end{rmk}
		We now want to prove that the differential map has an upper-triangularity property with respect to this order.
		\begin{rmk}
			Uprooting a stem from a shrubbery always gives another shrubbery.
			Given $L$ a shrubbery with at least one stem, let $u(L)$ be the shrubbery obtained by uprooting the 
			leftmost outer stem. More precisely, if $L$ is a single, say, red shrub $\ropen L_1\rcomma L_2\rcomma \dots\rcomma L_k \rclosed$ 
			with $k>1$, then
			$u(L):=\ropen L_1L_2\rcomma \dots\rcomma L_k \rclosed$ (and same for blue); if $k=1$ then set $u(L)=\ropen u(L_1)\rclosed$ 
			recursively. If $L$ is a product of shrubs
			$K_1\otimes \dots \otimes K_m$ and $K_i$ has a stem whereas $K_1,\dots,K_{i-1}$ have not, then
			$u(L):=K_1\otimes \dots \otimes u(K_i)\otimes \dots \otimes K_m$.
		\end{rmk}
		\begin{lem}\label{lem_order}
			Let $L$ be a shrubbery with at least one stem. Then 
			\[
				d(L)\in \pm u(L) +\bigoplus_{L'<u(L)}RL'
			\]
		\end{lem}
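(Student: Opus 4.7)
The plan is to expand $d(L)$ as a signed sum over all bottom-boundary strands of $L$, identify a distinguished term equal to $\pm u(L)$, and verify that each remaining contribution lies in $\bigoplus_{L'<u(L)} R L'$. I would start by decomposing $L = K_1\otimes\dots\otimes K_m$ into the canonical sequence of shrubs, letting $K_i$ be the leftmost shrub containing a stem, and then recursing into its outer-arch structure to locate the leftmost outer stem: if $K_i = \bopen L_1\bcomma\dots\bcomma L_k\bclosed$ with $k\geq 2$, this is the trivalent vertex between $L_1$ and $L_2$; if $k=1$, I would recurse inside $L_1$. Call the corresponding bottom-boundary strand $x_0$.

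Next I would verify that uprooting $x_0$ produces $\pm u(L)$: after placing a dot at the base of the stem, relation~\eqref{dotline} collapses the trivalent vertex above and merges the two neighboring arch pieces into a single smooth edge, reproducing the combinatorial description of $u(L)$ verbatim, while the sign is prescribed by $(-1)^{r+1}$. For each other strand $x\neq x_0$ I would simplify the uprooted diagram to its basis expansion: dots contribute via~\eqref{barbell} a scalar $\alpha$ times the shrubbery with that dot deleted, which is frequently zero in $\Lambda$ because deletion creates a forbidden subsequence among $(|,(),||,|)$; arch endpoints are resolved by sliding the base-dot along the arch, successively absorbing any interior trivalent vertices via~\eqref{dotline} and rearranging the shrub decomposition; and other stems produce a merge at a different outer position of $K_i$ or inside some $L_j$, by the same mechanism as for $x_0$.

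The decisive step is the order comparison in the sense of Definition~\ref{def_order}. Invoking the monoidal-lex property of Remark~\ref{rmk_ordermon}, I would rely on three observations: uprooting a non-leftmost outer stem of $K_i$ keeps $L_1$ as the first $L$-piece, so the first-arch size remains $\ell(L_1)<\ell(L_1)+\ell(L_2)$, strictly smaller than the first-arch size of $u(K_i)$; uprooting inside some $L_j$ modifies $L_j$ to a configuration strictly smaller than $u(L_j)$ by induction on $\ell(L)$, while preserving the outer structure; and uprooting any strand in a $K_j$ with $j<i$ (which is stem-free) either vanishes through a forbidden subsequence or produces a decomposition whose first differing shrub is strictly shorter. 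The hardest case will be an arch endpoint inside a multi-stem arch, where the sliding may absorb several trivalent vertices in succession and rearrange the diagram substantially; for this I plan to use Remark~\ref{rmk_breaking} to track the polynomial coefficients that appear and to induct on the nesting depth, confirming that every resulting basis shrubbery remains strictly less than $u(L)$.
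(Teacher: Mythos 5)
Your plan has the same overall shape as the paper's proof: expand $d(L)$ as a signed sum over bottom-boundary strands, single out the distinguished term $\pm u(L)$, and argue that every remaining term lies lower in the order of Definition~\ref{def_order}. But you skip the paper's first and most important move. The paper invokes Remark~\ref{rmk_ordermon} to reduce at the outset to the case where $L$ is a \emph{single} (say red) shrub $\ropen L_1\rcomma\dots\rcomma L_k\rclosed$, then proceeds by induction on $\ell(L)$ from the explicit length-5 base $\ropen\bdot\rcomma\bdot\rclosed$, organizing $d(L)$ into exactly four families of terms (the two outer arch endpoints, the $k-1$ inner stems giving $\ropen L_1\rcomma\dots\rcomma L_jL_{j+1}\rcomma\dots\rclosed$, and the $\ropen L_1\rcomma\dots\rcomma d(L_j)\rcomma\dots\rclosed$) and comparing first-arch sizes against $\ell(L_1)+\ell(L_2)$.

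Because you keep $L$ as a general product $K_1\otimes\dots\otimes K_m$, your case analysis must cover every $K_j$, but it only treats $j<i$, the stems of $K_i$, and the interiors $L_j$ inside $K_i$. You say nothing about strands in a $K_j$ with $j>i$, and this is where the argument actually becomes delicate. Take $L = K_1\otimes\rdot$ with $K_1=\ropen\bdot\rcomma\bdot\rclosed$: uprooting the final $\rdot$ contributes $\pm\alpha_s\,K_1$ (a barbell, relation~\eqref{barbell}), and the shrubbery $K_1$ has a single shrub of length $\ell(K_1)$, while $u(L)=u(K_1)\otimes\rdot$ has first shrub $u(K_1)$ of length $\ell(K_1)-1$. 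By Definition~\ref{def_order} this term is strictly \emph{bigger} than $u(L)$, so it cannot be absorbed into $\bigoplus_{L'<u(L)}RL'$. The single-shrub reduction removes this case entirely (there is no $j>i$); your version, by working with the full tensor decomposition while never discussing these strands, leaves the gap exposed. (The reduction step in the paper is also the point that deserves the most care, but at least it is explicit about what one must check.) A smaller issue: for the $d(L_j)$ contributions you invoke ``$u(L_j)$,'' but the inner shrubberies $L_j$ may well be stem-free, in which case $u(L_j)$ is undefined; the paper instead uses Remark~\ref{rmk_breaking} to show these terms expand as products of smaller shrubs or as shrubs whose first arch has size at most $\ell(L_1)$, which is precisely what makes the comparison against the first-arch size $\ell(L_1)+\ell(L_2)$ of $u(L)$ go through.
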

		\begin{proof}
			By Remark \ref{rmk_ordermon}, it is sufficient to deal with the case where $L$ is a single, say, red shrub.
			We can write $L$ in the form
			\[
				L=\ropen L_1\rcomma \dots \rcomma L_k \rclosed
			\]
			where the $L_j$'s are blue shrubberies. 
	%		Observe also that $u(L)$ is bigger than any shrubbery in $\Gamma_{\ge 2}$, hence we can 
	%		actually reduce to $\Gamma/\Gamma_{\ge 2}$.
			
			We proceed by induction.
			The smallest such $L$ is the following of length $5$
			\[
				\ropen \bdot \rcomma \bdot \rclosed =
				\begin{tikzpicture}[baseline=.3cm]
					\pic[red] at (0,0) {bridge={2*\d}{4*\d}{\h}};
					\foreach \i in {1,3}{%
						\pic[blue] at (\i*\d,0) {dot={\hdot}};
						}
					\draw[negative] (-.5*\d,0)--(4.5*\d,0);
				\end{tikzpicture}
			\]
			and one can easily work out that 
			\begin{equation}\label{eq_size5triv}
				d\big( \ropen \bdot \rcomma \bdot \rclosed
					\big)=
				\bdot\ropen\bdot\rclosed-
				\qn{2}{t}\rdot\ropen\bdot\rclosed
				+\ropen\bdot\bdot\rclosed
				-\qn{2}{s}\ropen\bdot\rclosed\rdot
				+\ropen\bdot\rclosed\bdot
			\end{equation}
			and $\ropen\bdot\bdot\rclosed=u(L)$ is bigger than all the other shrubberies appearing.
	
			For a general shrub $L$, the image $d(L)$ is the following linear combination of diagrams 
			obtained by uprooting strands
			\begin{multline*}		
				d(L)= \sigma_0 L_1\ropen L_2\rcomma \dots \rcomma L_k\rclosed + \sum_j \sigma_j \ropen L_1\rcomma \dots\rcomma L_jL_{j+1}\rcomma \dots\rcomma L_k\rclosed +\\
					-\ropen L_1 \rcomma \dots \rcomma L_{k-1} \rclosed L_k+
					\sum_j \sigma'_j \ropen L_1\rcomma \dots\rcomma d(L_j)\rcomma \dots\rcomma L_k\rclosed 
			\end{multline*}		
			for some signs $\sigma_j$ and $\sigma_j'$. The first and third terms, obtained by uprooting the first and last stems, are product of 
			smaller shrubs, so they are clearly lower than $u(L)$.
			
	 		Now, if $k>1$, then $u(L)$ is the term with $j=1$ in the first sum and its first arch has size
	 		$\ell(L_1)+\ell(L_2)$, hence it
	 		is bigger than that of all the other terms of that sum.
	 		
	 		For the second sum one can use the remark \ref{rmk_breaking} to see that, each term can be developed
	 		as a linear combination of shrubberies that are either
	 		products of smaller shrubberies or that have first arch with size $\le \ell(L_1)$.
	
	 		If $k=1$ then the shrubbery $L_1$ has to have at least one stem. The above sum reduces to
			\[
				d(L)= \sigma_0 L_1 \rdot - \rdot L_1 + \sigma ' \ropen d(L_1)\rclosed 
			\]
			and the first two terms are again clearly smaller than $u(L)$. 
			Then we can apply the induction hypothesis on $L_1$ and we conclude.
		\end{proof}
		\subsection{Weeding the garden}\label{subs_weed}
		In this section we will describe a simpler dg-module $\Cn$ homotopy equivalent to $C_n$, 
		which is much more handy and will allow us to compute cohomology in the antispherical category.
		In fact the direct computation on $C_n$ would still be 
		prohibitive, so we use Gaussian elimination to reduce it.
		
		Let $\tilde{\Gamma}$ be the free associative dg-$R$-algebra generated by elements $\tilde{\rho}_k$ and 
		$\tilde{\beta}_k$ for $k\in \ZZ_{>0}$. Both $\tilde{\rho}_0$  and $\tilde{\beta}_0$ will denote the unit and
		$\tilde{\rho}_k$ and $\tilde{\beta}_k$ are in degree $-2k+1$. The differential map is defined as follows
		\begin{align*}
			d(\tilde{\rho}_k)=	\begin{cases}
							\alpha_s \tilde{\rho}_0 & \text{if $k=1$}\\
							-\sum_{i=1}^{k-1}\qbc{k}{i}{s}\tilde{\rho}_{k-i}\tilde{\rho}_{i}
							+\sum_{i=1}^{k-1}\qbc{k-1}{i-1}{t}(\tilde{\rho}_i\tilde{\beta}_{k-i}+\tilde{\beta}_{k-1}\tilde{\rho}_i)
							& \text{if $k>1$}
						\end{cases}\\
			d(\tilde{\beta}_k)=	\begin{cases}
							-\alpha_t \tilde{\beta}_0 & \text{if $k=1$}\\
							\sum_{i=1}^{k-1}\qbc{k}{i}{t}\tilde{\beta}_{k-i}\tilde{\beta}_{i}
							-\sum_{i=1}^{k-1}\qbc{k-1}{i-1}{s}(\tilde{\beta}_i\tilde{\rho}_{k-i}+\tilde{\rho}_{k-1}\tilde{\beta}_i)
							& \text{if $k>1$}
						\end{cases}		
		\end{align*}
		and then extended $R$-linearly and according to \emph{right} Leibniz rule, which means, for monomials $x$ and $y$:
		\[
			d(xy)=xd(y)+(-1)^{|y|}d(x)y,
		\]
		where $|y|$ is the number of generators appearing in $y$.
		
		We define the \emph{word associated} to a monomial in $\tilde{\Gamma}$ as 
		\begin{align*}
			&\tilde{\rho}_k\mapsto \us_{2k-1}& &\tilde{\beta}_k \mapsto \ut_{2k-1}
		\end{align*}
		and then extended multiplicatively to a map of monoids 
		$\langle \tilde{\rho}_k,\tilde{\beta}_k \rangle_{k\in \mathbb{N}} \rightarrow \cword$.
		Then the span of the monomials whose associated word is a subword of $\us_{2n}$ 
		is a dg submodule of $\tilde{\Gamma}$, that will be denoted by $\tilde{C}_{n}$. 
		
		We can now state the main result of this section
		\begin{thm}\label{thm_antisph}
			The dg-module $C_n$ is homotopy equivalent to $\tilde{C}_{n}[-2n]$
		\end{thm}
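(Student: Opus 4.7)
The plan is to use Lemma \ref{lem_order} as the input to a Gaussian elimination that cancels every shrubbery containing at least one stem. What survives should be exactly the tensor monomials in $\rho_k, \beta_k$, and matching the induced differential on these monomials with the derivation defining $\tilde{C}_n$ will give the theorem.

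First I extend the partial order of Definition \ref{def_order} to a total order on the shrubbery basis of $C_n$, and let $\mathcal{S}$ denote the subset of shrubberies containing at least one stem. For each $L \in \mathcal{S}$, Lemma \ref{lem_order} gives $d(L) = \pm u(L) + (\text{strictly smaller terms})$ with unit leading coefficient, and I would iteratively apply Gaussian elimination, processing the shrubberies of $\mathcal{S}$ in order of decreasing position. When two elements $L_1 > L_2$ of $\mathcal{S}$ happen to share the same leading partner $u(L_1) = u(L_2) = M$, cancelling $(L_1, M)$ first modifies the differential on $L_2$ so as to kill its $M$-component, after which Lemma \ref{lem_order} combined with upper-triangularity produces a new, strictly smaller, unit-coefficient leading term in the modified $d(L_2)$, so the procedure continues until every shrubbery in $\mathcal{S}$ is eliminated.

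After all cancellations, the surviving basis is the set of well-tended shrubberies. Indeed, a stem-free shrubbery decomposes into shrubs each of the nested form $\bopen \ropen \bopen \cdots \bclosed \rclosed \bclosed$ terminating in a dot (absence of outer stems forces the defining index $m = 1$, and recursion pushes this to every depth); and any inside of such a shrub that is a tensor $M_1 M_2$ of two separate sub-shrubs arises precisely as the leading partner $u(L')$ of the one-stem shrubbery $\bopen M_1 \bcomma M_2 \bclosed \in \mathcal{S}$, so it too is eliminated. What remains are exactly the shrubberies obtained by tensoring the $\rho_k, \beta_k$, which is the underlying basis of $\tilde{C}_n[-2n]$.

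It remains to verify that the induced differential on this reduced basis matches the derivation defining $\tilde{C}_n$. Since Gaussian elimination is compatible with the tensor structure, and since $d$ on $\Gamma$ satisfies a right Leibniz rule with Koszul signs, the induced $\tilde d$ is automatically a derivation, so it suffices to compute it on the generators $\rho_k, \beta_k$. Applying $d$ to $\rho_k \in C_n$ by uprooting each of its $2k-1$ boundary strands and rewriting each result in the shrubbery basis (using the dot-pulling identity of Remark \ref{rmk_breaking}) produces a combination of shrubberies with stems, which must then be replaced recursively by their Gaussian-elimination gradients until landing on well-tended survivors. The main obstacle is to show that the coefficients produced by this cascade consolidate into the two-colored binomials $\qbc{k}{i}{s}$ and $\qbc{k-1}{i-1}{t}$ of the theorem rather than arbitrary rational expressions in the $[n]_s, [n]_t$. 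I would carry this out by induction on $k$: once $\tilde d$ is known on $\tilde\rho_i, \tilde\beta_i$ for $i < k$, the Pascal-type recursions among two-colored binomials---arising from the same telescoping identities that powered the proof of Lemma \ref{lem_prodroots}---should assemble the cascade output into the claimed form.
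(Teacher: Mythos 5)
Your overall strategy—Gaussian elimination on the shrubbery basis using the leading-term property of Lemma \ref{lem_order}, followed by identification of the surviving differential with that of $\tilde{C}_n$—is the same as the paper's. But there are two places where your proposal substitutes a plausibility argument for the step the paper actually makes precise.

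\textbf{Collision handling.} You are right that $u$ is not injective on the full set $\mathcal{S}$ of shrubberies with stems (for instance $u(\ropen\bdot\rcomma\bdot\bdot\rclosed)$ and $u(\ropen\bdot\bdot\rcomma\bdot\rclosed)$ both equal $\ropen\bdot\bdot\bdot\rclosed$), so iterating one cancellation at a time forces you to re-argue that a new $\pm 1$ leading term emerges after each elimination. But Lemma \ref{lem_order} only controls the \emph{original} differential $d$, not the modified one after Gaussian elimination, so the claim ``Lemma \ref{lem_order} combined with upper-triangularity produces a new, strictly smaller, unit-coefficient leading term'' is not justified. The paper sidesteps this entirely: Remark \ref{rmk_divis} uses the fact that a non--well-tended shrubbery $K$ determines a unique complete ``parent'' $L$ (by inserting stems at every non-alternation), and then assigns $K$ to $\mathcal{E}$ or $\mathcal{F}$ depending on whether the leftmost outer stem of $L$ is present in $K$. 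Restricted to each such family, $u:\mathcal{E}_L\to\mathcal{F}_L$ \emph{is} a bijection, so the differential block $E^p\to F^{p+1}$ is a genuine square, unit-upper-triangular matrix and a \emph{single} block Gaussian elimination suffices. In your example above, the paper pairs $\ropen\bdot\rcomma\bdot\bdot\rclosed$ with $\ropen\bdot\bdot\bdot\rclosed$, and pairs $\ropen\bdot\rcomma\bdot\rcomma\bdot\rclosed$ with $\ropen\bdot\bdot\rcomma\bdot\rclosed$ (the latter lands in $\mathcal{F}$, not $\mathcal{E}$), so no collision ever arises.

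\textbf{Coefficient identification.} Tracing the cascade of eliminations on $d(\rho_k)$ and hoping that ``Pascal-type recursions... should assemble the cascade output into the claimed form'' is exactly the computation the paper avoids. The paper instead observes that $\tilde d(\rho_k)$ must be a sum of two-shrub well-tended terms not both blue (so it has the shape $\sum\lambda^{ss}_{k,i}\rho_i\rho_{k-i}+\sum\lambda^{st}_{k,i}\rho_i\beta_{k-i}+\sum\lambda^{ts}_{k,i}\beta_i\rho_{k-i}$), extracts one easy normalization $\lambda^{st}_{k,1}=1$, and then uses the constraint $\tilde d^{\,2}=0$ on specific monomials to force all remaining coefficients to be the stated two-colour binomials. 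This makes the induction purely algebraic and bypasses any analysis of $(\phi^p)^{-1}$. Your induction-on-$k$ plan is not wrong in spirit, but without the $\tilde d^{\,2}=0$ trick it is unclear how the rational expressions coming from $g\circ\phi^{-1}\circ f$ would resolve into polynomial binomial coefficients.
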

		\begin{rmk}\label{rmk_divis}
			Consider a complete shrubbery $L$, which is not well-tended. 
			There will be a nonempty set of stems. The shrubberies obtained by uprooting some of 
			these stems are still not well-tended, and we can divide them into two classes.
			The class $\mathcal{E}_L$ will contain those in which the leftmost outer stem has not been 
			uprooted, and $\mathcal{F}_L$ the others.
			Then the function $u$ gives a bijection from $\mathcal{E}_L$ to $\mathcal{F}_L$.
		\end{rmk}
		The following is an example where $L$ has 3 stems.
		\begin{exa}
			Take
			\[
				L=
				\begin{tikzpicture}[baseline=.3cm,x=.4cm]
					\pic[blue] at (0,0) {bridge={2}{8}{1.5}};
					\pic[red] at (3,0) {bridge={2}{4}{1}};
					\pic[red] at (9,0) {bridge={2}{4}{1}};
					\foreach \i in {1}{%
						\pic[red] at (\i,0) {dot={\hdot}};%
						}
					\foreach \i in {4,6,10,12}{%
						\pic[blue] at (\i,0) {dot={\hdot}};%
						}
					\draw[negative] (-.5,0)--(13.5,0);
				\end{tikzpicture}
			\]
			then the two classes and the bijection are described by the following picture
			\begin{center}
				\begin{tikzpicture}%[scale=.7,transform shape]
					\def\u{1.3cm}\def\v{2cm}\def\w{.8*\u}
					\draw[dashed,violet,xshift=.5*\u-1.1*\v,yshift=\w+1.1*\v,rotate=45] (0,0) ellipse ({1.8*\u} and {.9*\u});				
					\draw[dashed,green,xshift=.5*\u+.1*\v,yshift=\w-.1*\v,rotate=45] (0,0) ellipse ({1.8*\u} and {.9*\u});
					\foreach \j in {0,1}{%
						\foreach \k in {0,1}{%
							\node (a) at ({-\v+\k*\u},{\v+\j*\w+\k*\w}) {};
							\node[violet,anchor=south east,font=\small] at (a) {$L_{1\j\k}$};
							\node (b) at ({\k*\u},{\j*\w+\k*\w}) {};
							\node[green,anchor=north west,font=\small] at (b) {$L_{0\j\k}$};
							\fill[violet] (a) circle (1pt);\fill[green] (b) circle (1pt);
							\draw[-latex] (a)--(b);
						}
					}
					\node[green,xshift=.1*\v,yshift=-.1*\v,font=\large] at (1.5*\u,2.7*\u) {$\mathcal{F}_L$};
					\node[violet,xshift=-1.1*\v,yshift=1.1*\v,font=\large] at (1.5*\u,2.7*\u) {$\mathcal{E}_L$};
					\node[xshift=-.2*\v,yshift=-.2*\v] at (-.6*\v,.5*\v) {$u$};
				\end{tikzpicture}
				\begin{tikzpicture}[scale=.6,transform shape,baseline=-5.5cm]
					\def\d{.3cm}\def\h{.6cm}\def\hdot{.24}\def\y{1.2cm}\def\hw{.5*\h}
					\begin{scope}
						\node[anchor=east] at (-\d,\hw) {$L=L_{111}=$};
						\pic[blue] at (0,0) {bridge={2*\d}{8*\d}{2*\h}};
						\pic[red] at (3*\d,0) {bridge={2*\d}{4*\d}{1.5*\h}};
						\pic[red] at (9*\d,0) {bridge={2*\d}{4*\d}{1.5*\h}};
						\foreach \i in {1}{%
							\pic[red] at (\i*\d,0) {dot={\hdot}};%
							}
						\foreach \i in {4,6,10,12}{%
							\pic[blue] at (\i*\d,0) {dot={\hdot}};%
							}
						\draw[negative] (-\d,0)--(14*\d,0);
					\end{scope}
					\begin{scope}[yshift=-\y]
						\node[anchor=east] at (-\d,\hw) {$L_{110}=$};
						\pic[blue] at (0,0) {bridge={2*\d}{8*\d}{2*\h}};
						\pic[red] at (3*\d,0) {bridge={2*\d}{4*\d}{1.5*\h}};
						\pic[red] at (9*\d,0) {arch={3*\d}{1.5*\h}};
						\foreach \i in {1}{%
							\pic[red] at (\i*\d,0) {dot={\hdot}};%
							}
						\foreach \i in {4,6,10,11}{%
							\pic[blue] at (\i*\d,0) {dot={\hdot}};%
							}
						\draw[negative] (-\d,0)--(13*\d,0);
					\end{scope}
					\begin{scope}[yshift=-2*\y]
						\node[anchor=east] at (-\d,\hw) {$L_{101}=$};
						\pic[blue] at (0,0) {bridge={2*\d}{7*\d}{2*\h}};
						\pic[red] at (3*\d,0) {arch={3*\d}{1.5*\h}};
						\pic[red] at (8*\d,0) {bridge={2*\d}{4*\d}{1.5*\h}};
						\foreach \i in {1}{%
							\pic[red] at (\i*\d,0) {dot={\hdot}};%
							}
						\foreach \i in {4,5,9,11}{%
							\pic[blue] at (\i*\d,0) {dot={\hdot}};%
							}
						\draw[negative] (-\d,0)--(13*\d,0);
					\end{scope}
					\begin{scope}[yshift=-3*\y]
						\node[anchor=east] at (-\d,\hw) {$L_{100}=$};
						\pic[blue] at (0,0) {bridge={2*\d}{7*\d}{2*\h}};
						\pic[red] at (3*\d,0) {arch={3*\d}{1.5*\h}};
						\pic[red] at (8*\d,0) {arch={3*\d}{1.5*\h}};
						\foreach \i in {1}{%
							\pic[red] at (\i*\d,0) {dot={\hdot}};%
							}
						\foreach \i in {4,5,9,10}{%
							\pic[blue] at (\i*\d,0) {dot={\hdot}};%
							}
						\draw[negative] (-\d,0)--(12*\d,0);
					\end{scope}
					\begin{scope}[yshift=-4*\y]
						\node[anchor=east] at (-\d,\hw) {$L_{011}=$};
						\pic[blue] at (0,0) {arch={7*\d}{2*\h}};
						\pic[red] at (2*\d,0) {bridge={2*\d}{4*\d}{1.5*\h}};
						\pic[red] at (8*\d,0) {bridge={2*\d}{4*\d}{1.5*\h}};
						\foreach \i in {1}{%
							\pic[red] at (\i*\d,0) {dot={\hdot}};%
							}
						\foreach \i in {3,5,9,11}{%
							\pic[blue] at (\i*\d,0) {dot={\hdot}};%
							}
						\draw[negative] (-\d,0)--(13*\d,0);
					\end{scope}
					\begin{scope}[yshift=-5*\y]
						\node[anchor=east] at (-\d,\hw) {$L_{010}=$};
						\pic[blue] at (0,0) {arch={7*\d}{2*\h}};
						\pic[red] at (2*\d,0) {bridge={2*\d}{4*\d}{1.5*\h}};
						\pic[red] at (8*\d,0) {arch={3*\d}{1.5*\h}};
						\foreach \i in {1}{%
							\pic[red] at (\i*\d,0) {dot={\hdot}};%
							}
						\foreach \i in {3,5,9,10}{%
							\pic[blue] at (\i*\d,0) {dot={\hdot}};%
							}
						\draw[negative] (-\d,0)--(12*\d,0);
					\end{scope}
					\begin{scope}[yshift=-6*\y]
						\node[anchor=east] at (-\d,\hw) {$L_{001}=$};
						\pic[blue] at (0,0) {arch={6*\d}{2*\h}};
						\pic[red] at (2*\d,0) {arch={3*\d}{1.5*\h}};
						\pic[red] at (7*\d,0) {bridge={2*\d}{4*\d}{1.5*\h}};
						\foreach \i in {1}{%
							\pic[red] at (\i*\d,0) {dot={\hdot}};%
							}
						\foreach \i in {3,4,8,10}{%
							\pic[blue] at (\i*\d,0) {dot={\hdot}};%
							}
						\draw[negative] (-\d,0)--(12*\d,0);
					\end{scope}
					\begin{scope}[yshift=-7*\y]
						\node[anchor=east] at (-\d,\hw) {$L_{000}=$};
						\pic[blue] at (0,0) {arch={6*\d}{2*\h}};
						\pic[red] at (2*\d,0) {arch={3*\d}{1.5*\h}};
						\pic[red] at (7*\d,0) {arch={3*\d}{1.5*\h}};
						\foreach \i in {1}{%
							\pic[red] at (\i*\d,0) {dot={\hdot}};%
							}
						\foreach \i in {3,4,8,9}{%
							\pic[blue] at (\i*\d,0) {dot={\hdot}};%
							}
						\draw[negative] (-\d,0)--(11*\d,0);
					\end{scope}
					\def\d{.5cm}\def\h{1cm}\def\hdot{.4cm}
				\end{tikzpicture}
			\end{center}	
			Here the sequences of 0's and 1's that label the shrubberies indicate which stems have been uprooted.
			\end{exa}
			Let us then prove the theorem.
			\begin{proof}
				If one makes the subdivision of Remark \ref{rmk_divis} for all complete non well-tended shrubberies $L$,
				one gets two classes of shrubberies,
				$\mathcal{E}$ and $\mathcal{F}$, which span graded submodules
				\begin{align*}
					&E=\bigoplus_{L\in \mathcal{E}}R\cdot L&
					&\text{and}&
					&F=\bigoplus_{L\in \mathcal{F}}R\cdot L.				
				\end{align*}
				%	where $E^i$ and $F^i$ are spanned by the shrubberies of $\mathcal{E}$ and $\mathcal{F}$ in degree $i$. 
				Let now $A=\bigoplus_p A^p$ be the span of the well-tended shrubberies. The dg-module $C_n$ 
				will then look as follows
				\[
					\begin{tikzcd}[row sep=tiny,column sep=huge]
						\dots \ar[r]\ar[rdd,lightgray]						& A^{p-1} \ar[r,"a^{p-1}"]\ar[rdd,lightgray]								& A^{p} \ar[r,"a^p"]\ar[rdd,lightgray]															& A^{p+1} \ar[r]\ar[rdd,lightgray]																			& \dots 		 \\
																		& \oplus															& \oplus																					& \oplus																					& 			   \\
						\dots \ar[r,lightgray]\ar[ruu]						& E^{p-1} \ar[r,lightgray]\ar[ruu,near end,"g^{p-1}"]						& E^p \ar[r,lightgray]\ar[ruu,near end,"g^p"]												& E^{p+1} \ar[ruu]\ar[r,lightgray]																& \dots				\\
																		& \oplus															& \oplus																					& \oplus																					& 				\\
						\dots \ar[r,lightgray]\ar[ruu,lightgray]					& F^{p-1} \ar[r,lightgray]\ar[ruu,lightgray]\ar[from=luu,green]\ar[from=luuuu]	& F^p \ar[r,lightgray]\ar[ruu,lightgray]\ar[from=luu,green,swap,"\phi^{p-1}"]\ar[from=luuuu,"f^{p-1}"]	& F^{p+1} \ar[r,lightgray]\ar[ruu,lightgray]\ar[from=luu,green,swap,"\phi^{p}"]\ar[from=luuuu,"f^p"]	& \dots \ar[from=luu,green]\ar[from=luuuu]		\\
					\end{tikzcd}
				\]
				We claim that all the morphisms $\phi^p:E^p\rightarrow F^{p+1}$ (green in the picture) 
				are isomorphisms.
				In fact, by the remark \ref{rmk_divis}, the morphism $\phi^p$ is expressed by a 
				square matrix with respect to the bases of $E^{p}$ and $F^{p+1}$, and by lemma 
				\ref{lem_order}, this matrix is upper triangular with $\pm 1$ on the diagonal.
			Hence, by Gaussian elimination, the dg-module $C_n$ is homotopy equivalent to
			\begin{equation}\label{eq_newcompl}
				\begin{tikzcd}[baseline=.3cm]
					\cdots \ar[r]	& A^{p-1} \ar[r,"\tilde{d}^{p-1}"]	& A^{p} \ar[r,"\tilde{d}^{p}"]	& A^{p+1} \ar[r]	& \cdots \\
				\end{tikzcd}
			\end{equation}
			where $\tilde{d}^p=a^{p}-g^{p}(\phi^{p})^{-1}f^{p}$.
			
			Recall now that $A$ is spanned by well-tended shrubberies which are
			tensor products of well-tended shrubs of the form $\rho_k$ and $\beta_k$, such that the 
			starting word is a subword of $\sigma_{2n}$. Hence the map sending 
			\begin{align*}
				\tilde{\rho}_k \mapsto \rho_k \\
				\tilde{\beta}_k \mapsto \beta_k
			\end{align*}
			and extended multiplicatively, defines an isomorphism of graded modules between 
			$\tilde{C}_{n}[-2n]$ and \eqref{eq_newcompl}.
			We have to show that also the differential maps agree.
			
			Observe that, as $d$ is compatible with the monoidal structure, and tensoring with elements in
			$A$ stabilizes $A$, $E$ and $F$, then also the new differential $d$ is compatible with the
			monoidal structure. Hence it is sufficient to deal with the case of a single (well-tended)
			shrub. The case $k=1$ is immediate. For $k>1$, we have to prove that 
			\[
				\tilde{d}(\rho_k)=	-\sum_{i=1}^{k-1}\qbc{k}{i}{s}\rho_{k-i} \rho_{i}
										+\sum_{i=1}^{k-1}\qbc{k-1}{i-1}{t}(\rho_i \beta_{k-i}+ \beta_{k-i} \rho_i),
			\]
			and the analogous equality (with signs reversed) for $\tilde{d}(\beta_k)$. 
			
			When $k=2$ we have directly
			\[
				\rho_{2}=
				\ropen \bdot \rclosed \overset{d}{\mapsto} \bdot\rdot - \qn{2}{s}\rdot\rdot
				+\rdot\bdot
				=\beta_1\rho_1 - \qn{2}{s}\rho_{1}\rho_{1}+\rho_1\beta_1.
			\]
			When $k=3$, we have
			\begin{multline*}
				\rho_{3}=
				\ropen \bopen \rdot \bclosed \rclosed
				\overset{d}{\mapsto}
				\bopen\rdot\bclosed\rdot+\rdot\ropen\bdot\rclosed-\qn{2}{t}\ropen\bdot\bdot\rclosed
				+\ropen\bdot\rclosed\rdot+\rdot\bopen\rdot\bclosed=	\\
				=\beta_2\rho_1+\rho_1\rho_2-\qn{2}{t}\ropen\bdot\bdot\rclosed+\rho_2\rho_1+\rho_1\beta_2.
			\end{multline*}
			This can be written, using the notation from the above diagram,
			\begin{align*}
				&a(\rho_3)=\beta_2\rho_1+\rho_1\rho_2+\rho_2\rho_1+\rho_1\beta_2,&
				&f(\rho_3)=-\qn{2}{t}\ropen\bdot\bdot\rclosed.
			\end{align*}
			On the other hand, equation \eqref{eq_size5triv} implies
			\begin{align*}
				&g\big(\ropen\bdot\rcomma\bdot\rclosed\big)=\beta_1\rho_2-\qn{2}{s}\rho_1\rho_2
				-\qn{2}{s}\rho_2\rho_1+\rho_2\beta_1,&
				&\phi\big(\ropen\bdot\rcomma\bdot\rclosed\big)=\ropen\bdot\bdot\rclosed.
			\end{align*}
			Hence
			\begin{align*}
				\tilde{d}(\rho_3)&=
						\beta_2\rho_1+\rho_1\rho_2+\rho_2\rho_1+\rho_1\beta_2
							+\qn{2}{t}\big(\beta_1\rho_2-\qn{2}{s}\rho_1\rho_2
							-\qn{2}{s}\rho_2\rho_1+\rho_2\beta_1\big)\\
				&=
					\begin{multlined}[t]
						(\rho_1\beta_2+\beta_2\rho_1)+\qn{2}{t}(\rho_2\beta_1+\beta_1\rho_2)
						-(\qn{2}{t}\qn{2}{s}-1)\rho_1\rho_2+\\-(\qn{2}{t}\qn{2}{s}-1)\rho_2\rho_1
					\end{multlined}\\
				&=(\rho_1\beta_2+\beta_2\rho_1)+\qn{2}{t}(\rho_2\beta_1+\beta_1\rho_2)
				-\qn{3}{s}\rho_1\rho_2-\qn{3}{s}\rho_2\rho_1\\
				&=(\rho_1\beta_2+\beta_2\rho_1)+\qn{2}{t}(\rho_2\beta_1+\beta_1\rho_2)
				-\qbc{3}{1}{s}\rho_1\rho_2-\qbc{3}{2}{s}\rho_2\rho_1
			\end{align*}
			Now we want to proceed by induction.
			One can see, along the lines of Remark \ref{rmk_breaking}, that the image via the 
			differential map of any shrub is a sum of shrubberies with at most two shrubs. In particular
			$d(\rho_k)$ is such a sum, and looking at the starting word one can see
			that the only terms contained in $F$ have one shrub.
			This implies that also $\tilde{d}(\rho_k)$ is a sum of (well-tended) shrubberies with 
			at most two shrubs. But from the form of well-tended shrubberies we deduce that 
			that all these terms have exactly two shrubs which are not both blue.
			Hence we can write
			\[
				\tilde{d}(\rho_k)=\sum_{i=1}^{k-1} \lambda^{ss}_{k,i} \rho_i \rho_{k-i}
				+\sum_{i=1}^{k-1} \lambda^{st}_{k,i} \rho_i \beta_{k-i}
				+\sum_{i=1}^{k-1} \lambda^{ts}_{k,i} \beta_i \rho_{k-i},
			\]
			In the same way, we argue that $\tilde{d}(\beta_k)$ is a sum of shrubberies with at most two shrubs
			that are not both red, hence:
			\[
				\tilde{d}(\beta_k)=\sum_{i=1}^{k-1} \mu^{ss}_{k,i} \beta_i \beta_{k-i}
				+\sum_{i=1}^{k-1} \mu^{st}_{k,i} \rho_i \beta_{k-i}
				+\sum_{i=1}^{k-1} \mu^{ts}_{k,i} \beta_i \rho_{k-i}.
			\]
			We want to show that 
			\begin{align*}
				&\lambda^{ss}_{k,i}=-\qbc{k}{i}{s},& &\lambda^{st}_{k,i}=\qbc{k-1}{i-1}{t},& &\lambda^{ts}_{k,i}=\qbc{k-1}{k-i-1}{t},& \\
				&\mu^{tt}_{k,i}=\qbc{k}{i}{t},& &\mu^{st}_{k,i}=-\qbc{k-1}{k-i-1}{s},& &\mu^{ts}_{k,i}=-\qbc{k-1}{i-1}{s}.& \\
			\end{align*}
			The differential preserves the horizontal symmetry of the well-tended shrubberies, hence 
			$\lambda^{st}_{k,i}=\lambda^{ts}_{k,k-i}$ and $\mu^{st}_{k,i}=\mu^{ts}_{k,k-i}$. So it is 
			sufficient to determine only one version for the mixed coefficients.
			Notice also that we can easily get, for all $k$
			\begin{equation}\label{eq_lambdaeq1}
				\lambda^{st}_{k,1}=1			
			\end{equation}
			because this is the coefficient 
			of $\rho_1\beta_k$ in the expansion of $a(\rho_k)$ and this shrubbery cannot appear in the expansion
			of any image via $g$ of a shrub in $E$.
			
			Now we use that $\tilde{d}^2=0$ to deduce all the other coefficients.
			The coefficient of $\rho_1 \beta_j \beta_{k-j-1}$ in $\tilde{d}^2(\rho_k)$ is
			\[
				0=\lambda^{st}_{k,1}\mu^{tt}_{k-1,j} - \lambda^{st}_{k,j+1}\lambda^{st}_{j+1,1}=
				\qbc{k-1}{j}{t} - \lambda^{st}_{k,j+1},
			\]
			where for the second equality we used induction and \eqref{eq_lambdaeq1}.
			We deduce that 
			\[
				\lambda^{st}_{k,j+1}=\qbc{k-1}{j}{t}. 
			\]
			Similarly we get $\mu^{ts}_{k,j+1}=-\qbc{k-1}{j}{s}$.
			
			Secondly, the coefficient of $\rho_1\rho_1\beta_{k-2}$ is 
			\[
				0=\lambda^{ss}_{k,1}\lambda^{st}_{k-1,1} - \lambda^{st}_{k,2}\lambda^{ss}_{2,1} +
				\lambda^{st}_{k,1}\mu^{st}_{k-1,k-2}=
				\lambda^{ss}_{k,1} + \qn{k-1}{t}\qn{2}{s} -\qn{k-2}{s},
			\]
			where we used again \eqref{eq_lambdaeq1} and the values of the coefficients known by induction.
			We deduce $\lambda^{ss}_{k,1}=-\qn{k-1}{t}\qn{2}{s} +\qn{k-2}{s}=-\qn{k}{s}$.
			
			Finally, the coefficient of $\rho_1 \rho_{i-1} \rho_{k-i}$ is
			\[
				0=\lambda^{ss}_{k,1}\lambda^{ss}_{k-1,i-1} - \lambda^{ss}_{k,i} \lambda^{ss}_{i,1}
			\]
			and we get
			\[
				\lambda^{ss}_{k,i}=-\frac{\qn{k}{s}\qbc{k-1}{i-1}{s}}{\qn{i}{s}}=-\qbc{k}{i}{s}.
			\]
			In a similar way we get $\mu^{tt}_{k,i}=\qbc{k}{i}{t}$. This concludes the proof.
			\end{proof}
			Passing to the limit, one obtains
			\begin{cor}
				The dg-algebra $\Gamma$ is homotopy equivalent to $\tilde{\Gamma}$.
			\end{cor}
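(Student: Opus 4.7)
The plan is to run the Gaussian elimination of Theorem \ref{thm_antisph} directly on the full dg-algebra $\Gamma$, rather than on its subspaces $C_n$. All the ingredients of that proof are local to individual shrubberies and make sense without any restriction to subwords of $\us_{2n}$: the partition of the basis $\mathcal{L}$ into well-tended shrubberies ($\mathcal{A}$), the classes $(\mathcal{E}_L, \mathcal{F}_L)$ of Remark \ref{rmk_divis}, the uprooting bijection $u\colon \mathcal{E} \to \mathcal{F}$, and the unitriangularity of Lemma \ref{lem_order}. Writing $\Gamma = A \oplus E \oplus F$ accordingly, the differential decomposes exactly as in the proof of Theorem \ref{thm_antisph}, and the component $\phi\colon E \to F$ is unitriangular with respect to the order of Definition \ref{def_order}.

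The only real difference with the bounded case is that $E$ and $F$ are no longer finitely generated in a given cohomological degree, so one cannot simply invoke finite-dimensional Gaussian elimination to invert $\phi$. The fix is to observe that the order of Definition \ref{def_order} is locally finite (only finitely many shrubberies of a given length lie below a given one) and that the differential of a shrubbery is a finite sum, so $\phi^{-1}$ can be constructed explicitly by back-substitution. Gaussian elimination then yields a homotopy equivalence $\Gamma \simeq (A, \tilde{d})$ where $\tilde{d} = a - g\phi^{-1}f$, exactly as in the bounded case.

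Finally, one identifies $(A, \tilde{d})$ with $\tilde{\Gamma}$ via the assignment $\tilde{\rho}_k \mapsto \rho_k$, $\tilde{\beta}_k \mapsto \beta_k$, which is an isomorphism of graded $R$-algebras since $A$ is freely generated by the well-tended shrubs. That it intertwines the differentials was already established in the proof of Theorem \ref{thm_antisph}: the computation of the coefficients $\lambda^{\bullet\bullet}_{k,i}$ and $\mu^{\bullet\bullet}_{k,i}$ there was purely formal, using only $\tilde{d}^2 = 0$ together with a few base cases, and made no use of any finite boundary. Alternatively, one can present $\Gamma$ as the directed colimit of the $C_n[2n]$'s (using the Remark that every Coxeter word of length $k$ appears as a subword of $\us_{2n}$ once $n \geq k$) and $\tilde{\Gamma}$ as the corresponding colimit of the $\tilde{C}_n$'s, and then glue the homotopy equivalences of Theorem \ref{thm_antisph}. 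The main obstacle I foresee in either approach is the careful verification that the infinite unitriangular matrix $\phi$ admits a well-defined inverse (respectively, that the bounded equivalences are compatible with the transition maps up to compatible homotopies), which is standard but slightly delicate combinatorially.
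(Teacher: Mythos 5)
Your proposal is correct, and it actually offers two valid routes, one of which matches the paper. The paper itself is terse here: after proving Theorem \ref{thm_antisph} it simply says ``passing to the limit, one obtains'' the corollary, which is your second alternative (presenting $\Gamma$ as the colimit of the $C_n[2n]$'s via the observation that every Coxeter word of length $k$ is a subword of $\us_{2n}$ for $n\geq k$). Your primary route --- running the Gaussian elimination of Theorem \ref{thm_antisph} directly on $\Gamma$ --- is a genuinely different and arguably more self-contained argument, since it avoids the (admittedly minor) question of whether the bounded homotopy equivalences are compatible with the inclusions $C_n\hookrightarrow C_{n+1}$ up to coherent homotopy. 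All the ingredients do indeed carry over verbatim: Remark \ref{rmk_divis} and Lemma \ref{lem_order} are local to a single shrubbery, and the computation of the coefficients $\lambda^{\bullet\bullet}_{k,i}$, $\mu^{\bullet\bullet}_{k,i}$ only used the relation $\tilde d^2=0$ and a few small base cases.

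One point in your write-up is weaker than it needs to be: you flag the ``infinite unitriangular matrix'' as the main delicate step and propose back-substitution, but there is in fact no infinite-dimensionality issue at all. The piece of $\Gamma$ in cohomological degree $-k$ is spanned by shrubberies on Coxeter words of length exactly $k$, of which there are only finitely many; the differential raises the cohomological degree by one (each uprooting deletes one boundary point), and the bijection $u$ likewise lowers length by one. Hence each $\phi^p\colon E^p\to F^{p+1}$ is a map between finitely generated free $R$-modules, square and unitriangular by Lemma \ref{lem_order}, so it is invertible exactly as in the bounded case. No convergence argument or local-finiteness of the order is needed; the Gaussian elimination applies degree by degree with no modification.
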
		
		\section{The antispherical category}
			The above reduction allows to determine morphism spaces between Wakimoto sheaves in the 
			\textit{anti-spherical category}. Let us first recall the definition of the latter.
			\subsection{Generalities}
			In \cite{RW}, Riche and Williamson describe a diagrammatic categorification of the anti-spherical 
			module of the affine Hecke algebra with respect to the finite Weyl group. 
			Their construction actually extends (see remark \cite[Rmk 4.10]{RW}) to any Coxeter system with respect to
			a parabolic subgroup $W_I$ determined by a subset $I\subset S$.
			Libedinsky and Williamson also define, in \cite{LibWil_anti}, a similar category using the geometric 
			representation of the group $W$.
	
			Let ${}^I\!W$ be the set of minimal length representatives of the left cosets of $W_I$ in $W$.
			Let us call \textit{$I$-words} those words starting with an element of $I$ 
			(called \textit{$I$-sequences} in \cite{LibWil_anti}).
			\begin{dfn}
				The \emph{Bott-Samelson anti-spherical category} $\Da_{\mathrm{BS}}$ 
				is the quotient of the Bott-Samelson diagrammatic Hecke 
				category by those morphisms which are of the form $\gamma\cdot \phi$ with
				$\gamma\in \hh^*$ and those which factor through $I$-words.			
			\end{dfn}
			This is naturally a graded $\Ra$-linear\footnote{Here we follow \cite{RW}. One could also 
			quotient only by the roots $\alpha_s$, with $s\in I$ as done in \cite{LibWil_anti}.}
			category.
	
			To clarify which category we are considering, the objects in $\Da_{\mathrm{BS}}$ will be 
			denoted by ${}^I\! B_{\uw}$. Then one takes $\Da$ to be the Karoubi envelope of
			$\Da_{\mathrm{BS}}$.
			Over a complete local ring, one can also define $\Da$ to be the quotient of $\D$ by the 
			ideal generated by the indecomposale objects $B_x(k)$ for all $x\notin {}^I\!W$. Then one 
			can see that the two definitions agree (see \cite[Proposition\ 3.2]{LibWil_anti}\footnote{The proof 
			there is made over $\mathbb{R}$ but the only property of the Kazhdan-Lusztig basis which is 
			used is also shared by the $p$-canonical basis.}).
			
			One has a natural full, essentially surjective functor
			\begin{equation}\label{eq_functanti}
				{}^I\!(-):\D_{(\mathrm{BS})}\rightarrow \Da_{(\mathrm{BS})}	
			\end{equation}
			sending the object $B_{\uw}$ to ${}^I\!B_{\uw}$.
			The category $\Da$ (and already $\Da_{\mathrm{BS}}$) is endowed with a natural right action of the 
			Hecke category given by ${}^I\! B_{\uw}\cdot B_s={}^I\! B_{\uw \underline{s}}$.
			
			Notice that in the $\tilde{A_1}$ case the only interesting choices for $I$ are $\{s\}$ or $\{t\}$.
			\subsection{Double leaves bases}
			Both \cite[Lemma\ 4.7, Remark\ 4.10]{RW} and \cite[Theorem\ 3.7]{LibWil_anti} prove that 
			a certain subset of the light leaves maps (or rather, the double leaves maps obtained from them)
			are spanning sets for the morphism spaces in the categorified anti-spherical module.
	
			Namely, given a Coxeter word $\uw=s_1\cdots s_k$, we say that a 01-sequence 
			$\boe$ \textit{avoids} $K\subset W$, when,
			for all $i=0,1,\dots, k-1$, we have $\uw_{\le i}^{\boe_{\le i}}s_{i+1}\notin K$.
			This means that in the corresponding Bruhat stroll we never even consider to pass through $K$. 
			Then the considered light leaves maps are the $L_{\uw,\boe}$ such that $\boe$ avoids $W\setminus {}^I\! W$.
			\begin{rmk}
				Consider the spaces $\Hom(B_{\uw},\un)$ in $\tilde{A_1}$. One can rather easily see that 
				01-sequences avoiding $W\setminus {}^I\! W$ 
				correspond precisely to blue (red) shrubberies if $I=\{s\}$ (respectively $I=\{t\}$). So
				they span, over $\Ra$, the dg-modules $\Casph$ and $\Gammaa$.
			\end{rmk}	
			Furthermore, both \cite{RW} and \cite{LibWil_anti} show that these sets form bases for the 
			realizations considered in those works.
			It is actually not difficult to show directly that they form a basis of $\Hom(B_{\uw},\un)$, 
			for all realizations of $\tilde{A_1}$.
			\begin{lem}\label{lem_basis}
				Blue (respectively red) shrubberies form an $\Ra$-basis for $\Hom(B_{\uw},\un)$,
				when $I=S_{\mathrm{f}}=\{s\}$ (or $I=\{t\}$ respectively).
			\end{lem}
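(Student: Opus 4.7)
The spanning of $\Hom_{\Da}(B_\uw, \un)$ over $\Ra$ by (the images of) blue shrubberies is already recorded in the remark preceding the lemma, as a consequence of \cite{RW} and \cite{LibWil_anti}; hence the content of the lemma is the linear independence. The plan is to proceed in two steps: first, to identify a canonical surjection $\varphi$ from the free $\Ra$-module on blue shrubberies onto $\Hom_{\Da}(B_\uw, \un)$; and then to prove its injectivity by induction on $\ell(\uw)$.

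For the first step, I would verify that every non-blue shrubbery factors through an $I$-word. Decomposing such a shrubbery as $L = L_1 \otimes L_2 \otimes L_3$, where $L_2$ is the leftmost red shrub and $L_1$ is the (possibly empty) blue prefix, the monoidal interchange law gives
\[
	L = (L_2 \otimes L_3) \circ (L_1 \otimes \id_{B_{\uw_2 \uw_3}}).
\]
The intermediate Bott--Samelson $B_{\uw_2 \uw_3}$ is an $I$-word, because the starting word $\uw_2$ of the red shrub $L_2$ begins with $s$. Hence $L$ vanishes in $\Hom_{\Da}$. Together with the quotient by the right $R_+$-action, this produces the desired surjection $\varphi$.

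For the second step I would induct on $\ell(\uw)$, the cases $\ell(\uw)\le 1$ being immediate. Writing $\uw = \uw' \cdot s'$, the biadjunction of $B_{s'}$ identifies $\Hom_{\Da}(B_\uw, \un)$ with $\Hom_{\Da}(B_{\uw'}, B_{s'})$ up to degree shifts. When $s' = s \in I$ the image of $B_s$ vanishes in $\Da$, so $\Hom_{\Da}(B_\uw, \un) = 0$; correspondingly no blue shrubbery can have starting word ending in $s$ (its terminal shrub would necessarily be red), so injectivity holds trivially. When $s' = t$, the short exact sequence $\un(-1) \hookrightarrow B_t \twoheadrightarrow \un(1)$ of $R$-bimodules induces a filtration of $\Hom_{\Da}(B_{\uw'}, B_t)$ by two shifted copies of $\Hom_{\Da}(B_{\uw'}, \un)$, to which the inductive hypothesis applies.

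The main obstacle will be to match this filtration with the combinatorial decomposition of blue shrubberies for $\uw' \cdot t$. Each such shrubbery either ends in an isolated blue dot at the last position (corresponding to the $\un(1)$ quotient of $B_t$) or has its terminal blue strand absorbed into a larger blue shrub surrounding part of $\uw'$ (corresponding to the $\un(-1)$ subobject). Ensuring that the filtration actually splits as a direct sum in $\Hom_{\Da}$ and that the resulting isomorphism is $\Ra$-linear and compatible with this combinatorial splitting is the delicate point; it should follow from the fact that the connecting map is multiplication by $\alpha_t \in R_+$, which is killed in the antispherical quotient.
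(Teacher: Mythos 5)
Your spanning argument (Step~1) is correct and is essentially the observation underlying the paper's key claim: writing a non-blue shrubbery as $L=L_1\otimes L_2\otimes L_3$ with $L_2$ its leftmost red shrub exhibits a factorization through the Bott--Samelson object indexed by $\uw_2\uw_3$, a word starting with $s$ and hence an $I$-word, so $L$ vanishes in $\Da$.

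The genuine gap is in Step~2. You invoke a filtration of $\Hom_{\Da}(B_{\uw'},B_t)$ by two shifted copies of $\Hom_{\Da}(B_{\uw'},\un)$ coming from the short exact sequence $\un(-1)\hookrightarrow B_t\twoheadrightarrow\un(1)$. That sequence is a statement about bimodules; the diagrammatic category $\D$ is not abelian, and over the arbitrary coefficient rings and realizations the lemma is asserted for, it need not admit a faithful bimodule realization along which the sequence (or an exactness property of the resulting $\Hom$ functors) could be transported. What one actually has in $\D$ are the two dot morphisms $\un(-1)\to B_t$ and $B_t\to\un(1)$, whose composite is multiplication by $\alpha_t$; these do not by themselves produce a short exact sequence of $\Hom_{\Da}$-modules, and establishing such exactness in $\Da$ over $\Ra$ is essentially as hard as the lemma itself, which makes the inductive step circular. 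Your remark that the connecting map is $\alpha_t\in R_+$ and dies in the quotient bears on whether a filtration, once shown to exist, would split --- not on its existence.

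The paper sidesteps this entirely by never leaving $\D$, where the shrubbery (light leaves) basis over $R$ is an already-established theorem. It then characterizes the kernel of $\Hom_{\D}(B_{\uw},\un)\to\Hom_{\Da}(B_{\uw},\un)$ directly: it consists exactly of those $R$-linear combinations in which every blue shrubbery has coefficient in $(\alpha_s)$, the coefficients of non-blue shrubberies being unconstrained. The mechanism is to peel a single $s$-dot off any morphism factoring through an $I$-word, rewriting it as $\phi_1\circ(\text{dot}\otimes\id_{B_{\uw}})$ with $\phi_1\in\Hom_{\D}(B_{s\uw},\un)$; expanding $\phi_1$ in shrubberies (all of which begin with a red shrub, because of the leading $s$) and composing with the dot shows each term is either an $\alpha_s$-barbell times a shrubbery, or still contains a red shrub. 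Linear independence over $\Ra=\kk$ then reads off immediately from this description, with no recursion and no exactness claims. If you want to repair your proof, the realistic fix is to abandon the filtration and instead adapt this kernel computation.
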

			\begin{proof}
			 	Take $I=\{s\}$. It is sufficient to prove that the morphisms factoring through 
			 	$I$-words are precisely those which can be written in the form
			 	\[
			 		\alpha_s \sum_{K\in \Bshr} f_K K + \sum_{K\notin \Bshr} f_K K.
			 	\]
				In fact then it is easy to see that blue shrubberies span (just consider surjectivity of the map
				${}^I\!a$) and if a $\Ra$-linear combination
				\[
					\sum_{K\in \Bshr} f'_K K
				\]
			 	is zero in $\Gammaa$, then, in $\Gamma$, it can be written in the above form. But 
			 	this implies $f_K'=\alpha_s f_K$, so it is zero in $\Ra$. 
		
				Let us then prove the claim. Take a morphism $\phi$ in $\Hom(B_{\uw},\un)$ factoring 
				through $B_{\ux}$ with $\ux$ a word starting with $s$.
				Then, using relation \eqref{dotline}, we can factor it as
				\[
					\phi =
					\begin{tikzpicture}[baseline=1cm]
						\def\hh{.5cm}
						\draw (.5*\d,0)--(4.5*\d,0);
						\foreach \i in {1,2,4}{%
							\draw[violet] (\i*\d,0)--(\i*\d,\hh);
						}
						\node at (3*\d,.5*\hh) {\dots};
						\draw (.75*\d,\hh)--(4.25*\d,\hh)--(4.25*\d,2*\hh)--(.75*\d,2*\hh)--cycle;
						\node[font=\large] at (2.5*\d,1.3*\hh) {*};
						\draw[red] (\d,2*\hh)--(\d,3*\hh);
						\foreach \i in {2,4}{%
							\draw[violet,yshift=2*\hh] (\i*\d,0)--(\i*\d,\hh);
						}
						\node at (3*\d,2.5*\hh) {\dots};
						\draw[yshift=2*\hh] (.75*\d,\hh)--(4.25*\d,\hh)--(4.25*\d,2*\hh)--(.75*\d,2*\hh)--cycle;
						\node[font=\large] at (2.5*\d,3.3*\hh) {*};
						\draw (.5*\d,5*\hh)--(4.5*\d,5*\hh);
					\end{tikzpicture}
					=
					\begin{tikzpicture}[baseline=1cm]
						\def\hh{.5cm}
						\draw (0,0)--(4.5*\d,0);
						\foreach \i in {1,2,4}{%
							\draw[violet] (\i*\d,0)--(\i*\d,\hh);
						}
						\node at (3*\d,.5*\hh) {\dots};
						\draw (.75*\d,\hh)--(4.25*\d,\hh)--(4.25*\d,2*\hh)--(.75*\d,2*\hh)--cycle;
						\node[font=\large] at (2.5*\d,1.3*\hh) {*};
						\draw[red] (\d,2*\hh)--(\d,3*\hh);
						\foreach \i in {2,4}{%
							\draw[violet,yshift=2*\hh] (\i*\d,0)--(\i*\d,\hh);
						}
						\node at (3*\d,2.5*\hh) {\dots};
						\draw[yshift=2*\hh] (.75*\d,\hh)--(4.25*\d,\hh)--(4.25*\d,2*\hh)--(.75*\d,2*\hh)--cycle;
						\node[font=\large] at (2.5*\d,3.3*\hh) {*};
						\draw (0,5*\hh)--(4.5*\d,5*\hh);
						\draw[red] (\d,2.5*\hh) ..controls (.45*\d,2.5*\hh).. (.5*\d,.4*\hh); \fill[red] (.5*\d,.4*\hh) circle (1.5pt);
					\end{tikzpicture}
					=  \phi_1 \circ \Big(\begin{tikzpicture}[baseline=.4*\d]
										\draw (0,0)--(\d,0); \draw(0,\d)--(\d,\d); \draw[red] (.5*\d,.5*\d)--(.5*\d,\d);\fill[red] (.5*\d,.5*\d) circle (1.5pt);
									\end{tikzpicture}\otimes \id_{B_{\uw}}\Big)
				\]
				Where $\phi_1\in \Hom(B_{s\uw},\un)$. Write $\phi_1$ as a linear combination of shrubberies
				\[
					\phi_1=\sum_{K} f_K K
				\] 
				and notice that by the form of the starting word, all $K$'s must start with a red shrub. 
				Consider the compositions
				\[
					K\circ  \Big(\begin{tikzpicture}[baseline=.4*\d]
										\draw (0,0)--(\d,0); \draw(0,\d)--(\d,\d); \draw[red] (.5*\d,.5*\d)--(.5*\d,\d);\fill[red] (.5*\d,.5*\d) circle (1.5pt);
									\end{tikzpicture}\otimes \id_{B_{\uw}}\Big).
				\]
				If the first shrub of $K$ is a red dot then we get a factor $\alpha_s$. Otherwise the above composition will give a shrubbery wich still has at least one red shrub, which concludes the proof of the claim.
			\end{proof}
			\subsection{Categories of complexes}
			The functor \eqref{eq_functanti} induces a functor on the level of homotopy categories, 
			as well as on the level of dg categories of complexes, that will still be denoted by ${}^I\!(-)$.
%			\begin{rmk}
%				The functor $\Kb(\D)\rightarrow \Kb(\Da)$ need not be full in general\footnote{The issue is that
%				a morphism of complexes could be well defined only modulo morphisms factoring through $I$-words}.
%				Nevertheless the functor $\Cdg(\D)\rightarrow\Cdg(\Da)$ is full and, on the level of 
%				morphisms, its action can be seen as the quotient by the dg submodule of morphisms factoring 
%				through $I$-words.		
%			\end{rmk}
			The above right action induces a right (dg) action of $\Kb(\D)$, and $\Cdg(\D)$, on 
			$\Kb(\Da)$, and $\Cdg(\Da)$, respectively.
			For a braid word $\uom$ and a letter $\sigma\in \Sigma$, we have 
			${}^I\!F_{\uom}\cdot F_{\sigma}={}^I\!F_{\uom\sigma}$. Hence 
			$-\cdot F_{\sigma}$ and $-\cdot F_{\sigma^{-1}}$ define mutually inverse self-equivalences
			of $\Kb(\Da)$.
			\subsection{Antispherical Wakimoto sheaves}\label{subs_antisphwaki}
			We want to describe morphism spaces between the images ${}^I\!\Wakr{k}$
			of the Wakimoto sheaves (the homotopy equivalence $\Wak{k}^\bullet\simeq \Wakr{k}$ carries over to
			the anti-spherical category), and we see them as the cohomology groups of the dg morphism spaces
			\begin{equation}\label{eq_morph_antiW}
				\Homb({}^I\!\Wakr{k_1},{}^I\!\Wakr{k_2}).
			\end{equation}
			Using the right action, one can reduce \eqref{eq_morph_antiW} to the case $k_2=0$ as before.
	
			Furthermore, by (the anti-spherical version of) the Rouquier formula (see Proposition 
			\ref{pro_rouf}) 
			this gives zero when $k_1$ is positive. 
			So we are again reduced to study
			\[
				\Casph:=\Homb({}^I\!\Wakr{-n},\un)
			\]
			for $n>0$. We will denote ${}^I\!d$ the differential map of $\Casph$.
			As a graded module we can still write it as
			\[
				\Casph=\bigoplus_{\ux\preceq \us_{2n}}\Hom_{\Da}({}^I\!B_{\ux},\un)/\Lambda_{\ux}.
			\]
			Notice that the terms for $\ux$ an $I$-word are zero. 
			As before, we can consider 
			\[
				\Gammaa=\bigoplus_{\ux}\Hom_{\Da}({}^I\!B_{\ux},\un)/\Lambda_{\ux}.
			\]
			The functor \eqref{eq_functanti} induces a surjective morphism of dg-modules
			${}^I\!a:\Gamma\rightarrow\Gammaa$, which 
			endows $\Gammaa$ with a structure of dg-algebra: the product is 
			${}^I\!a(\gamma_1){}^I\!a(\gamma_2):={}^I\!a(\gamma_1\gamma_2)$ and it is well defined 
			because if $\delta_1$ and $\delta_2$ are morphisms in $\Gamma$ factoring through $I$-words, 
			then
			\[
				{}^I\!a\big((\gamma_1+\delta_1)(\gamma_2+\delta_2)\big)={}^I\!a(\gamma_1\gamma_2+\gamma_1\delta_2+\delta_1\gamma_2+\delta_1\delta_2)={}^I\!a(\gamma_1\gamma_2).
			\]
			In fact the three morphisms $\gamma_1\delta_2$, $\delta_1\gamma_2$ and $\delta_1\delta_2$ all 
			factor through $I$-words, their target being the unit.
	
			By Lemma \ref{lem_basis}, a basis of $\Casph$ (or $\Gammaa$) is given by the 
			collection of blue (or red) shrubberies without empty arches, whose starting word is a 
			subword of $\us_{2n}$ (respectively, any Coxeter word).
	
			The reduction of last section is compatible with the passage to the antispherical category. 
			In particular, consider the quotient ${}^I\!\tilde{\Gamma}$ of $\tilde{\Gamma}$ 
			obtained by imposing $\rho_k=0$ for all $k>0$. One can define ${}^I\!\tilde{C}_n$ in a
			similar way.
			\begin{pro}
				The complex $\Gammaa$ is homotopy equivalent to ${}^I\!\tilde{\Gamma}$ and 
				$\Casph$ is homotopy equivalent to ${}^I\!\tilde{C}_n$, in such a way that 
				the following diagrams commute:
				\begin{align*}
					&\begin{tikzcd}[ampersand replacement=\&]
						\Gamma \ar[d,two heads] \ar[r,"\sim"]	\& \tilde{\Gamma} \ar[d,two heads] \\
						{}^I\!\Gamma \ar[r,"\sim"]				\& {}^I\!\tilde{\Gamma}
					\end{tikzcd}&
					&\begin{tikzcd}[ampersand replacement=\&]
						C_n \ar[d,two heads] \ar[r,"\sim"]	\& \tilde{C}_n[-2n] \ar[d,two heads] \\
						{}^I\!C_n \ar[r,"\sim"]				\& {}^I\!\tilde{C}_n[-2n]
					\end{tikzcd}
				\end{align*}			
			\end{pro}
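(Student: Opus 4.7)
The plan is to mimic the Gaussian elimination of Theorem~\ref{thm_antisph} directly inside the antispherical category, and then to read off the commutativity of the squares. By Lemma~\ref{lem_basis}, both $\Gammaa$ and $\Casph$ admit bases indexed by blue shrubberies (without empty arches, and, in the case of $\Casph$, with starting word a subword of $\us_{2n}$). Accordingly, I would restrict to the blue part the decomposition $A\oplus E\oplus F$ from the proof of Theorem~\ref{thm_antisph}, writing $\Gammaa=A^{b}\oplus E^{b}\oplus F^{b}$.

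The central observation is that the bijection $u\colon \mathcal{E}\to\mathcal{F}$ of Remark~\ref{rmk_divis} preserves the outer color of every shrub: inspecting its recursive definition shows that uprooting only merges two adjacent sub-shrubberies inside their common enclosing arch. Consequently, the differential $d$ of $\Gamma$ sends a blue shrubbery to a linear combination of blue shrubberies, and a non-blue shrubbery to a combination of non-blue ones. In particular, the comparison map $\phi^{p}\colon E^{p}\to F^{p+1}$ appearing in the proof of Theorem~\ref{thm_antisph} is block-diagonal with respect to the blue/non-blue splitting; its blue block is again upper-triangular with diagonal entries $\pm 1$ by Lemma~\ref{lem_order}, hence invertible, with inverse also block-diagonal. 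Gaussian elimination therefore goes through unchanged inside $\Gammaa$ (and $\Casph$), exhibiting it as homotopy equivalent to the span $A^{b}$ of well-tended blue shrubberies endowed with the induced differential $\tilde{d}$.

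To identify $A^{b}$ with ${}^I\!\tilde{\Gamma}$ (and the corresponding subspace in $\Casph$ with ${}^I\!\tilde{C}_{n}$), note that well-tended blue shrubberies are exactly the monomials in the $\beta_{k}$'s, which are the generators of ${}^I\!\tilde{\Gamma}=\tilde{\Gamma}/(\tilde{\rho}_{k}\mid k>0)$. The formula for $\tilde{d}(\beta_{k})$ established in the proof of Theorem~\ref{thm_antisph},
\[
	\tilde{d}(\beta_{k})=\sum_{i=1}^{k-1}\qbc{k}{i}{t}\beta_{k-i}\beta_{i}-\sum_{i=1}^{k-1}\qbc{k-1}{i-1}{s}(\beta_{i}\rho_{k-i}+\rho_{k-i}\beta_{i}),
\]
has its last two sums consisting entirely of non-blue shrubberies, which vanish after passing to $\Gammaa$. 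What remains matches the formula for $d(\tilde{\beta}_{k})$ in ${}^I\!\tilde{\Gamma}$; the equations for $d(\tilde{\rho}_{k})$ are tautological once the $\tilde{\rho}_{k}$'s are killed.

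Finally, the commutativity of the two squares is a consequence of the block-diagonality of the elimination: restricting the homotopy equivalence $\Gamma\simeq\tilde{\Gamma}$ (resp.\ $C_{n}\simeq\tilde{C}_{n}[-2n]$) to the blue subspace yields the homotopy equivalence on the bottom row, and both horizontal arrows intertwine the vertical projections by construction. I expect the most delicate point to be verifying \emph{strict} commutativity of the squares rather than mere commutativity up to homotopy: this reduces to the observation that the contracting homotopy $(\phi^{p})^{-1}$ respects the blue/non-blue splitting, which in turn follows from the color-preservation of $u$ and from the fact that the order of Definition~\ref{def_order} is lexicographic with respect to the monoidal structure.
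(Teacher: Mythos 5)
Your central claim — that the differential $d$ of $\Gamma$ sends a blue shrubbery to a linear combination of blue shrubberies — is false, and this invalidates the asserted block-diagonality of $\phi^p$. The ``Consequently'' does not follow: $u$ only uproots the leftmost outer stem (a $D0$), and indeed this preserves the outer color of each shrub, but the differential $d$ uproots \emph{every} bottom-boundary strand, including the outer $U1$ and $D1$ of a shrub. Uprooting those detaches the innermost first or last sub-shrubbery, which has the opposite outer color. Concretely, applying $d$ to the blue shrub $\bopen\rdot\bcomma\rdot\bclosed$ produces, among other terms, $\rdot\bopen\rdot\bclosed$ and $\bopen\rdot\bclosed\rdot$ (from uprooting the first and last $t$-strands), which contain a red shrub and are therefore not blue. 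So $\phi^p$ is not block-diagonal with respect to the blue/non-blue decomposition.

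The good news is that the argument needs neither block-diagonality nor block-triangularity. Since the quotient $\pi\colon\Gamma\to\Gammaa$ is a morphism of dg-modules, it automatically intertwines each graded component of the differential, in particular $\pi_F\circ\phi^p=\phi_b^p\circ\pi_E$, where $\phi_b^p\colon (E^b)^p\to (F^b)^{p+1}$ is the induced map on the blue spans. Your observation that $u$ preserves outer colors does show, combined with Lemma~\ref{lem_order}, that $\phi_b^p$ is upper-triangular with $\pm 1$ on the diagonal with respect to the restricted bases, hence invertible over $\kk$. Once both $\phi^p$ (over $R$) and $\phi_b^p$ (over $\kk$) are known to be invertible, the relation $\pi_E\circ(\phi^p)^{-1}=(\phi_b^p)^{-1}\circ\pi_F$ follows formally by pre- and post-composing both sides of the intertwining identity with the two inverses; consequently $\pi_A$ intertwines the twisted differentials $\tilde{d}=a-g\phi^{-1}f$ and $\tilde{d}_b=a^b-g^b\phi_b^{-1}f^b$, and likewise the inclusion and projection realizing the homotopy equivalence, giving the strict commutativity of both squares. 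Your identification of the blue well-tended subspace with ${}^I\!\tilde{\Gamma}$, via the formula for $\tilde{d}(\beta_k)$ and the vanishing of the terms involving $\rho_{k-i}$ ($k-i\ge 1$), is correct; so the rest of the proof stands once the block-diagonality step is replaced by the formal intertwining argument above.
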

			Notice that, by the form of the differential, the complex 
			${}^I\!\tilde{C}_n$ splits into smaller complexes:
			\begin{equation}\label{eq_decompositiontildeC}
				{}^I\!\tilde{C}_n=\bigoplus_{m\le n} B_m,			
			\end{equation}
			where $B_m$ is the span of the monomials $\tibe_{k_1}\tibe_{k_2}\dots\tibe_{k_r}$ with
			$k_1+k_2+\dots+k_r=m$.
		\begin{exa}\label{exa_splitC4}
			The complex ${}^I\! \tilde{C}_4$ decomposes into four pieces. The first is:
			\[
				\begin{tikzcd}[row sep=tiny,column sep=large]
																					& \Ra\cdot \tibe_1\tibe_3 \ar[r,"-c^3_1"]\ar[ddr,"-c^3_2", near start]			& \Ra\cdot \tibe_1\tibe_1\tibe_2 \ar[ddr,"c^2_1"]										&		\\
																					& \oplus											& \oplus														&		\\
					\Ra\cdot \tibe_{4} \ar[uur,"c^4_1"] \ar[r,"c^4_2"] \ar[ddr,swap,"c^4_3"]	& \Ra\cdot \tibe_2\tibe_2 \ar[uur,crossing over,"c^2_1", near start]				& \Ra\cdot \tibe_1\tibe_2\tibe_1 \ar[r,"-c^2_1"]											& \Ra\cdot \tibe_1\tibe_1\tibe_1\tibe_1		\\
																					& \oplus											& \oplus														&		\\
																					& \Ra\cdot \tibe_3\tibe_1 \ar[uur,"c^3_1", near start] \ar[r,swap,"c^3_2"]			& \Ra\cdot \tibe_2\tibe_1\tibe_1 \ar[from=uul,crossing over,"-c^2_1", near start]\ar[uur,swap,"c^2_1"]	&		
				\end{tikzcd}
			\]
			where $c^i_j$ is a shortcut for $\qbc{i}{j}{t}$. The other three pieces are:
			\[
				\begin{tikzcd}[row sep=tiny,column sep=small]
																		& \Ra\cdot \tibe_1\tibe_2 \ar[rd,"c^2_1"]		&	\\
					\Ra\cdot \tibe_3 \ar[ru,"c^3_1"]\ar[rd,swap,"c^3_2"]& \oplus										& \Ra \cdot \tibe_1\tibe_1\tibe_1 \\
																		& \Ra\cdot \tibe_2\tibe_1 \ar[ru,swap,"-c^2_1"]	&
				\end{tikzcd},
				\,
				\begin{tikzcd}[column sep=small]
					\Ra \cdot \tibe_2 \ar[r,"c^2_1"]	& \Ra\cdot \tibe_1\tibe_1
				\end{tikzcd},\,
				\Ra\cdot \tibe_1,\,
				\Ra\cdot \tibe_0
			\]
		\end{exa}		
		We want to compute the cohomology of this dg module. First we need some more properties of 
		two-color quantum binomial coefficients.
		\subsection{Even more on two-color quantum numbers}
		The two-color quantum numbers, as the standard ones, can be factorized in $\ZZ[x,y]$ into
		\emph{two color cyclotomic polynomials}.
		\begin{pro}
			Consider the decomposition of the polynomials $\qn{n}{x}$ and $\qn{n}{y}$ into irreducible
			factors. Then we have:
			\begin{enumerate}
				\item each $\qn{n}{x}$ has a factor $\phi_{n,x}\in \ZZ[x,y]$ not appearing in the decomposition of 
					$\phi_{k,x}$ for $k<n$, and similarly for $y$;
				\item one has
					\[
						\qn{n}{x}=\prod_{d\mid n} \phi_{d,x}
					\]
					and similarly for $y$;
				\item\label{item_great2} for $n>2$, we have $\phi_{n,x}=\phi_{n,y}$.
			\end{enumerate}
		\end{pro}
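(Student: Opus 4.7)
The plan is strong induction on $n$. The base cases $n=1,2$ are handled by the direct assignments $\phi_{1,x}=\phi_{1,y}=1$, $\phi_{2,x}=x$, $\phi_{2,y}=y$; all three claims are then immediate (with (iii) vacuous). For $n\ge 3$, I would first prove, as an auxiliary claim, that $\qn{d}{x}$ divides $\qn{n}{x}$ in $\ZZ[x,y]$ whenever $d\mid n$. This is shown by induction on $n/d$ using a two-color version of the classical addition formula
\[
    \qn{a+b}{x}=\qn{a}{x}\qn{b+1}{?}-\qn{a-1}{x}\qn{b}{?},
\]
where the colors on the right are determined by the parities of $a$ and $b$, itself derived by a short induction from the defining recursion of the two-color quantum numbers. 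Taking $a=n-d$ and $b=d$ exhibits $\qn{n}{x}$ as a combination of two terms, both visibly divisible by $\qn{d}{?}$ (one from the explicit factor, the other via the inductive hypothesis on $\qn{n-d}{x}$).

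Setting $\psi_n:=\prod_{d\mid n,\,d<n}\phi_{d,x}$, the inductive hypothesis (i) guarantees that the factors $\phi_{d,x}$ are pairwise coprime in the UFD $\ZZ[x,y]$, so the divisibility above, combined with $\phi_{d,x}\mid \qn{d}{x}$, yields $\psi_n\mid \qn{n}{x}$. Define $\phi_{n,x}:=\qn{n}{x}/\psi_n\in\ZZ[x,y]$; this gives (ii) at $n$. For (iii), the swap $x\leftrightarrow y$ fixes $\phi_{d,x}$ for $2<d<n$ by the inductive hypothesis and fixes $\phi_{1,x}=1$, while exchanging $\phi_{2,x}$ with $\phi_{2,y}$. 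For $n$ odd, the swap fixes $\psi_n$ (since $2\nmid n$) and fixes $\qn{n}{x}=\qn{n}{y}$, so $\phi_{n,x}=\phi_{n,y}$. For $n$ even, a short parallel induction using the recursion shows $\qn{n}{x}=x\cdot g_n(xy)$ for some $g_n\in\ZZ[xy]$ (the key input being that $\qn{m}{x}\in\ZZ[xy]$ for $m$ odd); after cancelling $\phi_{2,x}=x$, the remaining quotient of $g_n(xy)$ by $\prod_{2<d\mid n,\,d<n}\phi_{d,x}$ is manifestly symmetric, giving $\phi_{n,x}=\phi_{n,y}$.

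The main obstacle is part (i). For this I would use the specialization $\sigma:\ZZ[x,y]\to\ZZ[v,v^{-1}]$ of Remark \ref{rmk_specquantumnumb} sending both $x$ and $y$ to $v+v^{-1}$, under which $\qn{n}{x}$ maps to the standard quantum number $\qn{n}{v}=v^{1-n}\prod_{e\mid 2n,\,e>2}\Phi_e(v)$. A divisor-by-divisor bookkeeping using the inductive hypothesis shows that $\sigma(\phi_{d,x})$ equals, up to a unit in $\ZZ[v,v^{-1}]$, the polynomial $\Phi_{2d}(v)$ when $d$ is even and $\Phi_d(v)\Phi_{2d}(v)$ when $d>1$ is odd; these sets of cyclotomic indices are pairwise disjoint as $d$ varies. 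A hypothetical common irreducible factor $p\in\ZZ[x,y]$ of $\phi_{n,x}$ and some $\phi_{k,x}$ with $k<n$ would then force $\sigma(p)=\pm 1$. But this is impossible: $p$ must have positive total degree (no prime integer divides $\qn{n}{x}$, whose leading monomial has coefficient $\pm 1$), and the inequality $\deg_v\sigma(f)\le\deg_{x,y}f$ combined with the equality $\deg_v\qn{n}{v}=\deg_{x,y}\qn{n}{x}=n-1$ forces every positive-degree irreducible factor of $\qn{n}{x}$ to specialize to a non-unit. This contradiction establishes (i) and completes the induction.
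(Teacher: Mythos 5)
Your argument is correct in substance but follows a genuinely different route from the paper's. The paper defines $\phi_n$ (for $n\ge3$) intrinsically as $p_n(xy)$, where $p_n\in\ZZ[t]$ is the minimal polynomial of $4\cos^2(\pi/n)$; divisibility $\phi_n\mid\qn{n}{x}$ comes at once from observing that $[n]_v$ vanishes at $v=e^{\pi i/n}$, and the remaining properties are transported from ordinary cyclotomics through the specialization $x,y\mapsto v+v^{-1}$. You instead \emph{construct} $\phi_{n,x}$ recursively as the quotient $\qn{n}{x}/\prod_{d\mid n,\,d<n}\phi_{d,x}$, which forces you to first prove the auxiliary divisibility $\qn{d}{x}\mid\qn{n}{x}$ for $d\mid n$ (via your two-color addition formula) and pairwise coprimality of the lower $\phi_{d,x}$'s; part (iii) then falls out of the $\ZZ[xy]$-structure of the quantum numbers. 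Your specialization bookkeeping for (i) is correct, though the degree argument is cleaner if phrased via the \emph{span} (top minus bottom $v$-degree): $\mathrm{span}\,\sigma(f)\le 2\deg_{x,y}f$, span is additive on products of Laurent polynomials, and $\mathrm{span}([n]_v)=2(n-1)=2\deg_{x,y}\qn{n}{x}$, so equality propagates to every irreducible factor. The one substantive point left open is \emph{irreducibility} of $\phi_{n,x}$, which the paper's construction supplies for free. Your span count actually settles this outright when $n$ is even, since $\sigma(\phi_{n,x})$ is then a unit times the single irreducible $\Phi_{2n}$; but for odd $n>1$ the specialization is the product $\Phi_n\Phi_{2n}$ of two distinct cyclotomics of equal degree, so an extra step is needed (for instance: $\phi_{n,x}\in\ZZ[xy]$, and a Newton-polytope argument shows $p(xy)$ is irreducible in $\ZZ[x,y]$ whenever $p\in\ZZ[t]$ is irreducible with $p(0)\neq0$). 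The phrase ``not appearing in the decomposition'' and the later Lemmas~\ref{lem_factorsbinomial}--\ref{lem_gcdcyclo} treat $\phi_d$ as a single prime, so this gap should be closed.
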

		\begin{proof}
			If $n$ is odd, then $\qn{n}{x}=\qn{n}{y}$ is a polynomial in $xy$. If $n$ is even, then
			$\qn{n}{x}/x$ is a polynomial in $xy$, and similarly for $y$. For $n\ge 3$ let $p_n\in \ZZ[t]$
			be the minimal polynomial of $4\cos^2(\pi/n)$. Then let $\phi_n:=p_n(xy)$. Now notice that 
			the $n$-th symmetric quantum numbers in $\Laur$ vanishes at $e^{\frac{\pi i}{n}}$.
			Using the specialization from Remark \ref{rmk_specquantumnumb}, we deduce that $\qn{n}{x}$ and 
			$\qn{n}{y}$ are divisible by $\phi_n$. One then proves that under this specialization, the $\phi_d$ give
			(the symmetric version of) the usual cyclotomic polynomials. The other properties are 
			proved as those of the standard cyclotomic polynomials.
		\end{proof}
		\begin{dfn}
			Let $n\ge 2$. The polynomials $\phi_{n,x}$ and $\phi_{n,y}$ from the proposition are called
			\emph{two-color cyclotomic polynomials}. For $n>2$ we will simply write $\phi_n$.
		\end{dfn}
		The following two results will be useful in the computation of the cohomology.
		\begin{lem}\label{lem_factorsbinomial}
			Let $d,n\ge 2$ with $d\mid n$. Then $\phi_d\mid \qbc{n}{k}{y}$ if and only if 
			$d\nmid k$.
		\end{lem}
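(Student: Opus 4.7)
The plan is to reduce divisibility by $\phi_d$ inside $\qbc{n}{k}{y}$ to a multiplicity count via a Legendre-style formula, exploiting the cyclotomic factorization of two-colored quantum numbers from the preceding proposition. All computations are carried out in the UFD $\ZZ[x,y]$.

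First I would fix an irreducible factor $p$ of $\phi_{d}$ (meaning $\phi_{d,y}$, which by item (\textit{iii}) equals $\phi_{d,x} = \phi_d$ for $d > 2$, and is just $y$ for $d = 2$) and compute the $p$-adic valuation $v_p([m]_y)$. By item (\textit{ii}), $\qn{m}{y} = \prod_{e \mid m}\phi_{e,y}$, and by item (\textit{i}) distinct $\phi_{e,y}$'s are pairwise coprime; consequently $v_p([m]_y) = v_p(\phi_{d,y}) \cdot \mathbf{1}[d \mid m]$. Granting squarefreeness of $\phi_d$, so that $v_p(\phi_{d,y}) = 1$ for each irreducible $p \mid \phi_{d,y}$, summing over $m$ from $1$ to $n$ gives $v_p([n]!_y) = \lfloor n/d \rfloor$.

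Next I would plug this into the defining identity $[n]!_y = \qbc{n}{k}{y}\cdot [k]!_y \cdot [n-k]!_y$ to obtain
\[
v_p\!\left(\qbc{n}{k}{y}\right) = \lfloor n/d \rfloor - \lfloor k/d \rfloor - \lfloor (n-k)/d \rfloor.
\]
Using $d \mid n$ and writing $k = qd + r$ with $0 \le r < d$, a short case analysis gives $v_p(\qbc{n}{k}{y}) = 0$ when $d \mid k$ and $v_p(\qbc{n}{k}{y}) = 1$ when $d \nmid k$. Since this value does not depend on the chosen irreducible factor $p$ of $\phi_d$, squarefreeness of $\phi_d$ translates divisibility by $\phi_d$ into simultaneous divisibility by each of its irreducible factors, yielding $\phi_d \mid \qbc{n}{k}{y}$ precisely when $d \nmid k$.

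The main (and essentially only) obstacle is justifying the squarefreeness of $\phi_d$ in $\ZZ[x,y]$. I would approach this via the specialization $x, y \mapsto v + v^{-1}$ from Remark \ref{rmk_specquantumnumb}, under which $\phi_d$ maps to the classical cyclotomic polynomial $\Phi_d \in \ZZ[v, v^{-1}]$, which is irreducible and squarefree. A repeated irreducible factor of $\phi_d$ in $\ZZ[x,y]$ would specialize either to a repeated factor of $\Phi_d$ (impossible by irreducibility) or to a unit (which one excludes using that the specialization map has small kernel, essentially generated by $x - y$, whose image of course vanishes rather than being a unit). The case $d = 2$ is immediate since $\phi_{2,y} = y$ is already irreducible.
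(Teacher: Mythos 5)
Your main argument is correct and is precisely what the paper means by ``argue just as in the standard case'': fix an irreducible $p$ dividing $\phi_d$, use the factorization $\qn{m}{y}=\prod_{e\mid m}\phi_{e,y}$ together with pairwise coprimality of the $\phi_{e,y}$ to see that $\phi_d$ is the only source of $p$ among the $\qn{m}{y}$, then apply the Legendre-type count $v_p(\qbc{n}{k}{y})=v_p(\phi_d)\bigl(\lfloor n/d\rfloor-\lfloor k/d\rfloor-\lfloor (n-k)/d\rfloor\bigr)$ and the observation that, when $d\mid n$, the bracket is $0$ or $1$ according to whether $d\mid k$.

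Two remarks on the squarefreeness detour. First, it is unnecessary: since every $v_p(\qbc{n}{k}{y})$ is a common multiple $v_p(\phi_d)\cdot\varepsilon$ with $\varepsilon\in\{0,1\}$ independent of $p$, and $\phi_d\mid X$ is equivalent to $v_p(X)\ge v_p(\phi_d)$ for all irreducibles $p$, the multiplicities $v_p(\phi_d)$ cancel and the conclusion $\phi_d\mid\qbc{n}{k}{y}\Leftrightarrow d\nmid k$ follows without any squarefreeness hypothesis. Second, the squarefreeness argument you sketch is not airtight as stated: the specialization $x,y\mapsto v+v^{-1}$ sends $\phi_d$ to the \emph{symmetric} version of the cyclotomic polynomial (as the paper's proof of the preceding proposition is careful to say), not to $\Phi_d$ itself, and the appeal to the kernel $(x-y)$ does not by itself rule out an irreducible factor of $\phi_d$ specializing to a unit. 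If you want the stronger statement, the cleaner route is to observe that for $d\ge 3$ the paper sets $\phi_d=p_d(xy)$ with $p_d$ the (irreducible) minimal polynomial of $4\cos^2(\pi/d)$; viewing $p_d(xy)$ in $\mathbb{Q}(y)[x]$ and substituting $x=z/y$ shows it is irreducible there, and a Gauss-lemma argument descends this to $\ZZ[x,y]$, so $\phi_d$ is in fact irreducible (and $\phi_{2,y}=y$ is visibly irreducible). But again, none of this is needed for the lemma.
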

		\begin{proof}
			One can argue just as in the standard case.
		\end{proof}
		In fact, as in the standard case, two-color cyclotomic polynomials show the nice
		factorization pattern in the \emph{two-color quantum Pascal triangle} (i.e.\ the triangle of two-color quantum binomial coefficients), shown in Figure \ref{fig_pascal}.
		\begin{figure}
			\begin{tikzpicture}[x=.75cm,y=.7cm]
				\foreach \i in {0,1,2,3,4,5,6,7}{
					\node at (\i,-\i) {1};
					}
				\foreach \i in {1,2,3,4,5,6,7}{
					\node at (-\i,-\i) {1};
					}
				\foreach \i in {2,4,6,8}{
					\node[xscale=-1] at (-\i,-8) {$\ddots$};
					}
				\foreach \i in {2,4,6,8}{
					\node at (\i,-8) {$\ddots$};
					}
				\node at (0,-8) {$\vdots$};
				\node at (0,-2) {$\textcolor{green}{\phi_2}$};
				\node at (-1,-3) {$\textcolor{red}{\phi_3}$};\node at (1,-3) {$\textcolor{red}{\phi_3}$};
				\node at (-2,-4) {$\textcolor{green}{\phi_2}\textcolor{blue}{\phi_4}$};\node at (0,-4) {$\textcolor{red}{\phi_3}\textcolor{blue}{\phi_4}$};\node at (2,-4) {$\textcolor{green}{\phi_2}\textcolor{blue}{\phi_4}$};
				\node at (-3,-5) {$\textcolor{orange}{\phi_5}$};\node at (-1,-5) {$\textcolor{blue}{\phi_4}\textcolor{orange}{\phi_5}$};\node at (1,-5) {$\textcolor{blue}{\phi_4}\textcolor{orange}{\phi_5}$};\node at (3,-5) {$\textcolor{orange}{\phi_5}$};
				\node at (-4,-6) {$\textcolor{green}{\phi_2}\textcolor{red}{\phi_3}\textcolor{cyan}{\phi_6}$};\node at (-2,-6) {$\textcolor{red}{\phi_3}\textcolor{orange}{\phi_5}\textcolor{cyan}{\phi_6}$};\node at (0,-6) {$\textcolor{green}{\phi_2}\textcolor{blue}{\phi_4}\textcolor{orange}{\phi_5}\textcolor{cyan}{\phi_6}$};\node at (2,-6) {$\textcolor{red}{\phi_3}\textcolor{orange}{\phi_5}\textcolor{cyan}{\phi_6}$};\node at (4,-6) {$\textcolor{red}{\phi_3}\textcolor{green}{\phi_2}\textcolor{cyan}{\phi_6}$};
				\node at (-5,-7) {$\textcolor{violet}{\phi_7}$};\node at (-3,-7) {$\textcolor{red}{\phi_3}\textcolor{cyan}{\phi_6}\textcolor{violet}{\phi_7}$};\node at (-1,-7) {$\textcolor{orange}{\phi_5}\textcolor{cyan}{\phi_6}\textcolor{violet}{\phi_7}$};\node at (1,-7) {$\textcolor{orange}{\phi_5}\textcolor{cyan}{\phi_6}\textcolor{violet}{\phi_7}$};\node at (3,-7) {$\textcolor{red}{\phi_3}\textcolor{cyan}{\phi_6}\textcolor{violet}{\phi_7}$};\node at (5,-7) {$\textcolor{violet}{\phi_7}$};
			\end{tikzpicture}
			\caption{The two-color quantum Pascal triangle}\label{fig_pascal}
		\end{figure}
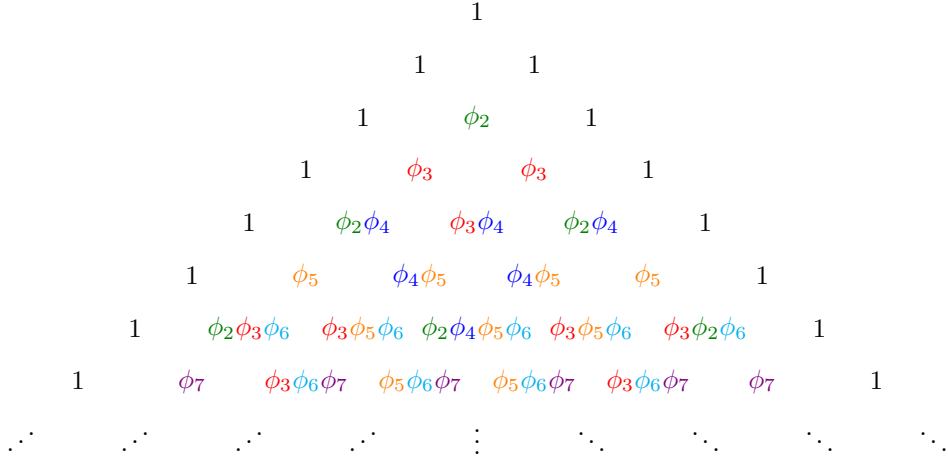
		\begin{lem}\label{lem_gcdcyclo}
			If $k<n$ are natural numbers with $k\nmid n$, then the ideal generated by $\phi_{k,x}$ and $\phi_{n,x}$ is the whole 
			ring $\ZZ[x,y]$:
			\[
				(\phi_{k,x},\phi_{n,x})=(1)
			\]
		\end{lem}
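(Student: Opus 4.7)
The plan is to reduce to a one-variable resultant computation in $\ZZ[t]$, treating the cases $k = 2$ and $k \geq 3$ separately.

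For $k \geq 3$, both $\phi_{k,x} = p_k(xy)$ and $\phi_{n,x} = p_n(xy)$ lie in the subring $\ZZ[t] \subseteq \ZZ[x,y]$ with $t = xy$, so it will suffice to show $(p_k(t), p_n(t)) = (1)$ in $\ZZ[t]$, equivalently that $\mathrm{Res}(p_k, p_n) = \pm 1$. The starting point will be the identity
\[
    z^{\varphi(n)/2}\, p_n(2 + z + z^{-1}) = \Phi_n(z),
\]
verified by matching roots: each pair $\{\zeta, \zeta^{-1}\}$ of primitive $n$-th roots of unity yields one root $\alpha = 2 + \zeta + \zeta^{-1}$ of $p_n$. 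Applying this with $z = \zeta$ and then $z = \zeta^{-1}$ gives $p_n(\alpha)^2 = \Phi_n(\zeta)\Phi_n(\zeta^{-1})$, and multiplying over the roots $\alpha$ of $p_k$ yields
\[
    \mathrm{Res}(p_k, p_n)^2 = \mathrm{Res}(\Phi_k, \Phi_n).
\]
By Apostol's classical formula, $\mathrm{Res}(\Phi_k, \Phi_n) = 1$ unless $n/k$ is an integer prime power. Under the hypothesis $k \nmid n$, $n/k$ is not even an integer, so the right-hand side equals $1$ and hence $\mathrm{Res}(p_k, p_n) = \pm 1$.

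For $k = 2$, $\phi_{2,x} = x$ and $(x, \phi_{n,x}) = \ZZ[x,y]$ if and only if $p_n(0)$ is a unit in $\ZZ$. The hypothesis $2 \nmid n$ forces $n$ to be odd and $\geq 3$. Expanding $p_n(0) = \prod_\alpha(-\alpha)$ over the roots of $p_n$ and using $2 + 2\cos\theta = (1 + e^{i\theta})(1 + e^{-i\theta})$ to pair conjugate primitive roots reduces, up to sign, to $\Phi_n(-1)$. For odd $n > 1$ the identity $\Phi_{2n}(x) = \Phi_n(-x)$ gives $\Phi_n(-1) = \Phi_{2n}(1) = 1$, the last step being the standard evaluation $\Phi_m(1) = 1$ when $m$ is not a prime power (here $m = 2n$ has both $2$ and an odd factor).

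The most delicate point will be the bookkeeping in the passage from the root-level identity to the resultant equality $\mathrm{Res}(p_k, p_n)^2 = \mathrm{Res}(\Phi_k, \Phi_n)$, in particular the pairing of representatives of $\{\zeta, \zeta^{-1}\}$ (for which one needs that $\varphi(n)$ is even for $n \geq 3$, making the power $z^{\varphi(n)/2}$ an honest Laurent monomial). The small cases $(k,n) = (3,5)$ giving $(-1)^2 = 1$ and the excluded case $(k,n) = (3,6)$ giving $(-2)^2 = 4 = 2^{\varphi(3)}$ provide useful sanity checks.
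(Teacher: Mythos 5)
Your proof is correct, and it is essentially the filled-in version of the paper's one-line appeal to ``the analogous property of usual cyclotomic polynomials, via the specialization map.'' The substitution $t = xy \mapsto 2+z+z^{-1}$ is that specialization (put $z=v^2$ in the map of Remark 2.2.1), and your identity $z^{\varphi(n)/2}p_n(2+z+z^{-1}) = \Phi_n(z)$, together with the resultant relation $\mathrm{Res}(p_k,p_n)^2 = \mathrm{Res}(\Phi_k,\Phi_n)$ and Apostol's formula, is exactly the mechanism needed to transport coprimality for $k\geq 3$ (with the evenness of $\varphi(k)\varphi(n)$ removing the sign in $\mathrm{Res}(\Phi_k,\Phi_n)$ so that the square is genuinely $1$). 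The separate case $k=2$ that you isolate is genuinely needed, since $\phi_{2,x}=x$ does not lie in $\ZZ[xy]$ and the one-variable reduction breaks down; your computation $p_n(0)=\pm\Phi_n(-1)=\pm\Phi_{2n}(1)=\pm 1$ for odd $n\geq 3$ handles it correctly.
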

		\begin{proof}
			Again one can deduce the result from the analogous property of usual cyclotomic polynomials,
			via the specialization map.
		\end{proof}
		\begin{rmk}
			With the standard Cartan matrix the cyclotomic polynomials give:
			\[
				\phi_{n,x}(2,2)=\phi_{n,y}(2,2)=e^{\Lambda(n)},
			\]
			where $\Lambda$ denotes the Von Mangoldt function
			\[
				\Lambda(n)=
					\begin{cases}
						\log(p) & \text{if $n=p^r$ with $p$ prime,}\\
						0		& \text{otherwise.}
					\end{cases}
			\]
		\end{rmk}			
		\subsection{Extension groups}\label{subs_extgrpsZ}
		We now use our reduction to compute the cohomology of the complex ${}^I\!\tilde{C}_n$.
		We will work over $\kk=\ZZ[x,y]$ so that one can specialize to any realization.
		For convenience, let $\qn{n}{}:=\qn{n}{y}$ and $\qbc{n}{k}{}=\qbc{n}{k}{y}$. Let also $\phi_2:=\phi_{2,y}$.
		
		We first describe the result. We call a partition 
		$\lambda=(\lambda_1\ge \lambda_2\ge \dots\ge \lambda_{k-1}\ge \lambda_k)$ of 
		$n$ \emph{distinguished} if the parts divide each other:
		\[
			\lambda_k\mid \lambda_{k-1} \mid \dots \mid \lambda_2\mid\lambda_1.
		\]
		Let $\hat{P}(n)$ denote the set of distinguished partitions.
		For such a partition, let $I_{\lambda}$ be the ideal of $\kk$ generated by the corresponding
		cyclotomic polynomials (of color $y$):
		\[
			I_{\lambda}=(\phi_{\lambda_1},\phi_{\lambda_2},\dots,\phi_{\lambda_{k-1}},\phi_{\lambda_k}).
		\]
		For a given partition $\lambda$ with $k$ parts and with $d$ distinct numbers appearing,
		the \emph{weight} $|\lambda|$ is defined as $2k-d$. In other words, it is the sum between the number
		of parts and the number of repetitions. For example the partition $(5,4,4,2,1)$ of $16$ has weight
		$6$.
		
		Now, define the graded $\kk$-modules $H_k$ as follows. We set $H_1=\kk[0]$, and for $k\ge 2$,
		\[
			H_k:=\bigoplus_{\lambda\in\hat{P}(k)} \kk/I_\lambda[1-|\lambda|].
		\]
		Then we have
		\begin{thm}\label{thm_cohomologyZ}
			The cohomology of ${}^I\!\tilde{C}_n$ is
			\[
				H^\bullet({}^I\!\tilde{C}_n)=\bigoplus_{i=2}^{2n}H_{\lfloor i/2 \rfloor} [i-2],
			\]
			where $\lfloor\cdot\rfloor$ denotes the floor function.
		\end{thm}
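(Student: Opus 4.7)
The plan is to exploit the splitting ${}^I\!\tilde{C}_n=\bigoplus_{m\le n} B_m$ from \eqref{eq_decompositiontildeC}. This is genuinely a decomposition of complexes: the differential preserves the total weight $m=\sum k_j$ of a monomial $\tibe_{k_1}\cdots\tibe_{k_r}$, since each $d(\tibe_k)$ splits $k$ into smaller pieces, and the boundary term $-\alpha_t\tibe_0$ in $d(\tibe_1)$ vanishes in the antispherical setting where $\alpha_t=0$. It therefore suffices to compute each $H^\bullet(B_m)$ and assemble.

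The cohomology classes in each $B_m$ can be produced explicitly using Lemma~\ref{lem_factorsbinomial} (divisibility of the quantum binomials by cyclotomic factors) and Lemma~\ref{lem_gcdcyclo} (coprimality of $\phi_k,\phi_n$ when $k\nmid n$). For the simplest case, the partition $\lambda=(m)$: since $\phi_m\mid\qbc{m}{a}{t}$ for every $0<a<m$, the quotient $d(\tibe_m)/\phi_m$ is a well-defined element of $B_m$ in cohomological degree $2-2m$, is automatically a cycle (as $d^2=0$ and $\phi_m$ is a non-zero-divisor), and its class is annihilated exactly by $\phi_m$, yielding the contribution $\kk/(\phi_m)$. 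For a general distinguished partition $\lambda=(\lambda_1,\dots,\lambda_i)$ with $\lambda_{i}\mid\cdots\mid\lambda_1$, this mechanism iterates: the divisibility pattern of the quantum binomials allows successive divisions by $\phi_{\lambda_2},\dots,\phi_{\lambda_i}$, each at the cost of landing in a further quotient, producing a cohomology class in cohomological degree $|\lambda|+1-2m$ whose annihilator is precisely $I_\lambda$.

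The main obstacle will be proving \emph{exhaustiveness}: that these explicit classes span all of $H^\bullet(B_m)$ and that no additional cohomology arises. I would attack this with a spectral sequence associated to a filtration of $B_m$ by the number of parts of the composition. Thanks to Lemma~\ref{lem_factorsbinomial}, the $E_1$-page splits as a direct sum indexed by the possible greatest common divisors of the parts (the binomial-coefficient entries are forced to vanish unless they match the divisibility pattern), and the coprimality of distinct cyclotomic polynomials from Lemma~\ref{lem_gcdcyclo} forces the subsequent differentials to vanish, so the sequence collapses onto a direct sum of $\kk/I_\lambda$ pieces, one per distinguished partition admissible at the given cohomological degree. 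Finally, summing the resulting $H^\bullet(B_m)$ over $0\le m\le n$ — each distinguished partition $\lambda$ of some $k$ contributing from $B_k$ with shift $[2k-2]$ (the case $i=2k$) and from $B_{k+1}$ with shift $[2k-1]$ when $k+1\le n$ (the case $i=2k+1$), while $H_1$ arises only from the boundary cases $B_0,B_1$ as the "trivial" partition of $1$ gives no contribution for $m\ge 2$ — recovers exactly the formula $\bigoplus_{i=2}^{2n}H_{\lfloor i/2\rfloor}[i-2]$.
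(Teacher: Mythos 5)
Your decomposition ${}^I\!\tilde{C}_n=\bigoplus_{m\le n}B_m$ and the reason the differential preserves the total weight $m$ (namely $\alpha_t=0$ in the antispherical quotient, killing the $k=1$ boundary term) are both correct, and your assembly of the $H^\bullet(B_m)$'s into the stated formula matches the bookkeeping in the paper. However, the core of your proposal — the production of all cohomology classes and the exhaustiveness argument — has two genuine gaps.

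First, the explicit cycle $d(\tibe_m)/\phi_m$ does handle the trivial partition $\lambda=(m)$ correctly (its annihilator is $(\phi_m)$ because the degree $-2m+1$ piece of $B_m$ is the rank-one module on $\tibe_m$). But the claim that "this mechanism iterates" for a general distinguished partition is not an argument; the successive divisions you gesture at do not commute in any obvious way with passing to quotients, and it is not clear what cycle you are actually writing down for, say, $\lambda=(4,2)$. The paper's proof instead makes this structure transparent by introducing variables $\gamma_k^+:=\tibe_k$ and $\gamma_k^-:=\sum_i\qbc{k}{i}{}\tibe_i\tibe_{k-i}/\qn{k}{}$, under which $d(\gamma_k^+)=\qn{k}{}\gamma_k^-$ and $d(\gamma_k^\pm\text{-monomials})$ is computed by the Leibniz rule; this splits $B_m$ into "cubes" $B'_{(k_1,\dots,k_r)}$ indexed by compositions, and after a further rescaling each cube is either contractible or becomes a tensor product $\bigotimes_j(\kk\xrightarrow{\phi_{k_j}}\kk)$ depending on whether the parts form a divisibility chain. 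The key technical work — which your proposal does not address at all — is showing that this change of basis and rescaling can be implemented over $\ZZ[x,y]$ rather than over $\mathbb{Q}(x,y)$.

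Second, the spectral sequence you propose cannot work. In $B_m$ the cohomological degree of a monomial $\tibe_{k_1}\cdots\tibe_{k_r}$ with $\sum k_j=m$ is $-2m+r$, so the "number of parts" is (up to a shift) just the cohomological degree. Filtering by the number of parts is therefore filtering by cohomological degree: the associated graded has zero differential, $E_1$ is the underlying graded module, $d_1$ is the original differential, and the spectral sequence is the complex in disguise. It gives no leverage, and in particular the claimed splitting of the $E_1$-page "by greatest common divisors" and the claimed collapse do not follow from Lemmas~\ref{lem_factorsbinomial} and \ref{lem_gcdcyclo}. To prove exhaustiveness you would need a genuinely different device — the paper's unitriangular change of basis is precisely that device, since it converts the complex into a direct sum of elementary tensor-product pieces whose cohomology is immediate.
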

		\begin{exa}
			Let $n=6$. We first compute the $H_k$ for $k$ up to $6$. Of course, for $k$ prime 
			$H_k$ is simply $\kk/(\phi_k)$. Hence it remains to compute $H_4$ and $H_6$. We have
			\begin{align*}
				&\hat{P}(4)=\{(4),(2,2)\},\\
				&\hat{P}(6)=\{(6),(4,2),(3,3),(2,2,2)\}.
%				&\hat{P}(8)=\{(8),(6,2),(4,4),(4,2,2),(2,2,2,2)\}.
			\end{align*}
			Then
			\begin{align*}
				&H_4=\kk/(\phi_4)\oplus \kk/(\phi_2)[-2],\\
				&H_6=\kk/(\phi_6)\oplus\kk/(\phi_4,\phi_2)[-1]\oplus\kk/(\phi_3)[-2]\oplus\kk/(\phi_2)[-4].
%				&H_8=\kk/(\phi_8)\oplus\kk/(\phi_6,\phi_2)[-1]\oplus\kk/(\phi_4)[-2]\oplus\kk/(\phi_4,\phi_2)[-3]\oplus\kk/(\phi_2)[-6].
			\end{align*}
			So we can patch them together according to Theorem \ref{thm_cohomologyZ} and get Table \ref{tab_coh8}
			where the entry $i$ represents $\kk/(\phi_i)$ and $i,j$ represents $\kk/(\phi_i,\phi_j)$. 
			In the different rows one can see the contributions of the different $H_i$.
			\begin{table}[h]
			\[
				\begin{array}{|r|c|c|c|c|c|c|c|c|c|c|c|} %c|c|c|c|}
					\hline
						%& -14	& -13	& -12	& -11	
															& -10	& -9	& -8	& -7	& -6	& -5	& -4	& -3	& -2	& -1	& 0		\\
					\hline
					H_1		%& 		& 		& 		& 		
															& 		& 		& 		& 		& 		& 		& 		& 		& 		& 		& \kk 	\\
					\hline
					H_1[1]	%&		& 		& 		& 		
															& 		& 	 	& 		& 		& 		& 		& 		&		& 		& \kk	& 		\\
					\hline
					H_2[2]	%&		&		&		&		
															&		& 		& 		&		&		&		&		& 		& 2		&		& 		\\
					\hline
					H_2[3]	%&		&		&		&		
															&		& 		& 		&		&		&		&		& 2		&		&		& 		\\
					\hline
					H_3[4]	%& 		& 		& 		& 		
															& 		& 		& 		& 		& 		& 		& 3		& 		& 		& 		& 	 	\\
					\hline
					H_3[5] %& 		& 		& 		& 		
															& 		& 		& 		& 		& 		& 3		& 		& 		& 		& 		& 	 	\\
					\hline
					H_4[6]	%& 		& 		& 		& 		
															& 		& 		& 		& 		& 4		& 		& 2		& 		& 		& 		& 	 	\\
					\hline
					H_4[7]	%& 		& 		& 		& 		
															& 		& 		& 		& 4		& 		& 2		& 		& 		& 		& 		& 	 	\\
					\hline
					H_5[8]	%& 		& 		& 		& 		
															& 		& 		& 5		& 		& 		& 		& 		& 		& 		& 		& 	 	\\
					\hline
					H_5[9]	%& 		& 		& 		& 		
															& 		& 5		& 		& 		& 		& 		& 		& 		& 		& 		& 	 	\\
					\hline
					H_6[10]	%& 		& 		& 		& 		
															& 6		& 2,4	& 3		& 		& 2		& 		& 		& 		& 		& 		& 	 	\\
					\hline
%					H_6[11]	& 		& 		& 		& 6		& 2,4	& 3		& 		& 2		& 		& 		& 		& 		& 		& 		& 	 	\\
%					\hline
%					H_7[12]	& 		& 		& 7		& 		& 		& 		& 		& 		& 		& 		& 		& 		& 		& 		& 	 	\\
%					\hline
%					H_7[13]	& 		& 7		& 		& 		& 		& 		& 		& 		& 		& 		& 		& 		& 		& 		& 	 	\\
%					\hline
%					H_8[14]	& 8		& 2,6	& 4		& 2,4	& 		& 		& 2		& 		& 		& 		& 		& 		& 		& 		& 	 	\\
%					\hline
				\end{array}
%				\begin{array}{rlllllllll}
%						& H\bullet({}^I\!\tilde{C}_8) \\
%						& H_1	& H_1	& H_2	& H_2	& H_3	& H_4	& H_4	& H_5	& H_5	\\
%					0	& \kk 	&		&		&		&		&		&		&		&		\\
%					-1	& 		& \kk 	&		&		&		&		&		&		&		\\
%					-2	& 		&		&\kk/(\phi_2) &	&		&		&		&		&		 \\
%					-3	& 		&		&		& \kk/(\phi_2) &&		&		&		&		 \\
%					-4	& 		&		&		& 		& \kk/(\phi_3)\oplus\kk/(\phi_2)\\
%					-5	& \kk/(\phi_3)\oplus\kk/(\phi_2)\\
%					-6	& \kk/(\phi_4)\oplus\kk/(\phi_2)\\
%					-7	& \kk/(\phi_4)\oplus\kk/(\phi_2)\\
%					-8	& \kk/(\phi_5)\oplus\kk/(\phi_3)\oplus \kk/(\phi_2)\\
%					-9	& \kk/(\phi_5)\oplus\kk/(\phi_2,\phi_4) \oplus \kk/(\phi_3)\\
%					-10	& \kk/(\phi_6)\oplus \kk/(\phi_2,\phi_4)\\
%					-11	& \kk/(\phi_6)\oplus \kk/(\phi_2,\phi_4)\\
%					-12	& \kk/(\phi_7)\oplus \kk/(\phi_4)\\
%					-13	& \kk/(\phi_7)\oplus \kk/(\phi_2,\phi_6)\\
%					-14	& \kk/(\phi_8)
%				\end{array}
			\]
			\caption{The cohomology of ${}^I\!\tilde{C}_8$.}\label{tab_coh8}
			\end{table}
			In each column one can read off the cohomology at the corresponding degree.
			For example, the cohomology at degree $-9$ is 
			\[
				\kk/(\phi_5)\oplus \kk/(\phi_2,\phi_4)\oplus \kk/(\phi_3).
			\]
		\end{exa}
		Before we pass to the proof of the theorem, let us show how this pattern emerges with an example.
		\begin{exa}
			Let again $n=6$. By \eqref{eq_decompositiontildeC}, we can restrict to the $B_m$'s, so let us consider
			the complex $B_6$, illustrated in Figure \ref{fig_B6}.
			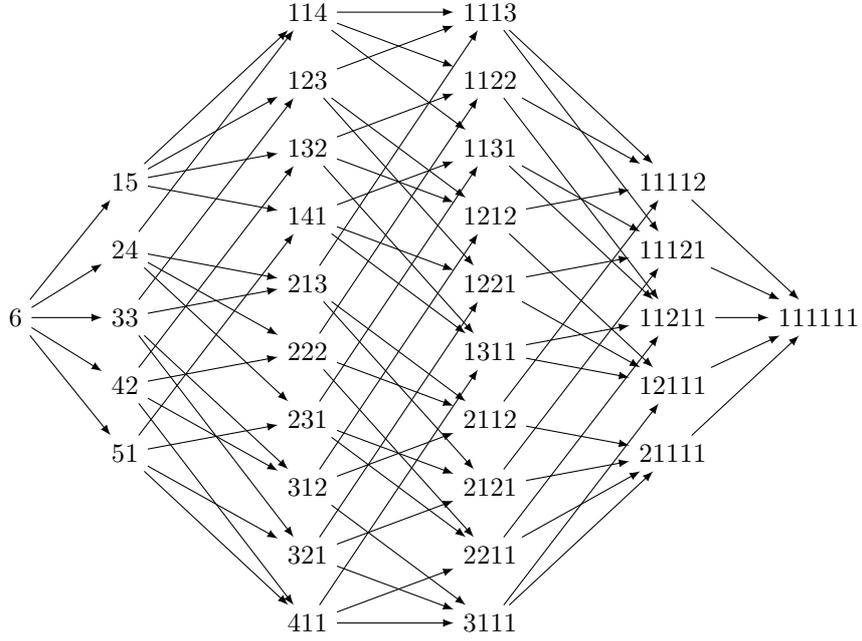
\begin{figure}
				\centering
				\begin{tikzpicture}[x=2.4cm,y=.9cm]
					\node (a6) at (0.4,0) {6};
					\node (a15) at (1,2) {15};\node (a24) at (1,1) {24};\node (a33) at (1,0) {33};\node (a42) at (1,-1) {42};\node (a51) at (1,-2) {51};
					\node (a114) at (2,4.5) {114};\node (a123) at (2,3.5) {123};\node (a132) at (2,2.5) {132};\node (a141) at (2,1.5) {141};\node (a213) at (2,0.5) {213};\node (a222) at (2,-0.5) {222};\node (a231) at (2,-1.5) {231};\node (a312) at (2,-2.5) {312};\node (a321) at (2,-3.5) {321};\node (a411) at (2,-4.5) {411};
					\node (a1113) at (3,4.5) {1113};\node (a1122) at (3,3.5) {1122};\node (a1131) at (3,2.5) {1131};\node (a1212) at (3,1.5) {1212};\node (a1221) at (3,0.5) {1221};\node (a1311) at (3,-0.5) {1311};\node (a2112) at (3,-1.5) {2112};\node (a2121) at (3,-2.5) {2121};\node (a2211) at (3,-3.5) {2211};\node (a3111) at (3,-4.5) {3111};
					\node (a11112) at (4,2) {11112};\node (a11121) at (4,1) {11121};\node (a11211) at (4,0) {11211};\node (a12111) at (4,-1) {12111};\node (a21111) at (4,-2) {21111};
					\node (a111111) at (4.8,0) {111111};
					\draw[-latex] (a6) to (a15);\draw[-latex] (a6) to (a24);\draw[-latex] (a6) to (a33);\draw[-latex] (a6) to (a42);\draw[-latex] (a6) to (a51);
					\draw[-latex] (a15) to (a114);\draw[-latex] (a15) to (a123);\draw[-latex] (a15) to (a132);\draw[-latex] (a15) to (a141);
					\draw[-latex] (a24) to (a114);\draw[-latex] (a24) to (a213);\draw[-latex] (a24) to (a222);\draw[-latex] (a24) to (a231);
					\draw[-latex] (a33) to (a123);\draw[-latex] (a33) to (a213);\draw[-latex] (a33) to (a312);\draw[-latex] (a33) to (a321);
					\draw[-latex] (a42) to (a132);\draw[-latex] (a42) to (a222);\draw[-latex] (a42) to (a312);\draw[-latex] (a42) to (a411);
					\draw[-latex] (a51) to (a141);\draw[-latex] (a51) to (a231);\draw[-latex] (a51) to (a321);\draw[-latex] (a51) to (a411);
					\draw[-latex] (a114) to (a1113);\draw[-latex] (a114) to (a1122);\draw[-latex] (a114) to (a1131);
					\draw[-latex] (a123) to (a1113);\draw[-latex] (a123) to (a1212);\draw[-latex] (a123) to (a1221);
					\draw[-latex] (a132) to (a1122);\draw[-latex] (a132) to (a1212);\draw[-latex] (a132) to (a1311);
					\draw[-latex] (a141) to (a1131);\draw[-latex] (a141) to (a1221);\draw[-latex] (a141) to (a1311);
					\draw[-latex] (a213) to (a1113);\draw[-latex] (a213) to (a2112);\draw[-latex] (a213) to (a2121);
					\draw[-latex] (a222) to (a1122);\draw[-latex] (a222) to (a2112);\draw[-latex] (a222) to (a2211);
					\draw[-latex] (a231) to (a1131);\draw[-latex] (a231) to (a2121);\draw[-latex] (a231) to (a2211);
					\draw[-latex] (a312) to (a1212);\draw[-latex] (a312) to (a2112);\draw[-latex] (a312) to (a3111);
					\draw[-latex] (a321) to (a1221);\draw[-latex] (a321) to (a2121);\draw[-latex] (a321) to (a3111);
					\draw[-latex] (a411) to (a1311);\draw[-latex] (a411) to (a2211);\draw[-latex] (a411) to (a3111);
					\draw[-latex] (a1113) to (a11112);\draw[-latex] (a1113) to (a11121);
					\draw[-latex] (a1122) to (a11112);\draw[-latex] (a1122) to (a11211);
					\draw[-latex] (a1131) to (a11121);\draw[-latex] (a1131) to (a11211);
					\draw[-latex] (a1212) to (a11112);\draw[-latex] (a1212) to (a12111);
					\draw[-latex] (a1221) to (a11121);\draw[-latex] (a1221) to (a12111);
					\draw[-latex] (a1311) to (a11211);\draw[-latex] (a1311) to (a12111);
					\draw[-latex] (a2112) to (a11112);\draw[-latex] (a2112) to (a21111);
					\draw[-latex] (a2121) to (a11121);\draw[-latex] (a2121) to (a21111);
					\draw[-latex] (a2211) to (a11211);\draw[-latex] (a2211) to (a21111);
					\draw[-latex] (a3111) to (a12111);\draw[-latex] (a3111) to (a21111);
					\draw[-latex] (a11112) to (a111111);\draw[-latex] (a11121) to (a111111);\draw[-latex] (a11211) to (a111111);\draw[-latex] (a12111) to (a111111);\draw[-latex] (a21111) to (a111111);
				\end{tikzpicture}
				\caption{The complex $B_6$.}\label{fig_B6}
			\end{figure}
			Each node is labeled by the composition of 6 representing a basis element. For instance, 
			$213$ represents $\tibe_2\tibe_1\tibe_3$.
			
			We want to find a convenient change of basis in order to split the complex into smaller pieces.
			The basis of $B_6$ is given in terms of the variables $\tibe_k$ and the differential is 
			defined multiplicatively from its values over the $\tibe_k$. Hence the idea is to define 
			new variables such that the formula for the differential gets simpler.
			In fact one can find variables $\gamma_1$, and $\gamma_k^{\pm}$, for $k=2,\dots,6$
			such that
			\begin{align}\label{eq_diffongamma}
				&d(\gamma_1)=0,&	&d(\gamma_k^+)=\qn{k}{}\gamma_k^-,&	&d(\gamma_k^-)=0.
			\end{align}
			This can be done as follows. Let $\gamma_1:=\tibe_1$ and 
			$\gamma_k^+:=\tilde{\beta}_k$ for all $k=2,3,4,5,6$. Then let
			\begin{align*}
				&\gamma_2^-=\tibe_1\tibe_1,\\
				&\gamma_3^-=\tibe_1\tibe_2+\tibe_2\tibe_1,\\
				&\gamma_4^-=\tibe_1\tibe_3+\frac{\qbc{4}{2}{}}{\qn{6}{}}\tibe_2\tibe_2+\tibe_3\tibe_1,\\
				&\gamma_5^-=\tibe_1\tibe_4+\frac{\qbc{5}{2}{}}{\qn{5}{}}\tibe_2\tibe_3+\frac{\qbc{5}{3}{}}{\qn{5}{}}\tibe_3\tibe_2 +\tibe_4\tibe_1,\\
				&\gamma_6^-=\tibe_1\tibe_5+\frac{\qbc{6}{2}{}}{\qn{6}{}}\tibe_2\tibe_4+\frac{\qbc{6}{3}{}}{\qn{6}{}}\tibe_3\tibe_3+\frac{\qbc{6}{4}{}}{\qn{6}{}}\tibe_4\tibe_2+\tibe_1\tibe_5. 
			\end{align*}
			By the values of the differential on the $\tibe_k$'s, one sees that we actually get \eqref{eq_diffongamma}.
			Now we want to find a basis of $B_6$ in terms of the new variables. Consider the set:
			\begin{align*}
				&\gamma_6^{\pm},							&\text{(2 elements)}\\
				&\gamma_4^{\pm}\gamma_2^{\pm},				&\text{(4 elements)}\\
				&\gamma_3^\pm \gamma_3^\pm,					&\text{(4 elements)}\\
				&\gamma_2^\pm \gamma_4^\pm,					&\text{(4 elements)}\\
				&\gamma_2^\pm \gamma_2^\pm \gamma_2^\pm,	&\text{(8 elements)}\\
				&\gamma_1\gamma_5^\pm,						&\text{(2 elements)}\\
				&\gamma_1\gamma_3^\pm\gamma_2^\pm,			&\text{(4 elements)}\\
				&\gamma_1\gamma_2^\pm \gamma_3^\pm.			&\text{(4 elements)}\\
			\end{align*}
			By the formulas for the new variables one sees that, choosing an appropriate order, 
			the new basis is uni-triangular with respect to the old one. 
			
			With respect to this basis, the complex becomes the one illustrated in Figure \ref{fig_B6seconda}.
			\begin{figure}
				\centering
				\begin{tikzpicture}[x=2.5cm,scale=0.9]
					\node (a6) at (0,0) {$\gamma_6^+$};\node (b6) at (1,0) {$\gamma_6^-$};
					\node (a42) at (1,-2) {$\gamma_4^+\gamma_2^+$};\node (a4b2) at (2,-1) {$\gamma_4^+\gamma_2^-$};\node (b4a2) at (2,-3) {$\gamma_4^-\gamma_2^+$};\node (b42) at (3,-2) {$\gamma_4^-\gamma_2^-$};
					\node (a33) at (1,-5) {$\gamma_3^+\gamma_3^+$};\node (a3b3) at (2,-4) {$\gamma_3^+\gamma_3^-$};\node (b3a3) at (2,-6) {$\gamma_3^-\gamma_3^+$};\node (b33) at (3,-5) {$\gamma_3^-\gamma_3^-$};
					\node (a24) at (1,-8) {$\gamma_2^+\gamma_4^+$};\node (a2b4) at (2,-7) {$\gamma_2^+\gamma_4^-$};\node (b2a4) at (2,-9) {$\gamma_2^-\gamma_4^+$};\node (b24) at (3,-8) {$\gamma_2^-\gamma_4^-$};
					\node (a222) at (2,-12) {$\gamma_2^+\gamma_2^+\gamma_2^+$};\node (a22b2) at (3,-10) {$\gamma_2^+\gamma_2^+\gamma_2^-$};\node (a2b2a2) at (3,-12) {$\gamma_2^+\gamma_2^-\gamma_2^+$};\node (b2a22) at (3,-14) {$\gamma_2^-\gamma_2^+\gamma_2^+$};\node (a2b22) at (4,-10) {$\gamma_2^+\gamma_2^-\gamma_2^-$};\node (b2a2b2) at (4,-12) {$\gamma_2^-\gamma_2^+\gamma_2^-$};\node (b22a2) at (4,-14) {$\gamma_2^-\gamma_2^-\gamma_2^+$};\node (b222) at (5,-12) {$\gamma_2^-\gamma_2^-\gamma_2^-$};
					\node (a5) at (1,-15) {$\gamma_5^+\tibe_1$};\node (b5) at (2,-15) {$\gamma_5^-\tibe_1$};
					\node (a23) at (2,-17) {$\gamma_2^+\gamma_3^+\tibe_1$};\node (a2b3) at (3,-16) {$\gamma_2^+\gamma_3^-\tibe_1$};\node (b2a3) at (3,-18) {$\gamma_2^-\gamma_3^+\tibe_1$};\node (b23) at (4,-17) {$\gamma_2^-\gamma_3^-\tibe_1$};
					\node (a32) at (2,-20) {$\gamma_3^+\gamma_2^+\tibe_1$};\node (a3b2) at (3,-19) {$\gamma_3^+\gamma_2^-\tibe_1$};\node (b3a2) at (3,-21) {$\gamma_3^-\gamma_2^+\tibe_1$};\node (b32) at (4,-20) {$\gamma_3^-\gamma_2^-\tibe_1$};
					\draw[violet] (0.5,0) ellipse (2cm and 1cm);
					\draw[violet] (2,-2) ellipse (3.1cm and 1.4cm);
					\draw[violet] (2,-5) ellipse (3.1cm and 1.4cm);
					\draw[violet] (3.5,-12) ellipse (5cm and 2.7cm);
					\draw[orange] (1.5,-15) ellipse (2cm and 1cm);
					\draw[-latex] (a6) to node[above] {$[6]$} node[below,red] {$\phi_6$} (b6);
					\draw[-latex] (a42) to node[above] {$[2]$} node[below,near end,red] {$\phi_2$} (a4b2);\draw[-latex] (a42) to node[below] {$[4]$} node[above,red,near end] {$\phi_4$} (b4a2);\draw[-latex] (a4b2) to node[above] {$[2]$} node[below,near start,red] {$\phi_2$} (b42);\draw[-latex] (b4a2) to node[below] {$[4]$} node[above,red,near start] {$\phi_4$} (b42);
					\draw[-latex] (a33) to node[above] {$[3]$} node[below,near end,red] {$1$} (a3b3);\draw[-latex] (a33) to node[below] {$[3]$} node[above,red,near end] {$1$} (b3a3);\draw[-latex] (a3b3) to node[above] {$[3]$} node[below,near start,red] {$\phi_3$} (b33);\draw[-latex] (b3a3) to node[below] {$[3]$} node[above,red,near start] {$\phi_3$} (b33);
					\draw[-latex] (a24) to node[above] {$[4]$} node[below,near end,red] {$\phi_4$} (a2b4);\draw[-latex] (a24) to node[below] {$[2]$} node[above,red,near end] {$1$} (b2a4);\draw[-latex] (a2b4) to node[above] {$[2]$} node[below,near start,red] {$1$} (b24);\draw[-latex] (b2a4) to node[below] {$[4]$} node[above,red,near start] {$\phi_4$} (b24);
					\draw[-latex] (a222) to node[above] {$[2]$} node[below,red,near end] {$\phi_2$} (a22b2);\draw[-latex] (a222) to node[above] {$[2]$} node[below,red] {$1$} (a2b2a2);\draw[-latex] (a222) to node[below] {$[2]$} node[above,red,near end] {$\phi_2$} (b2a22);
						\draw[-latex] (a22b2) to node[above] {$[2]$} node[below,red] {$1$} (a2b22);\draw[-latex] (a22b2) to node[above,pos=0.9] {$[2]$} node[below,near end,red] {$1$} (b2a2b2);\draw[-latex] (a2b2a2) to node[above,pos=0.1] {$[2]$} node[below,near start,red] {$\phi_2$} (a2b22);\draw[-latex] (a2b2a2) to node[below,pos=0.1] {$[2]$} node[above,near start,red] {$\phi_2$} (b22a2);\draw[-latex] (b2a22) to node[below,pos=0.9] {$[2]$} node[above,near end,red] {$1$} (b2a2b2);\draw[-latex] (b2a22) to node[below] {$[2]$} node[above,red] {$1$} (b22a2);
						\draw[-latex] (a2b22) to node[above] {$[2]$} node[below,red,near start] {$\phi_2$} (b222);\draw[-latex] (b2a2b2) to node[above] {$[2]$} node[below,red] {$\phi_2$} (b222);\draw[-latex] (b22a2) to node[below] {$[2]$} node[above,red,near start] {$\phi_2$} (b222);
					\draw[-latex] (a5) to node[above] {$[5]$} node[below,red] {$\phi_5$} (b5);
					\draw[-latex] (a32) to node[above] {$[2]$} node[below,near end,red] {$\phi_2$} (a3b2);\draw[-latex] (a32) to node[below] {$[3]$} node[above,near end,red] {$\phi_3$} (b3a2);\draw[-latex] (a3b2) to node[above] {$[2]$} node[below,near start,red] {$\phi_2$} (b32);\draw[-latex] (b3a2) to node[below] {$[3]$} node[above,near start,red] {$\phi_3$} (b32);
					\draw[-latex] (a23) to node[above] {$[3]$} node[below,near end,red] {$\phi_3$} (a2b3);\draw[-latex] (a23) to node[below] {$[2]$} node[above,near end,red] {$\phi_2$} (b2a3);\draw[-latex] (a2b3) to node[above] {$[2]$} node[below,near start,red] {$\phi_2$} (b23);\draw[-latex] (b2a3) to node[below] {$[3]$} node[above,near start,red] {$\phi_3$} (b23);
				\end{tikzpicture}
				\caption{The complex $B_6$ after change(s) of basis.}\label{fig_B6seconda}
			\end{figure}
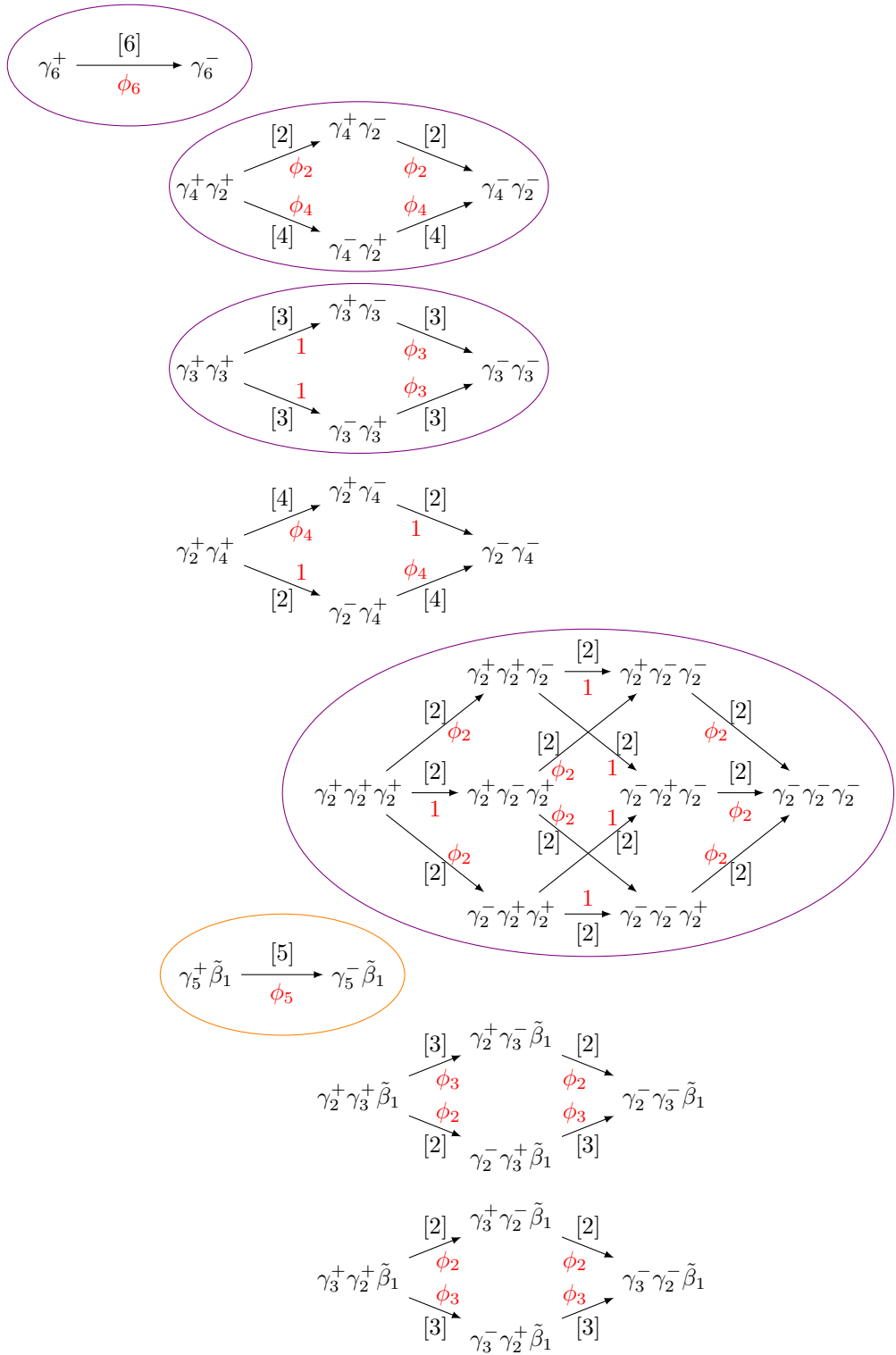
			However the change of basis that we defined is defined over $\mathbb{Q}(x,y)$ 
			but not over $\ZZ[x,y]$. This can be restored by rescaling the new basis elements with
			appropriate factors. More precisely, one could define a change of basis over $\ZZ[x,y]$
			which has the same overall effect on the complex as our change of basis, after rescaling.
			In this case, we will make the following replacements.
			\begin{align*}
				&\gamma_6^-\rightsquigarrow \gamma_6^-\cdot \phi_2\phi_3,&					&\gamma_4^-\gamma_2^+\rightsquigarrow \gamma_4^-\gamma_2^+\cdot\phi_2,&							&\gamma_4^-\gamma_2^-\rightsquigarrow \gamma_4^-\gamma_2^-\cdot\phi_2, \\
				&\gamma_3^+\gamma_3^+\rightsquigarrow \frac{\gamma_3^+\gamma_3^+}{\phi_3},&	&\gamma_2^+\gamma_2^+\gamma_2^+\rightsquigarrow \frac{\gamma_2^+\gamma_2^+\gamma_2^+}{\phi_2},&	&\gamma_2^-\gamma_4^-\rightsquigarrow \gamma_2^-\gamma_4^-\cdot\phi_2, \\
				&\gamma_2^+\gamma_4^+\rightsquigarrow \frac{\gamma_2^+\gamma_4^+}{\phi_2},&	&&					 																			&\gamma_2^+\gamma_2^+\gamma_2^-\rightsquigarrow \frac{\gamma_2^+\gamma_2^+\gamma_2^-}{\phi_2}, \\
				&&																			&&																								&\gamma_2^-\gamma_2^+\gamma_2^+\rightsquigarrow \frac{\gamma_2^-\gamma_2^+\gamma_2^+}{\phi_2}. 	
			\end{align*}
			This has the effect of changing the morphisms in the complex (the new ones are written in red
			in Figure \ref{fig_B6seconda}).
			One can now see that, by Lemma \ref{lem_gcdcyclo}, the only pieces giving non trivial cohomology are the circled ones.
			Those circled in violet give $H_6[10]$ and the one in orange give $H_5[9]$. 
			These are precisely	the last two rows of Table \ref{tab_coh8}.
			
			By applying the same argument to the other pieces $B_i$ for $i=0,\dots, 5$, one finds that
			each $B_i$ with $i\ge 3$ contributes with $H_i[2i-2]\oplus H_{i-1}[2i-3]$, whereas 
			$B_2$ gives $H_2[2]$, $B_1$ gives $H_1[1]$ and $B_0$ gives $H_1$.
		\end{exa}
		Let us now prove the theorem in general.
		\begin{proof}[Proof of Theorem \ref{thm_cohomologyZ}]
			By \eqref{eq_decompositiontildeC}, we can restrict to $B_n$.
			Let us consider the following variables
			\begin{align*}
				&\gamma_1:=\tibe_1,&
				&\gamma_k^+:=\tilde{\beta}_k, &
				&\gamma_k^-:=\sum_{i=1}^{k-1} \frac{\qbc{k}{i}{y}}{\qn{k}{y}}\tilde{\beta}_i\tilde{\beta}_{k-i}.
			\end{align*}
			Then the differential map satisfies \eqref{eq_diffongamma}. Now consider the set
			\[
				\{\gamma_{k_1}^\pm\dots\gamma_{k_r}^{\pm} \mid k_1+\dots+k_r=n\}\cup
				\{\gamma_{k_1}^\pm\dots\gamma_{k_r}^{\pm}\gamma_1 \mid k_1+\dots+k_r=n-1\}.
			\]
			As in the example, with an appropriate ordering, this set is uni-triangular with respect to the 
			original basis, then it defines a new basis.
						
			By \eqref{eq_diffongamma}, fixed a composition $(k_1,\dots, k_r)$ of $n$ with each $k_i\ge 2$,
			the span $B'_{(k_1,\dots,k_r)}$ of all the basis elements 
			\begin{equation}\label{eq_monomialgamma}
				\gamma_{k_1}^{\pm}\dots\gamma_{k_r}^{\pm}
			\end{equation}
			is a summand of $B_n$. In the same way, if $(k_1,\dots, k_r)$ is a composition of $n-1$ 
			the span $B''_{(k_1,\dots,k_r)}$ of all the basis elements
			\[
				\gamma_{k_1}^{\pm}\dots\gamma_{k_r}^{\pm}\gamma_1			
			\]
			is another summand of $B_n$. The complex $B_n$ is the direct sum of all the summand obtained in this way.

			We now have to rescale our basis. Consider a monomial of the form \eqref{eq_monomialgamma}.
			The \emph{$d$-weight} is the maximal number of disjoint submonomials of the form
			$\gamma_d^+\gamma_{kd}^{\pm}$.
			For example $\gamma_2^+\gamma_2^+\gamma_2^+\gamma_4^-$ has $2$-weight equal 2 (it contains $\gamma_2^+\gamma_2^+$ and $\gamma_2^+\gamma_4^-$ 
			and they do not intersect),
			as well as $\gamma_2^+\gamma_2^+\gamma_2^+\gamma_2^+\gamma_4^-$. Instead $\gamma_2^-\gamma_4^+\gamma_2^+$ has 2-weight zero. 
			Given a basis element of the form \eqref{eq_monomialgamma}. For each $d$, 
			let $w_d$ be its $d$-weight and let $r_d$ be the number of occurrences of $\gamma_d^-$. 
			Then we rescale it by the factor
			\[
				\frac{\prod_d \qn{d}{}^{r_d}}{\prod_d \phi_d^{w_d+r_d}}.
			\]
			At this point, consider $B'_{(k_1,\dots,k_r)}$ with the new basis. 
			The following facts are a bit technical but not difficult to prove and are left to the reader:
			\begin{itemize}
				\item if $(k_1,\dots,k_r)$ is not decreasing, then $B'_{(k_1,\dots,k_r)}$ is contractible.
				\item if it is decreasing, but not strictly, say $k_i=k_{i+1}$, then $B'_{(k_1,\dots,k_r)}$ is 
					homotopy equivalent to $B'_({k_1,\dots,\hat{k_i},\dots,k_r)}$.
				\item finally, if it is strictly decreasing, then $B'_{(k_1,\dots,k_r)}$ is isomorphic to
					\[
						(\kk\overset{\phi_{k_1}}{\rightarrow}\kk)\otimes\dots\otimes(\kk\overset{\phi_{k_r}}{\rightarrow}\kk)
					\]
					which gives cohomology $\kk/(\phi_{k_1},\dots,\phi_{k_r})$.
			\end{itemize}
			The same holds for the summands $B''$.

			It remains to prove that it is possible to obtain the same complex with a change of basis defined
			over $\ZZ[x,y]$.
			
			For simplicity we treat the lowest cohomological degree. The argument for the higher ones is essentially the same.
			Here the complex $B_n$ looks as follows:
			\[
				\begin{tikzcd}
					\tibe_n \ar[r,"\alpha"]		& \langle \tibe_1\tibe_{n-1},\tibe_2\tibe_{n-2},\dots,\tibe_{n-1}\tibe_1\rangle \ar[r]	& \dots
				\end{tikzcd}
			\]
			where
			\begin{align*}
				&\alpha=	\begin{pmatrix}
							\qbc{n}{1}{} \\ \qbc{n}{2}{} \\ \vdots \\ \qbc{n}{n-1}{}
						\end{pmatrix}.
			\end{align*}
			Let $d_1<d_2<\dots< d_k$ be the non trivial divisors of $n$ (i.e.\ different from $1$ and $n$).
			By Lemma \ref{lem_factorsbinomial}, the binomial coefficient $\qbc{n}{d_i}{}$ is not divisible by $\phi_{d_i}$.
			Consider
			\begin{align*}
				& a_1=\qn{n}{},&					 							& b_1=\qbc{n}{d_1}{},& 	& z_1=\frac{\qn{n}{}}{\phi_{d_1}}, \\
				& a_2=\frac{\qn{n}{}}{\phi_{d_1}},&  							& b_2=\qbc{n}{d_2}{},&	& z_2=\frac{\qn{n}{}}{\phi_{d_1}\phi_{d_2}}, \\
				& \dots,&														& \dots,&				& \dots,\\
				& a_k=\frac{\qn{n}{}}{\phi_{d_1}\phi_{d_2}\dots\phi_{d_{k-1}}},&& b_k=\qbc{n}{d_k}{},&	& z_k=\frac{\qn{n}{}}{\phi_{d_1}\phi_{d_2}\dots \phi_{d_k}}=\phi_n.
			\end{align*}
			By Lemma \ref{lem_gcdcyclo} we can find, for $i=1,\dots, k$, elements $x_i,y_i\in \kk$
			such that
			\[
				a_ix_i+b_iy_i=z_i,\quad \forall i=1,\dots,k.
			\]
			Now consider the matrices, with coefficients in $\kk$,
			\[
				\begin{pmatrix}
					x_i 				&	&  		& 	&y_i				&	& 		& \\
										& 1	&  		&	&					&	& 		&		\\
										&	&\ddots	& 	&					&	&		&	\\
										&	&		& 1 &					&	& 		&\\
					-\frac{b_i}{z_i}	&	&		&	& \frac{a_i}{z_i}	&	& 		&\\
										&	&		&	&					& 1 & 		&\\
										&	&		&	&					&	&\ddots &\\
										&	&		&	&					&	&		& 1
				\end{pmatrix}.
			\]
			Each of them decomposes, in $\mathbb{Q}(x,y)$, as
			\[	
				\underbrace{%
				\begin{pmatrix}
					1	 				&  		& \frac{y_iz_i}{a_i}&		 \\
										&\ddots	&					&			\\
										&		& 1					& 		\\
										&		&					&\ddots \\
				\end{pmatrix}}_{A_i}
				\underbrace{%
				\begin{pmatrix}
					\frac{z_i}{a_i}		&  		&					& 		 \\
										&\ddots	&					&			\\
										&		& \frac{a_i}{z_i}	& 		\\
										&		&					&\ddots \\
				\end{pmatrix}}_{B_i}
				\underbrace{%
				\begin{pmatrix}
					1	 				&  		&					& 		 \\
										&\ddots	&					&			\\
					-\frac{b_i}{a_i}	&		& 	1				& 		\\
										&		&					&\ddots \\
				\end{pmatrix}}_{C_i}.
			\]
			Now, composing all these matrices we obtain
			\[
				M=A_kB_kC_k\dots A_2B_2C_2A_1B_1C_1.
			\]
			One can check that 
			\[
				M\alpha=\begin{pmatrix}
					\phi_n \\ * \\ \vdots \\ *
				\end{pmatrix}
			\]
			with zeros in the $d_i$-th rows. Now, by Lemma \ref{lem_factorsbinomial}, we can eliminate all the remaining terms 
			via a matrix $C$ with coefficients in $\kk$.
			Now, the matrices $A_i$ can be ``shifted'' to the left, up to some changes that do not involve the 
			first column. 
			More precisely, the product $C_{k+1}M$ becomes
			\[
				A'C_{k+1}B_kC_k\dots B_1C_1,
			\]
			where $A'$ is an invertible matrix of the form
			\[
				A'=\begin{pmatrix}
					1	& *		&\dots 	&*\\
						&\vdots &\ddots &  \vdots\\
						& *		&\dots 	&*
				\end{pmatrix}.
			\]
			On the other hand, if we shift the $C_i$'s to the right we get
			\[
				B_k\dots B_1C_{k+1}'\dots C_1'.
			\]
			Now, the product $B_k\dots B_1$ gives precisely our rescaling, whereas the product 
			$C_{k+1}'\dots C_1'$ gives the first change of basis.
			In particular,
			\[
				C_{k+1}B_kC_k\dots B_1C_k\alpha=
				\begin{pmatrix}
					\phi_n \\ 0 \\ \vdots \\ 0
				\end{pmatrix}.
			\]
			Hence the additional matrix $A'$ has no effect on the above vector.
%			We can decompose the above change of basis into elementary ones, each involving two vectors $v_1$, $v_2$, expressed by the matrix
%			\[
%				\begin{pmatrix}
%					\frac{d}{a}	&	0 \\
%					0			&	\frac{a}{d}
%				\end{pmatrix}
%				\begin{pmatrix}
%					1			&	0 \\
%					-\frac{b}{a}&	1
%				\end{pmatrix}
%			\]
%			where $d$ divides $a$ and $b$, such that there exist $x,y\in \kk$ giving a B\'ezout's identity:
%			\[
%				ax+by=d.
%			\]
%			Then notice that if we multiply on the left by
%			\begin{equation}\label{eq_addmat}
%				\begin{pmatrix}
%					1	& \frac{yd}{a}\\
%					0	& 1
%				\end{pmatrix}			
%			\end{equation}
%			we obtain 
%			\[
%				\begin{pmatrix}
%					x	& y \\
%					-\frac{b}{d}	& \frac{a}{d}
%				\end{pmatrix}.
%			\]
%			which has coefficients in $\kk$. Now, all the additional matrices \eqref{eq_addmat} can be pushed to the left, when composing these elementary
%			changes, and in the end they do not affect the form of the differential.
		\end{proof}		
	\printbibliography
\end{document}